\documentclass{article}

\usepackage{amsmath,amsthm,amssymb,enumerate,epsfig,color,graphicx,epstopdf,tikz,rotating}

\usetikzlibrary{positioning,shapes}

\usepackage[all]{xy}

\usepackage{array}
\newcolumntype{C}[1]{>{\centering\arraybackslash$}p{#1}<{$}}

\usepackage{ifdraft}
\usepackage[colorlinks=true,linkcolor=blue,draft=false]{hyperref}
\usepackage{aliascnt}

\def\<{\langle}
\def\>{\rangle}

\setlength{\parskip}{1ex}
\setlength{\parindent}{0mm}

\theoremstyle{plain}
\newtheorem{theorem}{Theorem}[section]

\newaliascnt{lemma}{theorem}
\newtheorem{lemma}[lemma]{Lemma}
\aliascntresetthe{lemma}

\newaliascnt{proposition}{theorem}
\newtheorem{proposition}[proposition]{Proposition}
\aliascntresetthe{proposition}

\newaliascnt{corollary}{theorem}
\newtheorem{corollary}[corollary]{Corollary}
\aliascntresetthe{corollary}

\newaliascnt{conjecture}{theorem}

\aliascntresetthe{conjecture}

\theoremstyle{remark}

\newaliascnt{claim}{theorem}

\aliascntresetthe{claim}

\newtheorem*{claim*}{Claim}

\theoremstyle{definition}

\newaliascnt{definition}{theorem}
\newtheorem{definition}[definition]{Definition}
\aliascntresetthe{definition}

\newaliascnt{example}{theorem}

\aliascntresetthe{example}

\newaliascnt{notation}{theorem}

\aliascntresetthe{notation}

\textwidth=16cm
\hoffset=-1.5cm
\textheight=22cm
\voffset=-1cm


\title{On the growth of Artin--Tits monoids and the partial theta function}
\author{Ram\'on Flores, Juan Gonz\'alez-Meneses\footnote{Both authors partially supported by Spanish Project MTM2016-76453-C2-1-P and FEDER.}}
\date{August, 2018}

\begin{document}

\maketitle


\begin{abstract}
We present a new procedure to determine the growth function of a homogeneous Garside monoid, with respect to the finite generating set formed by the atoms. In particular, we present a formula for the growth function of each Artin--Tits monoid of spherical type (hence of each braid monoid) with res\-pect to the standard generators, as the inverse of the determinant of a very simple matrix.

Using this approach, we show that the exponential growth rates of the Artin--Tits monoids of type $A_n$ (positive braid monoids) tend to $3.233636\ldots$ as $n$ tends to infinity. This number is well-known, as it is the growth rate of the coefficients of the only solution $x_0(y)=-(1+y+2y^2+4y^3+9y^4+\cdots)$ to the classical partial theta function.

We also describe the sequence $1,1,2,4,9,\ldots$ formed by the coefficients of $-x_0(y)$, by showing that its $k$th term (the coefficient of $y^k$) is equal to the number of braids of length $k$, in the positive braid monoid $A_{\infty}$ on an infinite number of strands, whose maximal lexicographic representative starts with the first generator $a_1$. This is an unexpected connection between the partial theta function and the theory of braids.
\end{abstract}

\section{Introduction}

A \emph{Garside monoid} is a cancellative monoid where greatest common divisors and least common multiples exist and some finiteness conditions are satisfied. The initial properties of Garside monoids were discovered by F. Garside in his famous paper \cite{Gar69}, which was concentrated in the particular case of braids. Garside theory was definitely established in the work of Paris and Dehornoy~\cite{Dehornoy-Paris, Dehornoy}, where in particular the notion of Garside group was defined as the group of fractions of a Garside monoid. Since then, Garside theory has become a remarkable topic in Combinatorial Group Theory, as very important families of groups and monoids are Garside (Artin--Tits groups of spherical type, torus link groups, groups related to solutions of quantum Yang-Baxter equations), and some well-known conjectures have been established with the help of Garside theory, like the $K(\pi,1)$ conjecture for finite complex reflection arrangements~\cite{Bessis}. We recall the basics we need of Garside theory at the beginning of \autoref{S:Garside}.


The main objects of study of this paper are the \emph{growth functions} in Garside monoids, particularly in Artin--Tits monoids of spherical type, with respect to the generating set formed by the atoms. Our initial goal was to give a new procedure to compute the growth function of a homogeneous Garside monoid of finite type, which should yield a simple formula in well-known particular cases.

It was already shown by Deligne~\cite{Deligne} that the growth function $g_M(t)$ of a monoid $M$ is the inverse of a polynomial (called the {\it M\"obius polynomial} of $M$) if $M$ is an Artin--Tits monoid of spherical type, and his proof can be generalized to other Garside monoids (\autoref{C:Deligne}). Building on this, we develope in \autoref{S:Counting} a new and quite straightforward way of counting the number of elements of given length in a Garside monoid of finite type, or in other words, the coefficients of the growth function of the monoid, relative to the atoms.

A major consequence of this new way of counting is the following statement, which includes precise descriptions of the growth functions for all the Artin--Tits monoids of spherical type; these descriptions appear in the text of the paper as \autoref{T:typeA}, \autoref{T:typeB} and \autoref{T:typeD}, and we present them here in a unified way. Observe that, throughout the paper, we will assume that the number ${k\choose 2}=\frac{k(k-1)}{2}$ makes sense for every integer $k\geq 0$.

\textbf{Theorem.} Let $M$ an Artin--Tits monoid of spherical type. Then there exists a square matrix $\mathcal M$ such that $$g_M(t)=|\mathcal M|^{-1}.$$ Moreover:

\begin{itemize}

\item If $M$ is of type $A_n$, $\mathcal M$ is denoted $\mathcal M^{\mathcal A}_n$, has order $n+1$, and its entry $(i,j)$ equals $t^{j-i+1\choose 2}$ whenever $j-i+1\geq 0$, and 0 otherwise.

\item If $M$ is of type $B_n$, $\mathcal M$ is denoted $\mathcal M^{\mathcal B}_n$, has order $n+1$, its entry $(i,n+1)$ equals $t^{(n-i+1)^2}$ for all $i$, and its entry $(i,j)$ for $j\leq n$ equals $t^{j-i+1\choose 2}$ whenever $j-i+1\geq 0$, and 0 otherwise.

\item If $M$ is of type $D_n$, $\mathcal M$ is denoted $\mathcal M^{\mathcal D}_n$, has order $n$, its entry $(i,n)$ equals $2t^{n-i+1\choose 2}-t^{(n-i+1)(n-i)}$ for all $i$, and its entry $(i,j)$ for $j<n$ equals $t^{j-i+1\choose 2}$ whenever $j-i+1\geq 0$, and 0 otherwise.

\end{itemize}

The following are concrete examples of the above result, for $n=4$ or $n=5$:
$$
g_{A_4}(t)=\left|\begin{array}{ccccc} 1 & t & t^3 & t^6 & t^{10} \\ 1 & 1 & t & t^3 & t^6 \\ 0 & 1 & 1 & t & t^3 \\ 0 & 0 & 1 & 1 & t \\ 0 & 0 & 0 & 1 & 1\end{array}\right|^{-1} \qquad
g_{B_4}(t)=\left|\begin{array}{ccccc} 1 & t & t^3 & t^6 & t^{16} \\ 1 & 1 & t & t^3 & t^9 \\ 0 & 1 & 1 & t & t^4 \\ 0 & 0 & 1 & 1 & t \\ 0 & 0 & 0 & 1 & 1 \end{array}\right|^{-1}
$$
$$
g_{D_5}(t)=\left|\begin{array}{ccccc} 1 & t & t^3 & t^6 & 2t^{10}-t^{20} \\ 1 & 1 & t & t^3 & 2t^6-t^{12} \\ 0 & 1 & 1 & t & 2t^3-t^6 \\ 0 & 0 & 1 & 1 & 2t-t^2  \\ 0 & 0 & 0 & 1 & 1 \end{array}\right|^{-1}
$$

In~\cite{Bronfman}, Bronfman gave a recursive formula to compute (in an efficient way) the M\"obius polynomial of $M$, when $M=A_n$. Using the previous information, we are able to express the same M\"obius polynomial as the determinant of a matrix, recovering in particular (\autoref{T:Bronfman}) Bronfman's recursive formula as the expansion of this determinant along the first row.

Once we obtained these explicit formulae for the growth functions of the Artin--Tits monoids, we drove our attention to a more ambitious goal. Recall that given a monoid $M$, if we denote by $M_k$ the set of elements in $M$ of length $k$, the exponential growth rate of $M$ is defined as follows
$$
   \rho_M=\lim_{k\to \infty}{\sqrt[k]{|M_k|}}.
$$
In the case of an Artin--Tits monoid $M$ of spherical type, since $g_M(t)$ is the inverse of a polynomial, the number $\rho_M$ is inverse of the smallest root of the M\"obius polynomial, which is unique, real and positive, as shown by Jug\'e~\cite{Juge}. It is also known that, in the case of Artin--Tits monoids of type $A_n$ (the braid monoids), the limit of their growth rates as $n$ tends to infinity exists, and lies between $2.5$ and $4$ (see~\cite[Theorem 8]{VNB} and~\cite[Proposition 7.98]{Juge}). However, the precise description of this limit remained elusive for years, so we intended to compute it taking account of the information previously obtained in the article.

The strategy to obtain the limit implies the construction of a new bridge between (combinatorial) Monoid Theory and Real Analysis. Surprisingly, the coefficients of the leading root of the partial theta function (see \autoref{SS:partial}), which is defined in a pure analytical framework, can be computed counting braids. Consider the braid monoid $A_{\infty}$ on an infinite number of strands, which is the direct limit of the braid monoids $A_1\subset A_2\subset A_3\subset \cdots $ with the natural inclusions. We order the standard generators in the natural way ($a_1<a_2<a_3<\cdots$), and we consider the words in these generators ordered lexicographically. Every element in $A_{\infty}$ has a unique representative which is maximal with respect to this order. We will show the following:

{\bf \autoref{T:Sokal_sequence}.} {\it Let $x_0(y)$ be the only solution to the classical partial theta function $\displaystyle \sum_{k=0}^{\infty}{y^{k \choose 2}x^k}$, and let $\xi_0(y)=-x_0(y)=1+y+2y^2+4y^3+9y^4+\cdots$ For every $k\geq 0$, the coefficient of $y^k$ in the series $\xi_0(y)$ is equal to the number of braids of length $k$, in the monoid $A_{\infty}$, whose maximal lexicographic representative starts with $a_1$.}

 Then we use the fact that the growth rate of the coefficients $1,1,2,4,9,\ldots$ of $\xi_0(y)$ is known~\cite{Sokal} and, building on some results from~\cite{FlGo18}, we show the following:

{\bf \autoref{T:growth_limit}.} {\it Let $\displaystyle \rho=\lim_{n\to \infty}{\rho_{A_n}}$. Then $\rho=3.23363\ldots $ is the growth rate of the coefficients of $\xi_0(y)$. That is, $\rho$ is equal to the KLV-constant $q_{\infty}$.
}

It is overwhelming that this constant, that has a prominent role in different branches of Analysis (see \autoref{SS:constant}), appears here as a purely monoid-theoretic invariant associated to the braid monoid $A_n$, a fact that opens interesting perspectives of research. Moreover, we should remark that the constant can be determined with arbitrary precision, using for example the results of \cite{KLV03}. The reader interested in this relation between growth of braid monoids and the partial theta function can read \autoref{S:Garside} and \autoref{S:Counting}, and jump directly to \autoref{S:Growth_rates}.


{\bf Acknowledgements:} The second author thanks Yohei Komori for pointing out that the matrices determining the M\"obius polynomials could be simplified by taking all signs positive.

\section{Growth functions of homogeneous Garside monoids of finite type}

\label{S:Garside}

In this section we recall some basic facts of Garside theory and we discuss the growth of homogeneous Garside monoids of finite type.

In a Garside monoid $M$, which is cancellative, its elements form a lattice with respect to the {\it prefix order}, defined by $x\preccurlyeq y$ if $xz=y$ for some $z\in M$. The lattice property means that for every $a,b\in M$, there exist unique elements $a\wedge b$ and $a\vee b$, which are the greatest common divisor and the least common multiple, respectively, with respect to $\preccurlyeq$. It is important to notice that $\preccurlyeq$ is invariant under left multiplication and under left cancellation, that is, $x\preccurlyeq y$ if and only if $cx\preccurlyeq cy$ for every $c,x,y\in M$. This implies that $ca\wedge cb=c(a\wedge b)$ and $ca\vee cb=c(a\vee b)$ for every $a,b,c\in M$.

For every element in a Garside monoid $M$, the number of nontrivial factors that can be used to decompose it as a product of elements in $M$ is bounded above. The maximal number of factors for a given $a\in M$ is denoted $||a||$. It follows that every Garside monoid admits a special set of generators, called {\bf atoms}, which are those elements that cannot be decomposed as a product of two nontrivial elements: Every element $a\in M$ can be written as a product of $||a||$ atoms. A Garside monoid is said to be of {\bf finite type} if the set of atoms is finite.

Let us fix a Garside monoid $M$, and its set of atoms $\mathcal A$ as a set of generators. Given $a\in M$, its length $|a|$ is defined to be the length of the shortest word (in the atoms) representing $a$. If all relations in $M$ (written in terms of $\mathcal A$) are homogeneous, then $|a|=||a||$ is the word-length of any representative of $a$ as a word in the atoms. In this paper we shall only consider homogeneous Garside monoids of finite type, which include Artin--Tits monoids of spherical type.

For every integer $k$, let $M_k$ be the set of elements in $M$ of length $k$, and let $\alpha_k=|M_k|$ be its cardinal. Notice that $\alpha_0=1$, that $\alpha_k=0$ for $k<0$, and that $\alpha_k$ is a positive integer for $k\geq 0$, as we chose a finite set of generators. Let $t$ be an indeterminate. The {\bf growth function} (or growth series, or spherical growth series) of $M$ is defined to be:
$$
    g_M(t)=\sum_{k\geq 0}{\alpha_k t^k}
$$
It would be more precise to denote this function by $g_{M,\mathcal A}(t)$, as it depends on the generating set $\mathcal A$, but in this paper we shall only consider the set of atoms as generating set for $M$.

There are some well known results concerning the growth function of braid monoids, Artin--Tits monoids of spherical type, and more generally homogeneous Garside monoids of finite type. The first one is that $g_M(t)$ is the inverse of a polynomial (its M\"obius polynomial). This was shown by Deligne~\cite{Deligne} for Artin--Tits monoids of spherical type with their classical Garside structure. It was rediscovered by Bronfman~\cite{Bronfman}, generalizing it to a wider class of monoids, including homogeneous Garside monoids of finite type, and also rediscovered by Saito~\cite{Saito_2009}. We will show here a simple proof, in the spirit of Bronfman. Given a finite subset $S=\{s_1,\ldots,s_r\}\subset M$, let $\vee S =s_1\vee \cdots \vee s_r$. If $S=\emptyset$, we set $\vee S = 1\in M$.

\begin{theorem}[\cite{Deligne,Bronfman,Saito_2009}]\label{T:Deligne} Let $M$ be a homogeneous Garside monoid of finite type. Let $\mathcal A$ be its set of atoms, and let $\alpha_i=|M_i|$ be the number of elements of length $i$, written as a product of atoms. (Notice that $\alpha_i=0$ if $i<0$). Then, for every $k>0$, one has:
$$
 \sum_{S\subset \mathcal A } (-1)^{|S|} \alpha_{k-||\vee S||}=0
$$
\end{theorem}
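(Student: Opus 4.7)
The plan is to interpret the identity as an inclusion--exclusion count of those elements of $M_k$ that are not left-divisible by any atom. For each subset $S\subset \mathcal A$, I would set
$$D_S=\{x\in M_k \,:\, a\preccurlyeq x \text{ for every } a\in S\},$$
so that $D_\emptyset=M_k$. Since $(M,\preccurlyeq)$ is a lattice, the condition ``$a\preccurlyeq x$ for every $a\in S$'' is equivalent to $\vee S\preccurlyeq x$; hence $D_S$ is exactly the set of elements of length $k$ admitting $\vee S$ as a left-divisor. (For $S=\emptyset$ this reads $1\preccurlyeq x$, recovering $D_\emptyset=M_k$.)

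The first technical step is to establish $|D_S|=\alpha_{k-||\vee S||}$, with the convention $\alpha_j=0$ for $j<0$. By left cancellativity the assignment $y\mapsto (\vee S)\,y$ is injective on $M$, and its image is precisely the set of right-multiples of $\vee S$. Homogeneity makes the atom-word length additive, $|xy|=|x|+|y|$, so the restriction of this map to $M_{k-||\vee S||}$ has image contained in $M_k$ and coincides, after intersection with $M_k$, with $D_S$. This gives the bijection $M_{k-||\vee S||}\cong D_S$, and hence the claimed cardinality (vacuously $0$ when $||\vee S||>k$).

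Combining this with the classical inclusion--exclusion formula applied to the family $\{D_{\{a\}}\}_{a\in\mathcal A}$ inside $M_k$ yields
$$\Big|M_k\setminus \bigcup_{a\in\mathcal A}D_{\{a\}}\Big|=\sum_{S\subset\mathcal A}(-1)^{|S|}|D_S|=\sum_{S\subset\mathcal A}(-1)^{|S|}\alpha_{k-||\vee S||}.$$
To conclude, I would observe that for $k>0$ the left-hand side is empty: any $x\in M_k$ can be written as a product of $||x||=|x|=k\geq 1$ atoms, and its first factor is an atom which left-divides $x$; thus $x\in D_{\{a\}}$ for some $a\in\mathcal A$.

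The only delicate step is the length-preserving nature of the bijection $y\mapsto(\vee S)\,y$. It rests squarely on the additivity of the atom-word length, which is guaranteed only once homogeneity is invoked; in a non-homogeneous Garside monoid, elements sharing the same left-divisor $\vee S$ would be distributed across many $M_k$'s and the alternating sum would no longer telescope to zero. Once this point is clear, the rest is just bookkeeping.
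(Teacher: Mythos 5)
Your proof is correct and follows essentially the same route as the paper: the set $D_S$ is exactly the paper's $\bigcap_{a_i\in S}(a_iM)_k=((\vee S)M)_k$, the bijection $y\mapsto(\vee S)\,y$ is the same use of cancellativity and homogeneity to get $|D_S|=\alpha_{k-||\vee S||}$, and both arguments reduce to inclusion--exclusion over atoms dividing an element of $M_k$. The only cosmetic difference is that you phrase the conclusion as the vanishing of $|M_k\setminus\bigcup_a D_{\{a\}}|$, while the paper isolates $\alpha_k$ on one side and moves it across; these are the same identity.
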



\begin{proof}
Given $a\in M$, let $(aM)_k = \{b\in M;\ ||b||=k \mbox{ and } a\preccurlyeq b\}$, the set of elements of length $k$ which admit $a$ as a prefix. Let $\mathcal A=\{a_1,\ldots,a_m\}$. Every nontrivial element in $M$ must admit some atom as prefix, hence $M_k=
(a_1M)_k \cup \cdots \cup (a_mM)_k$.  By the inclusion-exclusion principle, it follows that
$$
\alpha_k = \sum_{\emptyset \neq S\subset \mathcal A} {(-1)^{|S|-1}\left|\bigcap_{a_i\in S}(a_iM)_k\right|}.
$$

As $M$ is cancellative and has homogeneous relations, one has
$$
(aM)_k = a\:M_{k-||a||}= \{ac\in M;\ ||c||=k-||a||\},
$$
for every $a\in M$. It follows that counting the elements in $(aM)_k$ is the same as counting the number of elements in $M$ of length $k-||a||$. That is, $|(aM)_k| = \alpha_{k-||a||}$.

Notice that given $S=\{x_1,\ldots,x_r\}\subset \mathcal A$, the common multiples of $x_1,\ldots,x_r$ are precisely the multiples of $x_1\vee \cdots \vee x_r$. Hence
$$
 (x_1M)_k\cap \cdots \cap (x_rM)_k = ((x_1\vee \cdots \vee x_r)M)_k = ((\vee S)M)_k,
$$
hence $\left|(x_1M)_k\cap \cdots \cap (x_rM)_k \right|=\alpha_{k-||\vee S||}$. Replacing this in the above expresion for $\alpha_k$, one gets:
$$
\alpha_k = \sum_{\emptyset \neq S\subset \mathcal A} {(-1)^{|S|-1}\alpha_{k-||\vee S||}},
$$
which is precisely what we wanted to show, as $\alpha_k=\alpha_{k-0}=\alpha_{k-||\vee \emptyset||}$.
\end{proof}

\begin{corollary}[\cite{Deligne,Bronfman,Saito_2009}]\label{C:Deligne}
If $M$ is a homogeneous Garside monoid of finite type, the growth function of $M$ is the inverse of a polynomial. Namely, if $\mathcal A\subset M$ is the set of atoms,
$$
    g_M(t)=\left( \sum_{S\subset \mathcal A } (-1)^{|S|}\: t^{||\vee S||} \right)^{-1}
$$
\end{corollary}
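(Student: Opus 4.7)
The plan is to verify directly that the product of the series $g_M(t)$ and the polynomial $P(t)=\sum_{S\subset \mathcal A}(-1)^{|S|}\,t^{||\vee S||}$ equals $1$, which is exactly the statement that $g_M(t)=P(t)^{-1}$ as formal power series. Since $P(t)$ is a polynomial (the sum runs over the finite family of subsets of the finite set $\mathcal A$), this will give the desired conclusion.

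First I would compute the $k$-th coefficient of the Cauchy product $g_M(t)\,P(t)$. By definition of $g_M(t)=\sum_{k\ge 0}\alpha_k t^k$ and grouping monomials according to the value $||\vee S||$, the coefficient of $t^k$ in $g_M(t)\,P(t)$ is
$$
\sum_{S\subset \mathcal A}(-1)^{|S|}\,\alpha_{k-||\vee S||},
$$
where we use the convention that $\alpha_j=0$ for $j<0$ to handle those $S$ with $||\vee S||>k$.

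Next I would split into cases according to $k$. For $k>0$, the previous sum is exactly the left hand side of the identity in \autoref{T:Deligne}, hence equals $0$. For $k=0$, only subsets $S$ with $||\vee S||=0$ contribute, and the only such subset is $S=\emptyset$ (any nonempty set of atoms has a join of length at least $1$, since atoms themselves have length $1$); this contribution is $(-1)^0\alpha_0=1$. Therefore $g_M(t)\,P(t)=1$ as formal power series, which is the claim.

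I do not expect any serious obstacle here: the whole content of the corollary is the reformulation of \autoref{T:Deligne} as the vanishing of the coefficients of the product $g_M(t)\,P(t)$ in positive degree, plus the trivial observation about the constant term. The only small point to keep straight is the convention $\alpha_j=0$ for $j<0$, and the fact that the sum defining $P(t)$ is finite so that the manipulation of the Cauchy product is unambiguous.
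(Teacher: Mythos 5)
Your proof is correct and takes essentially the same approach as the paper: both interpret \autoref{T:Deligne} as the statement that all positive-degree coefficients of the Cauchy product $g_M(t)\cdot P(t)$ vanish, and check the constant term separately. The only difference is presentational — the paper first collects the terms $(-1)^{|S|}t^{||\vee S||}$ into coefficients $c_i$ of a recurrence and then invokes the equivalence between linear recurrences and inverse polynomials, whereas you compute the Cauchy product directly; the underlying argument is the same.
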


\begin{proof}
Let $N=||\vee \mathcal A||$. Notice that, for every $k>0$, the formula in \autoref{T:Deligne} is a sum of terms of the form $\pm \alpha_{k-i}$ for some $i\leq N$. Collecting the terms with the same value of $i$, one obtains:
\begin{equation}\label{E:coeff_recurrence_relation}
0=\sum_{S\subset \mathcal A } (-1)^{|S|} \alpha_{k-||\vee S||}= \sum_{i=0}^{N}c_i \alpha_{k-i}.
\end{equation}
It is important to notice that the coefficient $c_i$ does not depend on $k$: it only depends on the number and size of subsets $S\subset \mathcal A$ such that $||\vee S||=i$. Since $c_0=1$, we have a recurrence relation which determines the sequence $\{\alpha_k\}_{k\geq 0}$:
\begin{equation}\label{E:coeff_recurrence_relation_explicit}
\alpha_k=-\sum_{i=1}^N{c_i\alpha_{k-i}},
\end{equation}
Each $\alpha_k$ is thus a linear combination of $\alpha_{k-1},\ldots,\alpha_{k-N}$, where the coefficients $-c_1,\cdots,-c_N$ are fixed (they depend only on the monoid $M$).

It is easy to check that the coefficients of a power series (in this case $g_M(t)$) satisfy such a recurrence relation if and only if the power series is the inverse of a polynomial, namely:
$$
   g_M(t) = \left( \sum_{i=0}^N{c_i\: t^i} \right)^{-1}.
$$
Notice that the polynomial in the above formula is the one obtained from the right hand side of~(\ref{E:coeff_recurrence_relation}) when replacing $\alpha_{k-i}$ with $t^i$ for $i=0,\ldots,N$. Therefore, by the second equality in~(\ref{E:coeff_recurrence_relation}), we obtain:
$$
   \sum_{S\subset \mathcal A } (-1)^{|S|} t^{||\vee S||}= \sum_{i=0}^{N}c_i t^{i}.
$$
Hence
$$
    g_M(t) = \left(\sum_{S\subset \mathcal A} (-1)^{|S|} t^{||\vee S||}\right)^{-1}.
$$
\end{proof}

We can then write $g_M(t)=\frac{1}{H_M(t)}$, where $H_M(t)=\sum_{S\subset \mathcal A} (-1)^{|S|} t^{||\vee S||}$, the M\"obius polynomial of $M$, has degree $||\vee \mathcal A||$. In the case of the braid monoid on $n$ strands, this degree is $n\choose 2$.

We remark that the recurrence relation~(\ref{E:coeff_recurrence_relation_explicit}) gives a very fast way to obtain $\alpha_k$ for any given $k>0$, starting from $\alpha_0=1$ and $\alpha_i=0$ for $i<0$, provided that the coefficients $c_i$ are known. But in order to obtain this recurrence relation~(\ref{E:coeff_recurrence_relation_explicit}) explicitly, using the formula in \autoref{T:Deligne}, one needs to collect all terms involving each $\alpha_{k-i}$. This means to run through all subsets of $\mathcal A$. Therefore, computing the recurrence relation~(\ref{E:coeff_recurrence_relation_explicit}) using \autoref{T:Deligne} has exponential complexity with respect to the number of atoms.

Nevertheless, when the number of atoms is small, one can compute the growth function without much problem using the formula in \autoref{C:Deligne}. For instance, we include here the (already known) growth functions of all Artin--Tits groups of spherical type which are not of type $A$, $D$, $E$.

$
g_{E_6}(t)=\left(\begin{array}{c}1-6t+10t^2-10t^4+5t^5-4t^6+3t^7+ \\ +4t^{10}-2t^{11}+t^{12}-t^{15}-2t^{20}+t^{36}\end{array}\right)^{-1}
$

$
g_{E_7}(t)=\left(\begin{array}{c}1-7t+15t^2-5t^3-16t^4+12t^5-3t^6+8t^7-3t^8-3t^9+ \\
  +6t^{10}-5t^{11}+t^{12}-3t^{15}+t^{16}-2t^{20}+2t^{21}+t^{30}+t^{36}-t^{63}\end{array}\right)^{-1}
$

$
g_{E_8}(t)=\left(\begin{array}{c}1-8t+21t^2-14t^3-21t^4+28t^5-7t^6+12t^7-8t^8-10t^9+ \\
+10t^{10}-12t^{11} +7t^{12} +2t^{13} -t^{14} -3t^{15}
+2t^{16}-2t^{20}+ \\ +6t^{21} -t^{22}-t^{23} -t^{28} +t^{30} +t^{36} -t^{37}-t^{42} -t^{63} +t^{120} \end{array}\right)^{-1}
$

$
g_{F_4}(t)=\left( 1-4t+3t^2+2t^3-t^4-2t^9+t^{24} \right)^{-1}
$

$
g_{H_3}(t)=\left( 1-3t+t^2+t^3+t^5-t^{15} \right)^{-1}
$

$
g_{H_4}(t)=\left( 1-4t+3t^2+2t^3-t^4+t^5-2t^6-t^{15}+t^{60} \right)^{-1}
$

$
g_{I_2(p)}(t)=\left( 1-2t+t^p\right)^{-1}
$

The last case is valid for $p\geq 3$ (even for $p=2$, which does not yield an irreducible monoid), although it is usual to denote $I_2(3)=A_2$ and $I_2(4)=B_2$.

Concerning the remaining types, Bronfman~\cite{Bronfman} gave a procedure to compute the recurrence relation~(\ref{E:coeff_recurrence_relation_explicit}), or equivalently the M\"obius polynomial $H_M(t)$, in the case of braid monoids (Artin--Tits monoids of type $A$). Namely, he gave a recurrence relation expressing the polynomial $H_{A_n}(t)$ in terms of $H_{A_1}(t), H_{A_2}(t), \ldots,$ $H_{A_{n-1}}(t)$. Using Bronfman's formula, one has a polynomial time algorithm (with respect to $n$) to compute the growth function of $A_n$.

In \autoref{T:Bronfman} we will recover Bronfman's recurrence relation as an immediate consequence of our formula for $g_{A_n}(t)$. And we will also obtain new recurrence relations involving Artin--Tits monoids of types $B$ and $D$. This will be done in the next section.

\section{Counting elements in a homogeneous Garside monoid} \label{S:Counting}

We will now present a more straightforward way to count the number of elements of given length in the monoid $M$, that is, the coefficients of the growth function $g_M(t)$, without using the formula in \autoref{T:Deligne}.

Recall that $M$ is a homogeneous Garside monoid of finite type, and that $M_k$ is the set of elements of length $k$ (as words in the atoms). Let $\mathcal A=\{a_1,\ldots,a_n\}$ be the set of atoms. We will study the elements in $M_k$ by choosing a suitable word representing each element. Namely, we define the {\bf lex-representative} of an element of $M$, as its biggest representative in lexicographical order with $a_1<a_2<\cdots <a_n$. Notice that the number of lex-representatives of length $k$ is precisely $\alpha_k$.

It is clear that $M_0=\{1\}$. For $k>0$, we will be able to count the number of elements in $M_k$ using a stratification of these sets by suitable subsets.
\begin{definition}
For $k\geq 0$ and $i=1,\ldots,n$, we define $L_k^{(i)}$ to be the set of elements in $M_k$ whose lex-representative starts with $a_i$. That is,
$$
   L_k^{(i)}=\{b\in M_k;\quad a_i\preccurlyeq b, \quad a_{i+1},\ldots,a_{n}\not\preccurlyeq b \}.
$$
\end{definition}

It is then clear that, for $k>0$, \quad $M_k= L_k^{(1)}\sqcup L_k^{(2)}\sqcup \cdots \sqcup L_k^{(n)}$.

The number of elements in each $L_k^{(i)}$ is not easy to compute directly. But we can define some related subsets of $M_k$ which will allow us to perform the computation:

\begin{definition}
Given $k\geq 0$ and $1\leq i\leq j \leq n$, let
$$
 U_{k}^{(i,j)}=\{b\in M_k;\quad a_i,\ldots,a_{j}\preccurlyeq b, \quad a_{j+2},\ldots, a_n\not \preccurlyeq b \}
$$
\end{definition}

Notice that if $k>0$, every $b\in U_{k}^{(i,j)}$ admits all atoms $a_{i},\ldots, a_j$ as prefixes, which is equivalent to $a_i\vee \cdots \vee a_j\preccurlyeq b$. As $a_{j}\preccurlyeq b$, the lex-representative of $b$ starts either with $a_{j}$ or with an atom of bigger index. But $b$ does not admit $a_t$ as prefix for $t\geq j+2$. Hence, the lex-representative of $b$ starts with either $a_{j}$ or $a_{j+1}$.

Therefore, $b\in U_{k}^{(i,j)}$ if and only if it has length $k$, its lex-representative starts with either $a_j$ or $a_{j+1}$, and $a_{i}\vee \cdots \vee a_j \preccurlyeq b$.

\begin{lemma}\label{L:L in terms of U}
For every $k\geq 0$ and $i=1,\ldots,n$ one has:
$$
  \left|L_k^{(i)}\right|=\sum_{j=i}^{n}{(-1)^{j-i}\left|U_{k}^{(i,j)}\right|}
$$
\end{lemma}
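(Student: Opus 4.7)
The plan is to introduce an intermediate family of sets that interpolates between the $L_k^{(i)}$ and the $U_k^{(i,j)}$, and then obtain the claimed identity by a simple telescoping.

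Specifically, for $1 \leq i \leq j \leq n$ and $k \geq 0$, I would define
\[
V_k^{(i,j)} = \{b \in M_k \;:\; a_i, \ldots, a_j \preccurlyeq b,\; a_{j+1}, \ldots, a_n \not\preccurlyeq b\}.
\]
The first observation is that $V_k^{(i,i)} = L_k^{(i)}$: the conditions $a_i \preccurlyeq b$ and $a_{i+1}, \ldots, a_n \not\preccurlyeq b$ coincide with the definition of $L_k^{(i)}$. The second observation is the disjoint decomposition
\[
U_k^{(i,j)} = V_k^{(i,j)} \sqcup V_k^{(i,j+1)} \qquad (j < n),
\]
obtained by splitting $U_k^{(i,j)}$ according to whether the atom $a_{j+1}$ is a prefix of $b$ or not: the definition of $U_k^{(i,j)}$ leaves the status of $a_{j+1}$ free while fixing everything else, and the two resulting cases are exactly the conditions defining $V_k^{(i,j+1)}$ and $V_k^{(i,j)}$ respectively. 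Finally, $U_k^{(i,n)} = V_k^{(i,n)}$ trivially, as there is no atom $a_{n+1}$.

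From the decomposition I get the recursion $|V_k^{(i,j)}| = |U_k^{(i,j)}| - |V_k^{(i,j+1)}|$ for $j < n$, with boundary value $|V_k^{(i,n)}| = |U_k^{(i,n)}|$. Iterating this identity starting from $j = i$ telescopes to
\[
|L_k^{(i)}| = |V_k^{(i,i)}| = \sum_{j=i}^{n-1}(-1)^{j-i}|U_k^{(i,j)}| + (-1)^{n-i}|V_k^{(i,n)}| = \sum_{j=i}^{n}(-1)^{j-i}|U_k^{(i,j)}|,
\]
which is the claimed formula. A short induction on $n-i$ would make the telescoping rigorous.

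The argument is essentially bookkeeping, so there is no serious obstacle. The only place one has to be careful is verifying the disjoint union $U_k^{(i,j)} = V_k^{(i,j)} \sqcup V_k^{(i,j+1)}$: one must check that the definition of $U_k^{(i,j)}$ imposes no constraint on $a_{j+1}$ (reading the preceding paragraphs of the paper confirms this) and that the required and forbidden prefix sets in each piece match exactly. Once this decomposition is in hand, the rest of the proof is a one-line telescoping.
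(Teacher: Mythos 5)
Your proof is correct and rests on the same key observation as the paper's: each $U_k^{(i,j)}$ splits into two disjoint pieces according to whether $a_{j+1}\preccurlyeq b$, and these pieces are exactly what you call $V_k^{(i,j)}$ and $V_k^{(i,j+1)}$. The paper organizes the resulting cancellation by grouping the even-indexed and odd-indexed $U$'s into two disjoint unions and subtracting (with a minor case split on the parity of $n-i$), whereas you iterate the recursion $|V_k^{(i,j)}|=|U_k^{(i,j)}|-|V_k^{(i,j+1)}|$ directly — a slightly cleaner packaging of the same telescoping argument.
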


\begin{proof}
The case $k=0$ is trivial, as all sets are empty. We will then assume that $k>0$.

For every element $b\in U_{k}^{(i,j)}$, its lex-representative starts with either $a_{j}$ or $a_{j+1}$. This depends on whether $a_{j+1}\preccurlyeq b$ or not. We can then split $U_{k}^{(i,j)}$ into two disjoint subsets, if $j<n$:
\begin{eqnarray*}
   U_{k}^{(i,j)} & = &\{b\in M_k;\quad a_i,\ldots,a_{j}\preccurlyeq b, \quad a_{j+1},\ldots, a_n\not\preccurlyeq b \} \\
   & \bigsqcup & \{b\in M_k;\quad a_i,\ldots,a_{j+1}\preccurlyeq b, \quad a_{j+2},\ldots, a_n\not\preccurlyeq b \}.
\end{eqnarray*}
If $j=n$ we just have:
$$
   U_{k}^{(i,n)}=\{b\in M_k;\ a_i,\ldots,a_n\preccurlyeq b\}.
$$
It is important to notice that if $j_1$ and $j_2$ are not consecutive, the sets $U_k^{(i,j_1)}$ and $U_k^{(i,j_2)}$ are disjoint. Hence, we can consider the disjoint union $U_{k}^{(i,i)}\sqcup U_{k}^{(i,i+2)}\sqcup U_{k}^{(i,i+4)} \sqcup \cdots$, and also the disjoint union $U_{k}^{(i,i+1)}\sqcup U_{k}^{(i,i+3)}\sqcup U_{k}^{(i,i+5)} \sqcup \cdots$.

Suppose first that $n-i$ is odd. In this case:
$$
  U_{k}^{(i,i)}\sqcup U_{k}^{(i,i+2)}\sqcup \cdots \sqcup U_{k}^{(i,n-1)}= \bigsqcup_{j=i}^{n}\{b\in M_k;\quad a_i,\ldots,a_j\preccurlyeq b, \quad a_{j+1},\ldots,a_n\not\preccurlyeq b\}.
$$
And also:
$$
  U_{k}^{(i,i+1)}\sqcup U_{k}^{(i,i+3)}\sqcup \cdots \sqcup U_{k}^{(i,n)}= \bigsqcup_{j=i+1}^{n}\{b\in M_k;\quad a_i,\ldots,a_j\preccurlyeq b, \quad a_{j+1},\ldots,a_n\not\preccurlyeq b\}.
$$
Therefore, the former disjoint union contains the latter, and we have:
$$
 \left(U_{k}^{(i,i)}\sqcup U_{k}^{(i,i+2)}\sqcup \cdots \sqcup U_{k}^{(i,n-1)}\right)\setminus \left(U_{k}^{(i,i+1)}\sqcup U_{k}^{(i,i+3)}\sqcup \cdots \sqcup U_{k}^{(i,n)}\right)
$$
$$
 =\{b\in M_k;\quad a_i\preccurlyeq b, \quad a_{i+1},\ldots,a_n\not\preccurlyeq b\}=L_k^{(i)},
$$
which implies the formula in the statement.

If $n-i$ is even, the argument is analogous. We have:
$$
  U_{k}^{(i,i)}\sqcup U_{k}^{(i,i+2)}\sqcup \cdots \sqcup U_{k}^{(i,n)}= \bigsqcup_{j=i}^{n}\{b\in M_k;\quad a_i,\ldots,a_j\preccurlyeq b, \quad a_{j+1},\ldots,a_n\not\preccurlyeq b\}.
$$
And also:
$$
  U_{k}^{(i,i+1)}\sqcup U_{k}^{(i,i+3)}\sqcup \cdots \sqcup U_{k}^{(i,n-1)}= \bigsqcup_{j=i+1}^{n}\{b\in M_k;\quad a_i,\ldots,a_j\preccurlyeq b, \quad a_{j+1},\ldots,a_n\not\preccurlyeq b\}.
$$
Hence, removing the latter union from the former yields $L_k^{(i)}$, and the formula in the statement also holds in this case.
\end{proof}

We are mainly interested in Artin--Tits monoids of type $A$, $B$ and $D$ (also called Artin--Tits monoids of type $A_n$, $B_n$ and $D_n$). In those cases, we will be able to describe the sizes of each $L_k^{(i)}$ and each $U_{k}^{(i,j)}$ in terms of the sizes of the following sets:

\begin{definition}
For $k\geq 0$ and $i=0,\ldots,n+1$, let
$$
   M_k^{(i)}=\{b\in M_k;\quad a_{i+1},\ldots,a_{n}\not\preccurlyeq b\}.
$$
\end{definition}

Notice that $M_0^{(i)}=\{1\}$ for every $i$. On the other hand, if $k>0$, the condition $a_{i+1},\ldots,a_{n}\not\preccurlyeq b$ just means that the lex-representative of $b$ starts with an atom from $\{a_1,\ldots,a_i\}$. Hence, if $k>0$:
$$
    M_k^{(i)}=L_k^{(1)}\sqcup L_k^{(2)}\sqcup \cdots \sqcup L_k^{(i)}.
$$
Notice that $M_k^{(0)}=\emptyset$ and that $M_k^{(n+1)}=M_k^{(n)}=M_k$.

It is clear, by definition, that $M_k^{(i-1)}$ is a subset of $M_k^{(i)}$, and that
$$
    L_k^{(i)}=M_k^{(i)}\setminus M_k^{(i-1)}.
$$
Hence:
$$
    \left|L_k^{(i)}\right|=\left|M_k^{(i)}\right|- \left|M_k^{(i-1)}\right|.
$$
In order to avoid cumbersome notation in the following formulae, we will denote $m_{k,i}=\left|M_k^{(i)}\right|$ and $u_{k,i,j}=\left|U_{k}^{(i,j)}\right|$. Then we have:

\begin{proposition}\label{L:M in terms of U}
For $k\geq 0$ and $i=1,\ldots,n+1$, one has:
$$
    m_{k,i}=m_{k,i-1}+\sum_{j=i}^n{(-1)^{j-i}u_{k,i,j}}.
$$
\end{proposition}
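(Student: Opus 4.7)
The plan is to combine two facts that are already in hand. First, I would unpack the telescoping relation $M_k^{(i)} = M_k^{(i-1)} \sqcup L_k^{(i)}$, which was observed in the discussion just before the proposition: for $k>0$ it follows from the identity $M_k^{(i)} = L_k^{(1)} \sqcup \cdots \sqcup L_k^{(i)}$, and for $k=0$ every set involved is either $\{1\}$ or empty, so the relation is checked by hand. Taking cardinalities gives
$$m_{k,i} = m_{k,i-1} + |L_k^{(i)}|.$$

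Second, I would invoke \autoref{L:L in terms of U} to rewrite $|L_k^{(i)}|$ as the alternating sum $\sum_{j=i}^{n} (-1)^{j-i} u_{k,i,j}$. Substituting directly into the previous display yields exactly
$$m_{k,i} = m_{k,i-1} + \sum_{j=i}^{n} (-1)^{j-i} u_{k,i,j},$$
which is the claimed identity.

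There is essentially no obstacle here, so the only point deserving care is boundary behavior. For $i=1$ we have $m_{k,0}=0$ when $k>0$ and $m_{0,0}=1$, and in both regimes one verifies that the formula reduces consistently to $|L_k^{(1)}| = \sum_{j=1}^n (-1)^{j-1} u_{k,1,j}$, which is the $i=1$ case of \autoref{L:L in terms of U}. For $i=n+1$ the sum on the right is empty, and indeed $M_k^{(n+1)} = M_k^{(n)}$ by definition, giving $m_{k,n+1} = m_{k,n}$ as required. Hence the proposition follows immediately from the decomposition of $M_k^{(i)}$ together with \autoref{L:L in terms of U}.
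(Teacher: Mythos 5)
Your proof is correct and follows essentially the same route as the paper's: in both cases the key observations are that $m_{k,i}-m_{k,i-1}=|L_k^{(i)}|$ (from $L_k^{(i)}=M_k^{(i)}\setminus M_k^{(i-1)}$, already noted before the proposition) and that \autoref{L:L in terms of U} supplies the alternating sum, with the $i=n+1$ case handled separately via $M_k^{(n+1)}=M_k^{(n)}$. The extra attention you pay to the $k=0$ and $i=1$ boundary cases is harmless and, if anything, slightly more careful than the paper's one-line justification.
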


\begin{proof}
If $i\leq n$, this is just \autoref{L:L in terms of U}, as $m_{k,i}-m_{k,i-1}=\left|L_k^{(i)}\right|$.  If $i=n+1$, the formula reads $m_{k,n+1}=m_{k,n}$, which is true since $M_k^{(n+1)}=M_k^{(n)}$.
\end{proof}

If the monoid $M$ is an Artin--Tits monoid of type $A_n$, $B_n$ or $D_n$, we will be able to describe each number $u_{k,i,j}$ in terms of some $m_{l,t}$, with $l<k$. Replacing this in the formula of \autoref{L:M in terms of U}, we will obtain a recurrence relation for the numbers $m_{k,i}$, which is precisely what we need to compute the number $\alpha_k=m_{k,n}=m_{k,n+1}$.

We will then compute a table of the form

\begin{center}
\begin{tabular}{|c|c|c|c|c|}
\hline
 $m_{0,1}$ & $m_{0,2}$ & \ldots & $m_{0,n}$ & $m_{0,n+1}$ \\ \hline
 $m_{1,1}$ & $m_{1,2}$ & \ldots & $m_{1,n}$ & $m_{1,n+1}$ \\ \hline
 $m_{2,1}$ & $m_{2,2}$ & \ldots & $m_{2,n}$ & $m_{2,n+1}$ \\ \hline
 $m_{3,1}$ & $m_{3,2}$ & \ldots & $m_{3,n}$ & $m_{3,n+1}$ \\ \hline
  \vdots & \vdots &         & \vdots & \vdots \\ \hline
 $m_{k,1}$ & $m_{k,2}$ & \ldots & $m_{k,n}$ & $m_{k,n+1}$ \\ \hline
  \vdots & \vdots &         & \vdots & \vdots \\ \hline
\end{tabular}
\end{center}
The last two columns of this table will be identical, and they will contain the numbers $m_{k,n}=m_{k,n+1}=\alpha_k$, that is, the number of elements in $M_k$.

We will be able to compute each number in the above table, as a (signed) sum of at most $n$ of the previous elements. The method for computing each $m_{k,i}$ will produce the new formulae for the M\"obius polynomial of the monoid, and a better understanding of its growth rate, in the cases in which $M$ is an Artin--Tits monoid of type $A_n$, $B_n$ or $D_n$.

\section{Artin--Tits monoid of type A}

\subsection{Counting elements in the monoid (type A)}

Let $A_n$ be an Artin--Tits monoid of type $A$ with $n$ standard generators. That is, the positive braid monoid with $n+1$ strands. Its standard presentation is the following:
$$
A_n=\left\langle a_1,\ldots,a_n \; \left| \begin{array}{cl} a_ia_j=a_ja_i, & |i-j|>1 \\ a_ia_ja_i=a_ja_ia_j, & |i-j|=1 \end{array}   \right.\right\rangle
$$
It is well--known~\cite{Paris}  that if $1\leq i\leq j\leq n$, the submonoid of $A_n$ generated by $\{a_i,\ldots,a_j\}$ is again an Artin--Tits monoid of type $A_{j-i+1}$. Therefore, the least common multiple of the standard generators (in both the submonoid and the monoid) is~\cite[Lemma 3.1]{Paris2}:
$$
  a_i\vee \cdots \vee a_j = a_i (a_{i+1}a_i)(a_{i+2}a_{i+1}a_{i})\cdots (a_{j}a_{j-1}\cdots a_i).
$$
Hence:
$$
   ||a_{i}\vee \cdots \vee a_j||= {j-i+2 \choose 2}.
$$
Also, for $t \geq j+2$, the atom $a_t$ commutes with $a_{i},\ldots,a_j$, hence it commutes with the whole element $a_i\vee \cdots \vee a_j$. Therefore:
$$
   (a_{i}\vee \cdots \vee a_j)\vee a_t = (a_{i}\vee \cdots \vee a_j)\: a_t.
$$
Because of these two properties, we can explicitly determine the numbers $u_{k,i,j}$ as follows.

\begin{lemma}\label{L:U case A}
Let $M$ be the Artin--Tits monoid of type $A_n$, and let $1\leq i \leq  j \leq n$. One has
$$
   u_{k,i,j}=m_{k-{j-i+2\choose 2},j+1}
$$
\end{lemma}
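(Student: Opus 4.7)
The plan is to exhibit an explicit bijection between $U_k^{(i,j)}$ and $M_{k-\binom{j-i+2}{2}}^{(j+1)}$ by left-factoring out the lcm $\Delta := a_i\vee\cdots\vee a_j$.

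First I would recall from the preceding discussion that $\|\Delta\| = \binom{j-i+2}{2}$ and that, for every $t\ge j+2$, the atom $a_t$ commutes with $\Delta$, so $\Delta\vee a_t = \Delta\,a_t$. These are the only two features of type $A_n$ that the proof needs.

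Next, for any $b\in U_k^{(i,j)}$, the conditions $a_i,\ldots,a_j\preccurlyeq b$ are equivalent to $\Delta\preccurlyeq b$, so there is a unique $c\in M$ with $b=\Delta c$; cancellativity gives $\|c\|=k-\binom{j-i+2}{2}$. This defines a map $\varphi\co U_k^{(i,j)}\to M_{k-\binom{j-i+2}{2}}$ sending $b$ to $c$, and $\varphi$ is injective by left cancellation.

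The key step is to show that the image of $\varphi$ is exactly $M_{k-\binom{j-i+2}{2}}^{(j+1)}$, i.e.\ that for $t\ge j+2$,
$$
  a_t\preccurlyeq \Delta c \ \Longleftrightarrow\ a_t\preccurlyeq c.
$$
For the forward direction, if $a_t\preccurlyeq \Delta c$ then $\Delta\vee a_t\preccurlyeq \Delta c$ by the lattice property, which by commutativity reads $\Delta\,a_t\preccurlyeq \Delta c$; left cancellation of $\Delta$ yields $a_t\preccurlyeq c$. The reverse direction is immediate from $\Delta c\succcurlyeq c$. Applying this equivalence for each $t\in\{j+2,\ldots,n\}$ shows that $b\in U_k^{(i,j)}$ iff $c\in M_{k-\binom{j-i+2}{2}}^{(j+1)}$, so $\varphi$ is the desired bijection and $u_{k,i,j}=m_{k-\binom{j-i+2}{2},j+1}$.

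The only mildly delicate point is the equivalence displayed above; once the commutation $\Delta\vee a_t=\Delta a_t$ and left cancellativity of $M$ are in hand it is essentially a one-line verification, so I do not anticipate a serious obstacle beyond bookkeeping of the indices (in particular that the admissible range $t\ge j+2$ in the definition of $U_k^{(i,j)}$ matches the range $t\ge (j+1)+1$ in the definition of $M^{(j+1)}_\bullet$).
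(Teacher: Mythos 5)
Your overall approach is the same as the paper's: factor out $\Delta := a_i\vee\cdots\vee a_j$, use homogeneity and left-cancellativity to reduce to counting elements of length $k-\binom{j-i+2}{2}$, and use the commutation $\Delta\vee a_t = \Delta a_t$ for $t\ge j+2$ to translate the prefix conditions. The paper does this by rewriting the set $U_k^{(i,j)}$ directly rather than packaging it as a bijection $\varphi$, but that is a cosmetic difference.

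However, there is a small but genuine error in your reverse implication. You claim that $a_t\preccurlyeq c \Rightarrow a_t\preccurlyeq \Delta c$ is ``immediate from $\Delta c\succcurlyeq c$.'' Whatever precisely you mean by that relation, the inference is not valid: in a noncommutative monoid, a prefix of a suffix of $\Delta c$ need not be a prefix of $\Delta c$ (e.g.\ in a free monoid $a$ is a prefix of $ab$ but not of $b\cdot ab$). The reverse implication also needs the commutation you established: from $a_t\preccurlyeq c$, left-multiply by $\Delta$ to get $\Delta a_t\preccurlyeq \Delta c$, and then use $\Delta a_t = a_t\Delta$ to conclude $a_t\preccurlyeq \Delta c$. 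Equivalently, one can argue as the paper does in a single chain of equivalences: $a_t\preccurlyeq\Delta c$ iff $\Delta\vee a_t\preccurlyeq\Delta c$ (since $\Delta\preccurlyeq\Delta c$ always and the lattice property gives both directions), iff $\Delta a_t\preccurlyeq\Delta c$, iff $a_t\preccurlyeq c$ by left cancellation; this avoids splitting into forward and reverse cases and makes the role of commutativity symmetric. With the reverse direction repaired, your proof is correct and matches the paper's.
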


\begin{proof}
We have
\begin{eqnarray*}
 U_{k}^{(i,j)} & = & \{b\in M_k;\quad a_i,\ldots,a_{j}\preccurlyeq b, \quad a_{j+2},\ldots, a_n\not\preccurlyeq b \} \\
 & = & \{b\in M_k;\quad  a_i\vee\cdots \vee a_j\preccurlyeq b, \quad a_{j+2}, \ldots, a_n \not\preccurlyeq b \} \\
 & = & \{(a_{i}\vee \cdots \vee a_j)c \in M_k;\quad a_{j+2},\ldots,a_{n}\not\preccurlyeq (a_{i}\vee \cdots \vee a_j)c \}
\end{eqnarray*}
Notice that for every $t\geq j+2$, one has $a_t\preccurlyeq (a_{i}\vee \cdots \vee a_j)c$ if and only if $(a_{i}\vee \cdots \vee a_j)\vee a_t\preccurlyeq (a_{i}\vee \cdots \vee a_j)c$, that is  $(a_{i}\vee \cdots \vee a_j)\: a_t\preccurlyeq (a_{i}\vee \cdots \vee a_j)c$, which is equivalent to $a_t\preccurlyeq c$. Hence
$$
 U_{k}^{(i,j)} = \{(a_{i}\vee \cdots \vee a_j)c \in M_k;\ a_{j+2},\ldots,a_{n}\not\preccurlyeq c \}.
$$
As $M$ is cancellative and homogeneous, the set of elements $(a_{i}\vee \cdots \vee a_j)c$ having length $k$ is in bijection with set of elements $c$ having length $k-||a_{i}\vee \cdots \vee a_j||=k-{j-i+2\choose 2}$. Therefore
$$
\left|U_{k}^{(i,j)} \right|= \left|\{c \in M_{k-{j-i+2\choose 2}};\ a_{j+2},\ldots,a_{n}\not\preccurlyeq c \}\right| = \left| M_{k-{j-i+2\choose 2}}^{(j+1)} \right|
$$
\end{proof}

\begin{corollary}\label{C:M recurrence relation type A}
In the Artin--Tits monoid $A_n$, for $k\geq 0$ and $i=1,\ldots,n+1$, one has:
$$
   m_{k,i}=m_{k,i-1}+\sum_{j=i}^{n}{(-1)^{j-i}m_{k-{j-i+2\choose 2},j+1}}
$$
\end{corollary}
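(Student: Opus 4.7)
The plan is to obtain the statement by direct substitution of two previously established results. \autoref{L:M in terms of U} (which was itself deduced from \autoref{L:L in terms of U} together with the identity $m_{k,i}-m_{k,i-1}=|L_k^{(i)}|$) gives the general recurrence
$$
    m_{k,i}=m_{k,i-1}+\sum_{j=i}^n{(-1)^{j-i}u_{k,i,j}},
$$
valid in any homogeneous Garside monoid of finite type with a chosen ordering of its atoms. This identity is \emph{structural}: it does not use any property specific to type $A$. What is specific to type $A$ is the evaluation of the numbers $u_{k,i,j}$, which is the content of \autoref{L:U case A}:
$$
    u_{k,i,j}=m_{k-\binom{j-i+2}{2},\,j+1}.
$$

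Thus the plan is: invoke \autoref{L:M in terms of U} to write $m_{k,i}$ as a signed sum involving $m_{k,i-1}$ and the $u_{k,i,j}$ for $j=i,\ldots,n$; then replace every $u_{k,i,j}$ by the expression supplied by \autoref{L:U case A}. The substitution is term-by-term and preserves the signs $(-1)^{j-i}$, immediately yielding
$$
    m_{k,i}=m_{k,i-1}+\sum_{j=i}^{n}{(-1)^{j-i}m_{k-\binom{j-i+2}{2},\,j+1}},
$$
which is the desired formula.

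There is essentially no obstacle: all the real work has been done already (the inclusion/exclusion for $L_k^{(i)}$ in terms of the $U_k^{(i,j)}$, and the computation of $\|a_i\vee\cdots\vee a_j\|=\binom{j-i+2}{2}$ together with the commutation argument used to reduce $U_k^{(i,j)}$ to $M_{k-\binom{j-i+2}{2}}^{(j+1)}$). The only minor bookkeeping issue is the boundary case $i=n+1$: then the sum on the right is empty and the formula correctly reduces to $m_{k,n+1}=m_{k,n}$, consistent with $M_k^{(n+1)}=M_k^{(n)}=M_k$, which is exactly the boundary case already handled in the proof of \autoref{L:M in terms of U}. It is also harmless that the indices $k-\binom{j-i+2}{2}$ may become negative; in that regime $m_{k,j+1}$ is defined to be $0$ (since $M_k=\emptyset$ for $k<0$), so the terms simply drop out and the recurrence remains valid.
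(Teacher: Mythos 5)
Your proof is correct and follows exactly the paper's argument: invoke \autoref{L:M in terms of U} to express $m_{k,i}$ in terms of $m_{k,i-1}$ and the $u_{k,i,j}$, then substitute the type-$A$ evaluation of $u_{k,i,j}$ from \autoref{L:U case A}. The extra remarks on the $i=n+1$ case and on negative indices are harmless and consistent with the conventions the paper has already established.
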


\begin{proof}
This is a direct consequence of \autoref{L:M in terms of U} and \autoref{L:U case A}.
\end{proof}

The above recurrence relation allows us to compute, in a very efficient way, a table whose entries are $m_{k,i}$ for $i=1,\ldots,n+1$ and $k\geq 0$, for the monoid $A_n$. We start the table with the first row:
$$
   m_{0,1}=m_{0,2}=\cdots = m_{0,n+1}=1
$$
The second row can then be computed, from left to right, using the recurrence relation of \autoref{C:M recurrence relation type A}. That is, we compute $m_{1,1}=m_{1,0}+m_{0,2}=0+1=1$, then $m_{1,2}=m_{1,1}+m_{0,3}=1+1=2$, then $m_{1,3}=2+1=3$ and so on, up to $m_{1,n}=(n-1)+1=n$ and $m_{1,n+1}=n$. In the same way, we can compute each new row from the previous ones, starting from the leftmost entry, using the recurrence relation of \autoref{C:M recurrence relation type A}. For instance, we can see in \autoref{F:table A_3} the first seven rows of the table corresponding to the monoid $A_3$. Notice for instance that $m_{6,1}=m_{6,0}+m_{5,2}-m_{3,3}+m_{0,4}=0+51-19+1=33$, or that $m_{6,2}= m_{6,1}+m_{5,3}-m_{3,4}= 33+94-19 = 108$.

\begin{figure}[ht]
\begin{center}
\begin{tabular}{|@{}c@{}||c|c|c|c|}
\hline $\begin{array}{ccc} & & \\ &  & i \\ & k &  \end{array}$ & 1 & 2 & 3 & 4 \\
\hline \hline 0 & 1 & 1 & 1 & 1 \\
\hline 1 & 1 & 2 & 3 & 3 \\
\hline 2 & 2 & 5 & 8 & 8 \\
\hline 3 & 4 & 11 & 19 & 19 \\
\hline 4 & 8 & 24 & 43 & 43 \\
\hline 5 & 16 & 51 & 94 & 94 \\
\hline 6 & 33 & 108 & 202 & 202 \\
\hline
\end{tabular}
\end{center}
\caption{A table containing $m_{k,i}$ for the Artin--Tits monoid $A_3$, for $k\leq 6$.}
\label{F:table A_3}
\end{figure}

Now recall that the rightmost column (and also the adjacent column, which is identical) contains precisely the coefficients of the growth function $g_{A_n}(t)$, as $m_{k,n+1} = m_{k,n} = \left|M_k^{(n)}\right| = \left|M_k\right| = \alpha_k$. For instance, the table in \autoref{F:table A_3} tells us that there are $202$ elements of length 6 in the monoid $A_3$. In other words, there are $202$ positive braids of length 6 with 4 strands.

\subsection{A new formula for the growth function (type A)}

From the recurrence relation given in \autoref{C:M recurrence relation type A} to compute the table of $m_{k,i}$'s, we will be able to provide a new formula for the growth function of the Artin--Tits monoid $A_n$. We will also do the same for types $B_n$ and $D_n$ in \autoref{S:B_and_D}.

Notice that every new entry of the table is obtained from the previous ones by a linear combination with coefficients $0$ or $\pm 1$. Moreover, in order to compute the $k$th row of the table, one just needs to use values from the previous ${n+1 \choose 2}$ rows.

More precisely, for $k\geq 1$ let $\mathbf v_{k-1}$ be a column vector whose entries correspond to the entries of rows $k-1$ to $k-{n+1\choose 2}$in the table (we consider $m_{t,i}=0$ if $t<0$). Namely
$$
  \mathbf v_{k-1} = \left(m_{k-1,1}\cdots m_{k-1,n+1} \: m_{k-2,1}\cdots m_{k-2,n+1} \cdots m_{k-{n+1\choose 2},1} \cdots m_{k-{n+1\choose 2},n+1}\right)^t
$$

For instance, if $M=A_3$, we can check from the table in \autoref{F:table A_3} that:
$$
  \mathbf v_0=\left(1 \ 1 \ 1 \ 1 \ | \ 0 \ 0 \ 0 \ 0 \ | \ 0 \ 0 \ 0 \ 0 \ | \ 0 \ 0 \ 0 \ 0 \ | \ 0 \ 0 \ 0 \ 0 \ | \ 0 \ 0 \ 0 \ 0  \right)^t
$$
$$
  \mathbf v_1=\left(1 \ 2 \ 3 \ 3 \ | \ 1 \ 1 \ 1 \ 1 \ | \ 0 \ 0 \ 0 \ 0 \ | \ 0 \ 0 \ 0 \ 0 \ | \ 0 \ 0 \ 0 \ 0 \ | \ 0 \ 0 \ 0 \ 0  \right)^t
$$
$$
  \mathbf v_2=\left(2 \ 5 \ 8 \ 8 \ | \ 1 \ 2 \ 3 \ 3 \ | \ 1 \ 1 \ 1 \ 1 \ | \ 0 \ 0 \ 0 \ 0 \ | \ 0 \ 0 \ 0 \ 0 \ | \ 0 \ 0 \ 0 \ 0 \right)^t
$$
And so on. Just to give an example with nonzero entries, we have:
$$
  \mathbf v_6=\left(33 \ 108 \ 202 \ 202 \ | \ 16 \ 51 \ 94 \ 94 \ | \ 8 \ 24 \ 43 \ 43 \ | \ 4 \ 11 \ 19 \ 19 \ | \ 2 \ 5 \ 8 \ 8 \ | \ 1 \ 2 \ 3 \ 3  \right)^t
$$

From the above arguments, every entry of $\mathbf v_k$ is a linear combination of the entries of $\mathbf v_{k-1}$, for every $k>0$. And the coefficients of the linear combination do not depend on $k$. Hence, we have:

\begin{lemma}\label{L:matrix A}
There is a square matrix $\mathcal A$ with ${n+1\choose 2}(n+1)$ rows, whose entries belong to $\{0,1,-1\}$ such that for every $k\geq 1$
$$
      \mathcal A \mathbf v_{k-1} = \mathbf v_k
$$
\end{lemma}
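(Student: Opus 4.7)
The plan is to split $\mathcal A$ into a trivial shift part plus one block of rows coming from the recurrence in \autoref{C:M recurrence relation type A}. Partition $\mathbf v_{k-1}$ into ${n+1\choose 2}$ consecutive blocks of $n+1$ entries each, where block $t$ equals $(m_{k-t,1},\ldots,m_{k-t,n+1})$, and do the same for $\mathbf v_k$, whose block $t$ is $(m_{k-t+1,1},\ldots,m_{k-t+1,n+1})$. Blocks $2,\ldots,{n+1\choose 2}$ of $\mathbf v_k$ coincide with blocks $1,\ldots,{n+1\choose 2}-1$ of $\mathbf v_{k-1}$, so the lower $(n+1)\bigl({n+1\choose 2}-1\bigr)$ rows of $\mathcal A$ are shift rows, each with a single $1$ in the appropriate position.

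The real content lies in the top $n+1$ rows, which must express $m_{k,1},\ldots,m_{k,n+1}$ as $\{0,\pm 1\}$-linear combinations of entries of $\mathbf v_{k-1}$. The idea is to telescope \autoref{C:M recurrence relation type A} using $m_{k,0}=0$, which yields
$$
m_{k,i}=\sum_{i'=1}^{i}\sum_{j=i'}^{n}(-1)^{j-i'}\,m_{k-{j-i'+2\choose 2},\, j+1}.
$$
Every term on the right has the form $m_{k-t,s}$ with $t={j-i'+2\choose 2}$ and $s=j+1$; the constraint $1\le i'\le j\le n$ forces $2\le j-i'+2\le n+1$, so $1\le t\le{n+1\choose 2}$ and each summand indexes a genuine entry of $\mathbf v_{k-1}$.

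The delicate step, and the one I expect to require the most care, is confirming that no coefficient in this telescoped sum falls outside $\{0,\pm 1\}$. Fix a target entry $m_{k-t,s}$ and set $j=s-1$; then a pair $(i',j)$ contributes to this entry precisely when ${j-i'+2\choose 2}=t$. Since ${x\choose 2}$ is strictly increasing in $x\ge 1$, this equation has at most one solution $i'$ for each such $j$, so at most one term of the double sum hits a given position of $\mathbf v_{k-1}$, and its contribution is $(-1)^{j-i'}=\pm 1$. Combining the telescoped top rows with the shift rows below then yields the required $\{0,\pm 1\}$-valued square matrix $\mathcal A$ of size ${n+1\choose 2}(n+1)$, with $\mathcal A\,\mathbf v_{k-1}=\mathbf v_k$ by construction.
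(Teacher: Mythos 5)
Your proof is correct and follows essentially the same strategy as the paper: shift rows for the lower blocks, with the top $n+1$ rows encoding the recurrence of \autoref{C:M recurrence relation type A}. Where the paper exhibits the matrix $\mathcal A$ explicitly (via shifted lower-triangular blocks $sh^{t-1}(L)$ placed at block-columns ${t\choose 2}$) and verifies $\mathcal A\mathbf v_{k-1}=\mathbf v_k$ by row-wise induction, you reach the same top rows by telescoping the recurrence from $m_{k,0}=0$ and checking $\{0,\pm 1\}$-ness directly; your observation that ${x\choose 2}$ is strictly increasing is exactly what guarantees the paper's shifted blocks land in pairwise distinct block-columns, so the two arguments are the same fact in different packaging.
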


\begin{proof}
Just to give an idea of how the matrix $\mathcal A$ looks like, here is $\mathcal A$ when $n=3$:

$$
\mathcal A={\arraycolsep 3pt
\left( \begin{array}{c|c|c|c|c|c}
   {\arraycolsep 3pt \scriptsize \begin{array}{cccc} 0 & 1 & 0 & 0 \\ 0 & 1 & 1 & 0 \\ 0 & 1 & 1 & 1 \\ 0 & 1 & 1 & 1 \end{array}} & {\arraycolsep 3pt \scriptsize \begin{array}{cccc} 0 & 0 & 0 & 0 \\ 0 & 0 & 0 & 0 \\ 0 & 0 & 0 & 0 \\ 0 & 0 & 0 & 0 \end{array}} & {\arraycolsep 3pt \scriptsize \begin{array}{cccc} 0 & 0 & \llap{-}1 & 0 \\ 0 & 0 & \llap{-}1 & \llap{-}1 \\  0 & 0 & \llap{-}1 & \llap{-}1 \\ 0 & 0 & \llap{-}1 & \llap{-}1 \end{array}} & {\arraycolsep 3pt \scriptsize \begin{array}{cccc} 0 & 0 & 0 & 0 \\ 0 & 0 & 0 & 0 \\ 0 & 0 & 0 & 0 \\ 0 & 0 & 0 & 0 \end{array}} & {\arraycolsep 3pt \scriptsize \begin{array}{cccc} 0 & 0 & 0 & 0 \\ 0 & 0 & 0 & 0 \\ 0 & 0 & 0 & 0 \\ 0 & 0 & 0 & 0 \end{array}} & {\arraycolsep 3pt \scriptsize \begin{array}{cccc} 0 & 0 & 0 & 1 \\ 0 & 0 & 0 & 1\\ 0 & 0 & 0 & 1 \\ 0 & 0 & 0 & 1 \end{array}} \\ \hline
     I  & \mathcal O  &  \mathcal O  &  \mathcal O  & \mathcal O & \mathcal O \\  \hline
     \mathcal O  & I  &  \mathcal O  &  \mathcal O  & \mathcal O & \mathcal O \\  \hline
     \mathcal O  & \mathcal O  &  I  &  \mathcal O  & \mathcal O & \mathcal O \\ \hline
     \mathcal O  & \mathcal O  &  \mathcal O  &  I  & \mathcal O & \mathcal O \\ \hline
     \mathcal O  & \mathcal O  &  \mathcal O  &  \mathcal O  & I & \mathcal O
   \end{array}\right)
}
$$

Here $I$ is the $4\times 4$ identity matrix, and $\mathcal O$ is the $4\times 4$ zero matrix.

In general, the matrix $\mathcal A$ can be defined as follows. Let $L$ be the $(n+1)\times (n+1)$ lower triangular matrix whose entries are $\ell_{i,j}=1$ if $i\leq j$ and 0 otherwise. We will shift the columns of $L$ in the following way: Given a matrix $P$, define $sh(P)$ to be the matrix obtained from $P$ by removing its rightmost column and adjoining a zero-column to the left. In other words, $sh(P)$ is obtained from $P$ by shifting its columns one position to the right and inserting zeroes in the first column. When $n=3$ we can repeatedly shift the matrix $L$ to obtain:
$$
L = {\arraycolsep .2em  \tiny \begin{pmatrix} 1 & 0 & 0 & 0 \\ 1 & 1 & 0 & 0 \\ 1 & 1 & 1 & 0 \\ 1 & 1 & 1 & 1 \end{pmatrix} } \quad
  sh(L) = {\arraycolsep .2em\tiny \begin{pmatrix} 0 & 1 & 0 & 0 \\ 0 & 1 & 1 & 0 \\ 0 & 1 & 1 & 1 \\ 0 & 1 & 1 & 1 \end{pmatrix} } \quad
  sh^2(L) = {\arraycolsep .2em \tiny \begin{pmatrix} 0 & 0 & 1 & 0 \\ 0 & 0 & 1 & 1 \\ 0 & 0 & 1 & 1 \\ 0 & 0 & 1 & 1 \end{pmatrix} } \quad
  sh^3(L) = {\arraycolsep .2em \tiny \begin{pmatrix} 0 & 0 & 0 & 1 \\ 0 & 0 & 0 & 1 \\ 0 & 0 & 0 & 1 \\ 0 & 0 & 0 & 1 \end{pmatrix} }
$$

Then $\mathcal A$ is defined as a matrix made of blocks of size $(n+1)\times (n+1)$. The block in position $(i,j)$ with $1\leq i,j\leq {n+1\choose 2}$ is
$$
   \Gamma_{i,j}=\left\{\begin{array}{ll}
   (-1)^{t}sh^{t-1}(L) & \mbox{ if } i=1 \mbox{ and } j={t \choose 2} \mbox{ for some $t$.}\\ \\
   I_{(n+1)\times (n+1)} & \mbox{ if } i=j+1. \\ \\
   \mathcal O_{(n+1)\times (n+1)} & \mbox{ otherwise. } \end{array} \right.
$$
Let us show that $\mathcal A\mathbf v_{k-1} = \mathbf v_k$ for every $k\geq 1$. First, it is clear from the definition of $\mathcal A$ that the $(n+1+j)$th entry of $\mathcal A \mathbf v_{k-1}$ equals the $j$th entry of $\mathbf v_{k-1}$, which is precisely the $(n+1+j)$th entry of $\mathbf v_k$. Hence we just need to prove the equality for the first $n+1$ entries of $\mathcal A \mathbf v_{k-1}$ and $\mathbf v_k$.

We will first see that the first entry of $\mathcal A\mathbf v_{k-1}$ is $m_{k,1}$. Notice that the nonzero entries of the first row of $\mathcal A$ form an alternate sequence of $1$'s and $-1$'s placed at columns
$$
\left[{2\choose 2}-1\right](n+1)+2,\quad \left[{3\choose 2}-1\right](n+1)+3,\quad \ldots \quad \left[{n+1\choose 2}-1\right](n+1)+(n+1).
$$
The entries occupying these positions in the vector $\mathbf v_{k-1}$ are precisely
$$
m_{k-{2\choose 2},2},\quad  m_{k-{3\choose 2},3},\quad \ldots \quad  m_{k-{n+1\choose 2},n+1}.
$$
This implies that the first entry of $\mathcal A \mathbf v_{k-1}$ is $\displaystyle \sum_{j=1}^{n}{(-1)^{j-1}m_{k-{j+1\choose 2},j+1}}$ which by \autoref{C:M recurrence relation type A}  (as $m_{k,0}=0$) is precisely equal to $m_{k,1}$.

Now suppose, by induction, that $2\leq i \leq n$ and that the $(i-1)$st entry of $\mathcal A \mathbf v_{k-1}$ is $m_{k,i-1}$. We will show that the $i$th entry of $\mathcal A \mathbf v_{k-1}$ is $m_{k,i}$ and this will finish the proof.

Notice that the $i$th row of $\mathcal A$ is the sum of the $(i-1)$st row plus a vector whose nonzero entries form an alternate sequence of $1$'s and $-1$'s placed at columns
$$
 \left[{2\choose 2}-1\right](n+1)+(i+1),\quad \left[{3\choose 2}-1\right](n+1)+(i+2),\quad \ldots \quad \left[{n-i+2\choose 2}-1\right](n+1)+(n+1).
$$
(In the case $i=n+1$ all entries of this vector are zero, as the rows $n$ and $n+1$ of $\mathcal A$ are equal.)

The entries occupying these positions in $\mathbf v_{k-1}$ are
$$
   m_{k-{2\choose 2},i+1},\quad  m_{k-{3\choose 2},i+2},\quad \ldots \quad  m_{k-{n-i+2\choose 2},n+1}.
$$
Therefore the $i$th entry of $\mathcal A\mathbf v_{k-1}$ equals
$$
   \left(\mbox{$(i-1)$st row of $\mathcal A$}\right)\mathbf v_{k-1} + \sum_{j=i}^{n}{(-1)^{j-i}m_{k-{j-i+2\choose 2},j+1}}
$$
which by induction hypothesis and by \autoref{C:M recurrence relation type A} equals
$$
 m_{k,i-1}+\sum_{j=i}^{n}{(-1)^{j-i}m_{k-{j-i+2\choose 2},j+1}} = m_{k,i}
$$
as we wanted to show.
\end{proof}

We can now prove our new formula for the growth function of $A_n$, in a similar way in which growth functions of automatic groups are computed from transition matrices of finite state automata.

\begin{theorem}\label{T:typeA} Let $M$ be the Artin--Tits monoid of type $A_n$, that is the positive braid monoid on $n+1$ strands. Let $\mathcal M^{\mathcal A}_n$ be the square matrix of order $n+1$ whose entry $(i,j)$ equals $t^{j-i+1\choose 2}$ whenever $j-i+1\geq 0$, and 0 otherwise. Then
$$
    g_M(t)=|\mathcal M^{\mathcal A}_n|^{-1}
$$
\end{theorem}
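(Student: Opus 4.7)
The plan is to convert the recurrence of \autoref{C:M recurrence relation type A} into a linear system for the column-generating functions, solve it by Cramer's rule, and identify the numerator and denominator determinants by inspection.

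First I would introduce $G_i(t)=\sum_{k\geq 0}m_{k,i}\,t^k$ for $i=0,\dots,n+1$. Using $M_k^{(0)}=\emptyset$ for $k>0$ together with $m_{0,0}=1$ gives $G_0(t)=1$, and the equality $M_k^{(n)}=M_k^{(n+1)}=M_k$ noted before \autoref{L:M in terms of U} gives $g_M(t)=G_{n+1}(t)=G_n(t)$. Multiplying the recurrence of \autoref{C:M recurrence relation type A} by $t^k$, summing over $k\geq 0$ (the identity also holds at $k=0$), and reindexing $l=j+1$, I would arrive at the equations
$$
-G_{i-1}(t)+G_i(t)+\sum_{l=i+1}^{n+1}(-1)^{l-i}\,t^{\binom{l-i+1}{2}}\,G_l(t)=0,\qquad i=1,\dots,n+1.
$$
Moving the known term $G_0=1$ from the first equation to the right-hand side, this becomes a linear system $M\mathbf{G}=\mathbf{e}_1$ in $\mathbf{G}=(G_1,\dots,G_{n+1})^T$, where $\mathbf{e}_1=(1,0,\dots,0)^T$ and $M$ is the $(n+1)\times(n+1)$ matrix with entry $(-1)^{j-i}\,t^{\binom{j-i+1}{2}}$ when $j-i+1\geq 0$ and $0$ otherwise.

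By Cramer's rule, $G_{n+1}(t)=\det(M')/\det(M)$, where $M'$ is $M$ with its last column replaced by $\mathbf{e}_1$. Expanding $\det(M')$ along that last column, only the $(1,n+1)$ cofactor survives; the corresponding minor, obtained by deleting row $1$ and column $n+1$ of $M$, is upper triangular with all diagonal entries equal to $-1$ (in row $r$ the leftmost nonzero entry is the $-1$ in column $r$ coming from the $G_{i-1}$ contribution of the original recurrence). Hence that minor has determinant $(-1)^n$, and the cofactor sign $(-1)^{1+(n+1)}=(-1)^n$ cancels it, giving $\det(M')=1$.

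It remains to identify $\det(M)$ with $|\mathcal M^{\mathcal A}_n|$. For this I would set $D=\mathrm{diag}(1,-1,1,\dots,(-1)^n)$ and check entrywise that $M=D\,\mathcal M^{\mathcal A}_n\,D$: when $j-i+1\geq 0$, the $(i,j)$ entry of $D\,\mathcal M^{\mathcal A}_n\,D$ equals $(-1)^{(i-1)+(j-1)}\,t^{\binom{j-i+1}{2}}=(-1)^{j-i}\,t^{\binom{j-i+1}{2}}$, which matches $M_{ij}$; both are $0$ otherwise. Since $\det(D)^2=1$ this gives $\det(M)=|\mathcal M^{\mathcal A}_n|$, and combined with $\det(M')=1$ yields $g_M(t)=|\mathcal M^{\mathcal A}_n|^{-1}$. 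The main obstacle is the careful sign bookkeeping in both the upper-triangular minor and the diagonal conjugation; the latter is precisely the simplification to an all-positive-entry matrix credited to Komori in the acknowledgements.
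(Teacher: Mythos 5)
Your proof is correct and takes a genuinely different, and leaner, route than the paper's. The paper builds a transfer matrix $\mathcal A$ of order $\binom{n+1}{2}(n+1)$ (\autoref{L:matrix A}), writes $g_{A_n}(t)=\mathbf v(I-\mathcal At)^{-1}\mathbf v_0$, argues that the numerator $\mathbf v\,\mathcal A'\,\mathbf v_0$ is constant by comparing the degree of $|I-\mathcal At|$ with the degree $\binom{n+1}{2}$ of the M\"obius polynomial furnished by \autoref{C:Deligne}, and then applies block column and row operations to collapse $|I-\mathcal At|$ to $|\mathcal M^{\mathcal A}_n|$. You instead turn \autoref{C:M recurrence relation type A} directly into an $(n+1)\times(n+1)$ linear system for the column generating functions $G_i(t)$ and solve it by Cramer's rule. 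This buys two simplifications: the matrix you manipulate has order $n+1$ rather than $\binom{n+1}{2}(n+1)$, and the numerator determinant is shown to equal $1$ by an explicit cofactor computation, so neither \autoref{C:Deligne} nor the degree of the M\"obius polynomial is needed at all. Your conjugation $M=D\,\mathcal M^{\mathcal A}_n\,D$ is a clean packaging of the paper's step of negating alternate rows and then alternate columns (the Komori observation). One point worth stating explicitly: at $t=0$ the matrix $M$ is lower unitriangular, so $\det M$ has constant term $1$ and is therefore a unit in $\Z[[t]]$; this is what guarantees that $M\mathbf G=\mathbf e_1$ has a unique solution over $\Z[[t]]$, so Cramer's rule is legitimate in the power-series ring and indeed recovers $G_{n+1}(t)=g_M(t)$.
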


\begin{proof}
It follows immediately from \autoref{L:matrix A} that $\mathcal A^k\mathbf v_0 = \mathbf v_k $ for all $k\geq 0$. Now notice that the $(n+1)$st entry of $\mathbf v_k$ is precisely $m_{k,n+1}=\alpha_k$. Hence, if we define the row vector $\mathbf v = (0 \cdots 0 \; 1 \; 0 \cdots \cdots 0)$, where the number 1 occupies the $(n+1)$st position, we obtain
$$
     \alpha_k = \mathbf v \mathcal A^k \mathbf v_0
$$
for $k\geq 0$. Therefore
$$
   g_{A_n}(t)= \sum_{k\geq 0} \alpha_k t^k =\sum_{k\geq 0} \left( \mathbf v \mathcal A^k \mathbf v_0 \right) t^k =
     \mathbf v \left(\sum_{k\geq 0} A^k t^k \right) \mathbf v_0 = \mathbf v \left(I-\mathcal At\right)^{-1} \mathbf v_0
$$
Let $\mathcal A'$ be the adjugate matrix of $I-\mathcal At$. We know that $\displaystyle (I-\mathcal At)^{-1}= \frac{\mathcal A'}{|I-\mathcal At|}$, so
$$
    g_{A_n}(t)=\frac{\mathbf v \mathcal A' \mathbf v_0}{|I-\mathcal At|}
$$

We will now show that $\mathbf v\: \mathcal A' \mathbf v_0 =1$.  To achieve this goal, notice that both $\mathbf v\: \mathcal A' \mathbf v_0$ and $|I-\mathcal At|$ are polynomials in $\mathbb Z[t]$. We will show that $|I-\mathcal At|$ has degree ${n+1\choose 2}$, and it follows directly from \autoref{C:Deligne} that $g_{A_n}(t)$ is the inverse of a polynomial of degree ${n+1\choose 2}$. Hence $\mathbf v\: \mathcal A' \mathbf v_0 $ is a constant. From $g_{A_n}(0)=\alpha_0=1$ it follows that the constant is $1$. Therefore:
$$
     g_{A_n}(t)= \frac{1}{|I-\mathcal At|}
$$

Now let us apply column operations to the matrix $I-\mathcal At$ which preserve its determinant. It is easier to explain these operations in terms of blocks. Keep in mind the case $n=3$ as a clarifying example:
$$
   |I-\mathcal At| = \left| \begin{array}{cccccc}
   I-sh(L)t & \mathcal O & sh^2(L)t & \mathcal O & \mathcal O & -sh^3(L)t \\
    -It   & I   &  \mathcal O & \mathcal O & \mathcal O & \mathcal O \\
     \mathcal O    & -It &  I & \mathcal O & \mathcal O & \mathcal O \\
     \mathcal O    & \mathcal O   & -It & I & \mathcal O & \mathcal O \\
     \mathcal O    & \mathcal O   &   \mathcal O  & -It & I & \mathcal O \\
     \mathcal O    & \mathcal O   &   \mathcal O  &  \mathcal O  & -It & I
   \end{array}\right|
$$
If we add to each (block) column the column on its right multiplied by $t$, starting from the right hand side, we obtain
$$
  |I-\mathcal At| = \left| \begin{array}{cccccc}
   I-sh(L)t+sh^2(L)t^3-sh^3(L)t^6 & \star & \star & \star & \star & \star \\
     \mathcal O   & I   &  \mathcal O & \mathcal O & \mathcal O & \mathcal O \\
     \mathcal O    & \mathcal O &  I & \mathcal O & \mathcal O & \mathcal O \\
     \mathcal O    & \mathcal O &  \mathcal O & I & \mathcal O & \mathcal O \\
     \mathcal O    & \mathcal O   &   \mathcal O  & \mathcal O & I & \mathcal O \\
     \mathcal O    & \mathcal O   &   \mathcal O  &  \mathcal O  & \mathcal O & I
   \end{array}\right|
$$
The symbol $\star$ stands for an entry whose value is not important for us, as it does not affect the determinant. Hence
$$
   |I-\mathcal At|=|I-sh(L)t+sh^2(L)t^3-sh^3(L)t^6| =
   \left|\begin{array}{rrrr}
     1 & -t & t^3 & -t^6\\
     0 & 1-t & -t+t^3 & t^3-t^6 \\
     0 & -t & 1-t+t^3 & -t+t^3-t^6 \\
     0 & -t & -t+t^3 & 1-t+t^3-t^6
   \end{array}\right|
$$
Now we substract to each row the previous one, starting from the bottom, and we obtain
$$
  |I-\mathcal At| =    \left|\begin{array}{rrrr}
     1 & -t & t^3 & -t^6 \\
     -1 & 1 & -t & t^3 \\
      0  & -1 & 1 & -t \\
      0  & 0  & -1  & 1
   \end{array}\right|
$$
Finally we point out the alternating signs, all of which can be turned positive by changing the sign of the even rows and then changing the sign of the even columns. As the number of sign changes is even, the final result is
$$
   |I-\mathcal At|= \left|\begin{array}{cccc}
     1 & t & t^3 & t^6 \\
      1 & 1 & t & t^3 \\
      0  & 1 & 1 & t \\
      0  & 0  & 1  & 1
   \end{array}\right|=|\mathcal M^{\mathcal A}_3|
$$
In the case of arbitrary $n$, exactly the same operations lead to the matrix $\mathcal M^{\mathcal A}_n$ defined in the statement.

It remains to show that $|I-\mathcal At|$ is a polynomial of degree ${n+1\choose 2}$. To achieve this, one just needs to expand the determinant $|\mathcal M^{\mathcal A}_n|$ along the first row:
$$
   |\mathcal M^{\mathcal A}_n| = 1 |\mathcal M^{\mathcal A}_{n-1}| - t |\mathcal M^{\mathcal A}_{n-2}| + t^3|\mathcal M^{\mathcal A}_{n-3}|-\cdots +(-1)^{n-1}t^{n\choose 2}|\mathcal M^{\mathcal A}_0| +(-1)^{n}t^{n+1\choose 2}.
$$
By induction in $n$, the $i$th summand ($i\leq n$) has degree ${i\choose 2}+{n-i+1\choose 2}$, which is smaller than ${n+1\choose 2}$, hence the last summand contains the leading term.
\end{proof}

The expansion of the determinant of $\mathcal M^{\mathcal A}_n$ along the first row yields precisely the previously known result by Bronfman~\cite{Bronfman}, relating the M\"obius polynomials of Artin--Tits monoids of type $A_n$, for distinct values of $n$:

\begin{theorem}\label{T:Bronfman}{\rm \cite{Bronfman}}
If the growth function of the Artin--Tits monoid of type $A_n$ is
$$
     g_{A_n}(t)=\frac{1}{H_n(t)}
$$
and we denote $H_{-1}(t)=H_0(t)=1$, then for $n\geq 1$ one has:
$$
     H_n(t)=\sum_{i=1}^{n+1} (-1)^{i-1} {t^{i\choose 2} H_{n-i}(t)}
$$
\end{theorem}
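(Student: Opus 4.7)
The plan is to exploit the identity $H_n(t) = |\mathcal{M}^{\mathcal{A}}_n|$, which follows immediately from \autoref{T:typeA} and the definition of $H_n(t)$, and then to read off the claimed recurrence as the cofactor expansion of this determinant along the first row. The first row of $\mathcal{M}^{\mathcal{A}}_n$ has entries $t^{\binom{j}{2}}$ in position $j$ for $j=1,\ldots,n+1$, so the expansion takes the shape
$$
|\mathcal{M}^{\mathcal{A}}_n| = \sum_{j=1}^{n+1}(-1)^{j-1}\, t^{\binom{j}{2}}\, N_j,
$$
where $N_j$ denotes the minor obtained by deleting the first row and the $j$th column. Matching this to the statement, the whole problem reduces to identifying $N_j$ with $H_{n-j}(t)$ (under the conventions $H_0=H_{-1}=1$).

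Next I would analyze the block structure of the minor matrix. Using the explicit formula for the entries of $\mathcal{M}^{\mathcal{A}}_n$, a direct reindexing shows that the entry in new row $i$ (corresponding to original row $i+1$) and new column $k$, for $k<j$, equals $t^{\binom{k-i}{2}}$ when $i\leq k$ and $0$ otherwise. Hence the top-left $(j-1)\times(j-1)$ block of the minor is upper triangular with $1$'s on the diagonal, and the bottom-left $(n-j+1)\times(j-1)$ block is identically zero. The minor is therefore block triangular, and its determinant collapses to the determinant of the bottom-right $(n-j+1)\times(n-j+1)$ block.

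Finally, I would verify that this bottom-right block is exactly $\mathcal{M}^{\mathcal{A}}_{n-j}$. It consists of original rows and columns indexed by $j+1,\ldots,n+1$; shifting both indices by $j$, its $(i',k')$ entry becomes $t^{\binom{k'-i'+1}{2}}$ when $k'-i'+1\geq 0$ and $0$ otherwise, which matches the definition of $\mathcal{M}^{\mathcal{A}}_{n-j}$ on the nose. Thus $N_j = H_{n-j}(t)$, and substitution into the cofactor expansion yields Bronfman's formula. The only delicate points are the boundary cases $j=n$ and $j=n+1$, where the bottom-right block is $1\times 1$ or empty; these are handled uniformly by the conventions $H_0(t)=H_{-1}(t)=1$, so no separate argument is needed. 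I do not anticipate a genuine obstacle here since the expansion has essentially been set up by the proof of \autoref{T:typeA}; the only care required is the bookkeeping of indices when identifying the inner block with $\mathcal{M}^{\mathcal{A}}_{n-j}$.
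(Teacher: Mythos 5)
Your proposal is correct and is essentially identical to the paper's proof: expand $|\mathcal M^{\mathcal A}_n|$ along the first row and recognize each complementary minor as $|\mathcal M^{\mathcal A}_{n-j}|=H_{n-j}(t)$ (with the empty/$1\times1$ cases handled by the conventions $H_{-1}=H_0=1$). The paper states this in one line, leaning on the cofactor expansion already displayed in the proof of \autoref{T:typeA}; your version simply makes the block-triangular bookkeeping explicit, and your index computations check out.
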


\begin{proof}
By \autoref{T:typeA} one has $H_n(t)=|\mathcal M^{\mathcal A}_n|$ for all $n\geq 1$. The formula in the statement is just the expansion of $|\mathcal M^{\mathcal A}_n|$ along the first row.
\end{proof}

To finish the section, we exhibit some examples of the matrices and the growth functions, for small values of $n$.

{\bf Examples:}
$$
  g_{A_1}(t)=\left|\begin{array}{cc} 1 & t \\ 1 & 1 \end{array}\right|^{-1},
\hspace{1cm}
  g_{A_2}(t)=\left|\begin{array}{ccc} 1 & t & t^3 \\ 1 & 1 & t \\ 0 & 1 & 1 \end{array}\right|^{-1},
$$
$$
g_{A_3}(t)=\left|\begin{array}{cccc} 1 & t & t^3 & t^6 \\ 1 & 1 & t & t^3 \\ 0 & 1 & 1 & t \\
  0 & 0 & 1 & 1 \end{array}\right|^{-1},
\quad  g_{A_4}(t)=\left|\begin{array}{ccccc} 1 & t & t^3 & t^6 & t^{10} \\ 1 & 1 & t & t^3 & t^6 \\ 0 & 1 & 1 & t & t^3 \\ 0 & 0 & 1 & 1 & t \\ 0 & 0 & 0 & 1 & 1\end{array}\right|^{-1}.
$$
In other words,
$$
g_{A_1}(t)=\frac{1}{1-t}, \quad g_{A_2}(t)=\frac{1}{1-2t+t^3}, \quad g_{A_3}(t)=\frac{1}{1-3t+t^2+2t^3-t^6},
$$
$$
g_{A_4}(t)=\frac{1}{1-4t+3t^2+3t^3-2t^4-2t^6+t^{10}}.
$$

\bigskip

\section{Artin--Tits monoids of type B and D}\label{S:B_and_D}

In the previous section we treated the case of Artin--Tits monoids of type A. In this section we will see that the same techniques can be applied to Artin--Tits monoids of type B and D. As the proofs are almost identical, we shall basically provide just the parts in which they differ.

\subsection{Counting elements in the monoid (type B)}

Let $B_n$ be the Artin--Tits monoid of type $B$ with $n$ standard generators. Its standard presentation is the following:
$$
B_n=\left\langle a_1,\ldots,a_n \; \left| \begin{array}{cl}  a_ia_j=a_ja_i, & |i-j|>1 \\ a_ia_ja_i=a_ja_ia_j, & |i-j|=1, \ i,j\neq n \\ a_{n-1}a_{n}a_{n-1}a_{n}=a_{n}a_{n-1}a_{n}a_{n-1} \end{array}   \right.\right\rangle
$$
In this case (see~\cite{Paris}), for $0\leq i\leq j\leq n$, the consecutive generators $a_i,\ldots,a_j$ either generate an Artin--Tits group of type $A_{j-i+1}$ (if $j<n$), or generate an Artin--Tits group of type $B_{j-i+1}$ (if $j=n$). In the latter case, the least common multiple of the generators is~\cite[Lemma 4.1]{Paris2}:
$$
   a_i\vee \cdots \vee a_n= (a_i\cdots a_{n-1} a_n a_{n-1}\cdots a_i)(a_{i+1}\cdots a_{n-1} a_n a_{n-1}\cdots a_{i+1})\cdots (a_{n-1} a_n a_{n-1})a_n.
$$
Therefore, one has
$$
  ||a_{i}\vee \cdots \vee a_j||=\left\{\begin{array}{ll} {j-i+2\choose 2} & \mbox{ if }\ j<n \\[.3cm] (n-i+1)^2 & \mbox{ if }\ j=n
   \end{array}\right.
$$
Also, as it happened in $A_n$, if $t\geq j+2$ the atom $a_t$ commutes with $a_{i},\ldots,a_j$, so one has
$$
   (a_{i}\vee \cdots \vee a_j)\vee a_t = (a_{i}\vee \cdots \vee a_j) a_t.
$$
Therefore we obtain the following result.

\begin{lemma}\label{L:U case B}
Let $M$ be the Artin--Tits monoid of type $B_n$. For $1\leq i \leq  j \leq n$ one has
$$
   u_{k,i,j}= \left\{\begin{array}{ll} m_{k-{j-i+2\choose 2},j+1} & \mbox{if }\ j<n, \\[.5cm] m_{k-(n-i+1)^2,n+1} & \mbox{if }\ j=n. \end{array}\right.
$$
\end{lemma}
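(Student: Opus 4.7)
The plan is to mimic the proof of \autoref{L:U case A} verbatim in the case $j<n$, and then handle $j=n$ as a separate, simpler case in which the condition defining $U_k^{(i,n)}$ involves no forbidden atoms at all.

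First, for $1\le i\le j\le n$, I would rewrite
$$
U_{k}^{(i,j)} = \{b\in M_k;\ a_i\vee\cdots\vee a_j\preccurlyeq b,\ a_{j+2},\ldots,a_n\not\preccurlyeq b\}
$$
using that $a_i,\ldots,a_j\preccurlyeq b$ is equivalent to $a_i\vee\cdots\vee a_j\preccurlyeq b$. By the lattice property, every such $b$ can be written uniquely as $(a_i\vee\cdots\vee a_j)c$ with $c\in M$. The key observation, identical to the type $A$ setting, is that for $t\ge j+2$ we have $|t-s|\ge 2$ for every $s\in\{i,\ldots,j\}$; since the only non-commutation relation in $B_n$ that is not already in $A_n$ is the 4-braid relation between $a_{n-1}$ and $a_n$, which still requires adjacent indices, the atom $a_t$ commutes with each $a_i,\ldots,a_j$, hence with $a_i\vee\cdots\vee a_j$. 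Therefore $a_t\preccurlyeq (a_i\vee\cdots\vee a_j)c$ if and only if $a_t\preccurlyeq c$, by the same left-cancellation argument used in \autoref{L:U case A}.

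For the case $j<n$, the submonoid generated by $a_i,\ldots,a_j$ is Artin--Tits of type $A_{j-i+1}$, so $\|a_i\vee\cdots\vee a_j\|={j-i+2\choose 2}$. By homogeneity and cancellativity, the map $c\mapsto (a_i\vee\cdots\vee a_j)c$ is a bijection between
$$
\{c\in M_{k-{j-i+2\choose 2}};\ a_{j+2},\ldots,a_n\not\preccurlyeq c\}=M_{k-{j-i+2\choose 2}}^{(j+1)}
$$
and $U_{k}^{(i,j)}$, yielding $u_{k,i,j}=m_{k-{j-i+2\choose 2},j+1}$.

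The case $j=n$ is where the proof properly departs from type $A$. Here the forbidden set $\{a_{j+2},\ldots,a_n\}$ is empty, so
$$
U_{k}^{(i,n)} = \{b\in M_k;\ a_i\vee\cdots\vee a_n\preccurlyeq b\}.
$$
Now the submonoid generated by $a_i,\ldots,a_n$ is of type $B_{n-i+1}$, and the formula for the lcm in a type $B$ monoid recalled just before the statement gives $\|a_i\vee\cdots\vee a_n\|=(n-i+1)^2$. Writing every element of $U_k^{(i,n)}$ as $(a_i\vee\cdots\vee a_n)c$ with no further constraint on $c$, cancellativity and homogeneity identify $U_k^{(i,n)}$ with $M_{k-(n-i+1)^2}$, whose cardinal is $m_{k-(n-i+1)^2,n+1}$. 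The only subtle point is bookkeeping the two different exponents according as $j<n$ or $j=n$; no new technique is required beyond what was already developed for type $A$.
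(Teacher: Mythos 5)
Your proof is correct and follows essentially the same route as the paper, whose entire proof is the remark ``Same proof as Lemma~\ref{L:U case A}.'' You have simply unpacked that remark, correctly verifying that the commutation of $a_t$ with $a_i,\ldots,a_j$ for $t\geq j+2$ still holds in type $B_n$ (since the only modified relation remains between adjacent indices) and correctly distinguishing the two lcm-length formulas according to whether the subdiagram is of type $A$ ($j<n$) or type $B$ ($j=n$), with the forbidden-atom set vacuous in the latter case so that $M^{(n+1)}_\ell=M_\ell$.
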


\begin{proof}
Same proof as \autoref{L:U case A}.
\end{proof}

\begin{corollary}\label{C:M recurrence relation type B}
In the Artin--Tits monoid $B_n$, for $k\geq 0$ and $i=1,\ldots,n+1$, one has:
$$
   m_{k,i} = m_{k,i-1} + \left(\sum_{j=i}^{n-1}(-1)^{j-i}m_{k-{j-i+2\choose 2},j+1}\right)+(-1)^{n-i}m_{k-(n-i+1)^2,n+1}
$$
\end{corollary}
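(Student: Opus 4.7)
The plan is to obtain this formula as a direct substitution, exactly as in the proof of \autoref{C:M recurrence relation type A}. By \autoref{L:M in terms of U} applied to $M = B_n$, we have
$$
m_{k,i} = m_{k,i-1} + \sum_{j=i}^{n}(-1)^{j-i}\, u_{k,i,j}
$$
for every $k \geq 0$ and every $i = 1,\ldots,n+1$. The work is then just to replace each $u_{k,i,j}$ by the expression given in \autoref{L:U case B}.

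The single new feature compared to type $A$ is that the top generator $a_n$ behaves differently, so \autoref{L:U case B} splits into two cases: for $i \leq j \leq n-1$ the value is $u_{k,i,j} = m_{k-\binom{j-i+2}{2},\,j+1}$, while for $j = n$ it is $u_{k,i,n} = m_{k-(n-i+1)^2,\, n+1}$. Isolating the $j=n$ term of the sum and substituting yields
$$
m_{k,i} = m_{k,i-1} + \sum_{j=i}^{n-1}(-1)^{j-i} m_{k-\binom{j-i+2}{2},\, j+1} + (-1)^{n-i} m_{k-(n-i+1)^2,\, n+1},
$$
which is precisely the stated formula.

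The only boundary cases to check are $i = n$ (where the sum over $j$ from $i$ to $n-1$ is empty, and the formula reduces to $m_{k,n} = m_{k,n-1} + m_{k-1,n+1}$) and $i = n+1$ (where both the sum and the last term vanish, reducing to $m_{k,n+1} = m_{k,n}$, which holds since $M_k^{(n+1)} = M_k^{(n)}$ by the definition of $M_k^{(i)}$). Since the proof is purely a bookkeeping substitution into \autoref{L:M in terms of U}, there is no genuine obstacle; the only place one could slip is in correctly handling the sign $(-1)^{n-i}$ on the $j=n$ contribution, which must be tracked carefully from the alternating sign $(-1)^{j-i}$ in the sum of \autoref{L:M in terms of U}.
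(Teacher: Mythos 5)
Your proposal is correct and takes exactly the same route as the paper, which proves this corollary verbatim as ``a direct consequence of \autoref{L:M in terms of U} and \autoref{L:U case B}.'' One small clarification about your $i=n+1$ check: when $i=n+1$, the literal formula's last summand $(-1)^{n-i}m_{k-(n-i+1)^2,n+1}$ evaluates to $-m_{k,n+1}$, which does not vanish on its own; rather, the correct reading is that this summand originated as the $j=n$ term of the (empty) sum $\sum_{j=n+1}^{n}$ in \autoref{L:M in terms of U}, so it is absent, and the identity $m_{k,n+1}=m_{k,n}$ then follows directly from $M_k^{(n+1)}=M_k^{(n)}$ as you say. This is a tiny imprecision in the paper's statement rather than a gap in your argument, and your substitution handles all cases $i\leq n$ cleanly.
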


\begin{proof}
This is a direct consequence of \autoref{L:M in terms of U} and \autoref{L:U case B}.
\end{proof}

Using this recurrence relation we can also compute, in the case of $B_n$, a table containing the numbers $m_{k,i}$, where each row is computed from left to right, using the data in the previous ones. In \autoref{F:table B_3} we can see the first seven rows of the table corresponding to $B_3$. We recall that the rightmost column contains the number of elements of length $k$ in $B_3$.

\begin{figure}[ht]
\begin{center}
\begin{tabular}{|@{}c@{}||c|c|c|c|}
\hline $\begin{array}{ccc} & & \\ &  & i \\ & k &  \end{array}$ & 1 & 2 & 3 & 4 \\
\hline \hline 0 & 1 & 1 & 1 & 1 \\
\hline 1 & 1 & 2 & 3 & 3 \\
\hline 2 & 2 & 5 & 8 & 8 \\
\hline 3 & 4 & 12 & 20 & 20 \\
\hline 4 & 9 & 28 & 48 & 48 \\
\hline 5 & 20 & 65 & 113 & 113 \\
\hline 6 & 45 & 150 & 263 & 263 \\
\hline
\end{tabular}
\end{center}
\caption{A table containing $m_{k,i}$ for the Artin--Tits monoid $B_3$, for $k\leq 6$.}
\label{F:table B_3}
\end{figure}

\subsection{A new formula for the growth function (type B)}

We keep proceeding as in the case of Artin--Tits monoids of type $A$, although this time the defined vectors have length $n^2(n+1)$. Namely, we define for $k\geq 1$, the column vector
$$
  \mathbf v_{k-1} = \left(m_{k-1,1}\cdots m_{k-1,n+1} \: m_{k-2,1}\cdots m_{k-2,n+1} \cdots m_{k-n^2,1} \cdots m_{k-n^2,n+1}\right)^t
$$
where the $m_{i,j}$'s are defined for the Artin--Tits monoid $B_n$.

\begin{lemma}\label{L:matrix B}
There is a square matrix $\mathcal B$ with $n^2(n+1)$ rows, whose entries belong to $\{0,1,-1\}$ such that for every $k\geq 1$
$$
      \mathcal B \mathbf v_{k-1} = \mathbf v_k
$$
\end{lemma}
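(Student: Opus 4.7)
The plan is to mimic the construction of $\mathcal A$ in the proof of \autoref{L:matrix A}, modifying only the first block row to reflect the richer recurrence of \autoref{C:M recurrence relation type B}. Write $\mathcal B$ as an $n^2\times n^2$ array of blocks $\Gamma_{r,c}$ of size $(n+1)\times (n+1)$; for $r=2,\dots,n^2$ set $\Gamma_{r,r-1}=I_{n+1}$ and $\Gamma_{r,c}=\mathcal O_{n+1}$ otherwise, exactly as in the type $A$ case, so that these blocks merely shift data and immediately produce the entries of $\mathbf v_k$ at positions $n+2,\dots,n^2(n+1)$.

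The first block row is where types $A$ and $B$ diverge. The recurrence of \autoref{C:M recurrence relation type B} splits into a ``type $A$-like'' sum over $j=i,\dots,n-1$, whose terms always target columns $2,\dots,n$ of the blocks, and a separate ``type $B$'' term $(-1)^{n-i}m_{k-(n-i+1)^2,n+1}$, which lives exclusively in column $n+1$. Accordingly, using the lower triangular matrix $L$ and the shift operator $sh$ introduced in the proof of \autoref{L:matrix A}, I would define $\Gamma_{1,c}$ as a sum of two disjointly-supported contributions: (i)~if $c={t\choose 2}$ for some $t\in\{2,\dots,n\}$, add $(-1)^t sh^{t-1}(L)$ with its last column set to zero (this is the type $A$ construction, truncated because here $j$ stops at $n-1$); (ii)~if $c=m^2$ for some $m\in\{1,\dots,n\}$, add the matrix whose only nonzero column is the last one, containing $(-1)^{m-1}$ in rows $n-m+1,\dots,n+1$ and zero elsewhere (this accumulates, through row $i$, the type $B$ increments $\delta_{i'}$ coming from $i'=n-m+1\le i$). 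Since the two contributions touch disjoint columns within each block, they never conflict, even at positions where both patterns apply (such as $c=1={2\choose 2}=1^2$), and every entry of $\mathcal B$ lies in $\{-1,0,1\}$.

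Verification of $\mathcal B\mathbf v_{k-1}=\mathbf v_k$ then proceeds exactly as in the proof of \autoref{L:matrix A}: rows $n+2$ onwards are immediate from the identity blocks, and rows $1,\dots,n+1$ are handled by induction on $i$, the step from row $i-1$ to row $i$ producing precisely the increment $m_{k,i}-m_{k,i-1}$ dictated by the corollary (with $m_{k,0}=0$, and with rows $n$ and $n+1$ of the first block row coinciding to reflect $m_{k,n+1}=m_{k,n}$). The main obstacle I anticipate is purely notational bookkeeping: one must check that zeroing the last column of $sh^{t-1}(L)$ and appending the tailored type $B$ contribution yields exactly the correct coefficients, and in particular that the cumulative pattern within each shifted block---accumulating the type $A$-like increments for $i'\le i$---meshes correctly with the analogous accumulation of the type $B$ correction. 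No new ideas beyond those in \autoref{L:matrix A} should be needed.
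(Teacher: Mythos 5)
Your proposal is correct and takes essentially the same approach as the paper: a block matrix with identity blocks on the block subdiagonal (pure shifting) and a tailored first block row encoding the recurrence of \autoref{C:M recurrence relation type B}, with the blocks split into a last-column part (the type $B$ correction at block columns $m^2$) and a remaining part (the type $A$-like contribution at block columns $\binom{t}{2}$). In fact your explicit description of the type $B$ contribution -- $(-1)^{m-1}$ in rows $n-m+1,\dots,n+1$ of the last column of block $m^2$, equivalently $(-1)^{m-1}sh^{m}(L)''$, and $\Gamma_1$ at all $t\in\{2,\dots,n\}$ including $t=2$ -- is slightly more careful than the general formula printed in the paper, and it is the version that actually agrees with the paper's displayed $n=3$ matrix.
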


\begin{proof}
As in the previous section, we can indicate how the matrix $\mathcal B$ looks like, by showing it explicitly when $n=3$. As in the case of type A, let us denote (for $n=3$):
$$
sh(L) = {\arraycolsep .2em\tiny \begin{pmatrix} 0 & 1 & 0 & 0 \\ 0 & 1 & 1 & 0 \\ 0 & 1 & 1 & 1 \\ 0 & 1 & 1 & 1 \end{pmatrix} } \quad
  sh^2(L) = {\arraycolsep .2em \tiny \begin{pmatrix} 0 & 0 & 1 & 0 \\ 0 & 0 & 1 & 1 \\ 0 & 0 & 1 & 1 \\ 0 & 0 & 1 & 1 \end{pmatrix} } \quad
  sh^3(L) = {\arraycolsep .2em \tiny \begin{pmatrix} 0 & 0 & 0 & 1 \\ 0 & 0 & 0 & 1 \\ 0 & 0 & 0 & 1 \\ 0 & 0 & 0 & 1 \end{pmatrix} }
$$
This time the last column of these matrices behaves differently, so given a matrix $T$, we will denote $T'$ the matrix obtained from $T$ by replacing its last column by a column of zeroes, and $T''$ the matrix obtained from $T$ by keeping its last column and replacing all other entries by zeroes. Of course, $T'+T''=T$.

As an example, we have:
$$
  sh^2(L) = {\arraycolsep .2em \tiny \begin{pmatrix} 0 & 0 & 1 & 0 \\ 0 & 0 & 1 & 1 \\ 0 & 0 & 1 & 1 \\ 0 & 0 & 1 & 1 \end{pmatrix} }  \quad
  sh^2(L)' = {\arraycolsep .2em \tiny \begin{pmatrix} 0 & 0 & 1 & 0 \\ 0 & 0 & 1 & 0 \\ 0 & 0 & 1 & 0 \\ 0 & 0 & 1 & 0 \end{pmatrix} } \quad
  sh^2(L)'' = {\arraycolsep .2em \tiny \begin{pmatrix} 0 & 0 & 0 & 0 \\ 0 & 0 & 0 & 1 \\ 0 & 0 & 0& 1 \\ 0 & 0 & 0 & 1 \end{pmatrix} } \quad
$$

then the matrix $\mathcal B$ is the following block matrix:

\newlength{\mycolwd}
\settowidth{\mycolwd}{\small \ $-sh^3(U)'$ }

$$
\mathcal B={\arraycolsep 0pt  \small \left( \begin{array}{*{9}{C{\mycolwd}}}
   sh(L) & \mathcal O & -sh^2(L)' & -sh^2(L)'' & \mathcal O & \mathcal O & \mathcal O & \mathcal O & sh^3(L)''\\
     I  & \mathcal O & \mathcal O & \mathcal O & \mathcal O & \mathcal O & \mathcal O & \mathcal O & \mathcal O \\
     \mathcal O  & I & \mathcal O & \mathcal O & \mathcal O & \mathcal O & \mathcal O & \mathcal O & \mathcal O \\
     \mathcal O  & \mathcal O & I & \mathcal O & \mathcal O & \mathcal O & \mathcal O & \mathcal O & \mathcal O \\
     \mathcal O  & \mathcal O & \mathcal O & I & \mathcal O & \mathcal O & \mathcal O & \mathcal O & \mathcal O \\
     \mathcal O  & \mathcal O & \mathcal O & \mathcal O & I & \mathcal O & \mathcal O & \mathcal O & \mathcal O \\
     \mathcal O  & \mathcal O & \mathcal O & \mathcal O & \mathcal O & I & \mathcal O & \mathcal O & \mathcal O \\
     \mathcal O  & \mathcal O & \mathcal O & \mathcal O & \mathcal O & \mathcal O & I & \mathcal O & \mathcal O \\
     \mathcal O  & \mathcal O & \mathcal O & \mathcal O & \mathcal O & \mathcal O & \mathcal O & I & \mathcal O \\
   \end{array}\right)
}
$$
where $I$ is the $4\times 4$ identity matrix and $\mathcal O$ is the $4\times 4$ zero matrix.

For every $n\geq 2$, the $(n+1)\times (n+1)$ blocks of the matrix $\mathcal B$ are defined in the following way. The block in position $(i,j)$ with $1\leq i,j\leq n^2$ is
$$
    B_{i,j}=\Gamma_1+\Gamma_2+\Gamma_3,
$$
where
$$
\begin{array}{l}
  \Gamma_1=\left\{\begin{array}{ll}
   (-1)^{t}sh^{t-1}(L)' & \mbox{ if } i=1 \mbox{ and } j={t \choose 2} \mbox{ for some }2< t\leq n \\ \\
   \mathcal O_{(n+1)\times (n+1)} & \mbox{ otherwise. } \end{array} \right.
\\ \\
  \Gamma_2=\left\{\begin{array}{ll}
   (-1)^{t} sh^{t-1}(L)'' & \mbox{ if } i=1 \mbox{ and } j=t^2 \mbox{ for some }2\leq t\leq n \\ \\
   \mathcal O_{(n+1)\times (n+1)} & \mbox{ otherwise. } \end{array} \right.
\\ \\
  \Gamma_3=\left\{\begin{array}{ll}
   I_{(n+1)\times (n+1)} & \mbox{ if } i=j+1 \\ \\
   \mathcal O_{(n+1)\times (n+1)} & \mbox{ otherwise. } \end{array} \right.
\end{array}
$$

The proof which shows that $\mathcal B\mathbf v_{k-1} = \mathbf v_k$ for every $k\geq 1$ is analogous to that of type $A$, this time using the formula in \autoref{C:M recurrence relation type B}.
\end{proof}

\begin{theorem}\label{T:typeB}
Let $M$ be the Artin--Tits monoid of type $B_n$. Let $\mathcal M^{\mathcal B}_n$ be the square matrix of order $n+1$ whose entry $(i,n+1)$ equals $t^{(n-i+1)^2}$ for all $i$, and whose entry $(i,j)$ for $j\leq n$ equals $t^{j-i+1\choose 2}$ whenever $j-i+1\geq 0$, and 0 otherwise. Then
$$
    g_M(t)=|\mathcal M^{\mathcal B}_n|^{-1}
$$
\end{theorem}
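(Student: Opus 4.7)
The plan is to imitate, almost verbatim, the proof of \autoref{T:typeA}, using the matrix $\mathcal B$ from \autoref{L:matrix B} in place of $\mathcal A$. From $\mathcal B\mathbf v_{k-1}=\mathbf v_k$ I would iterate to obtain $\mathcal B^k\mathbf v_0=\mathbf v_k$ and, picking the row vector $\mathbf v$ with a $1$ in the $(n+1)$st position and $0$ elsewhere, deduce $\alpha_k=\mathbf v\mathcal B^k\mathbf v_0$. Summing the geometric series gives
$$
g_{B_n}(t)=\mathbf v(I-\mathcal Bt)^{-1}\mathbf v_0=\frac{\mathbf v\mathcal B'\mathbf v_0}{|I-\mathcal Bt|},
$$
where $\mathcal B'$ denotes the adjugate of $I-\mathcal Bt$. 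So the task splits into two parts: identify $|I-\mathcal Bt|$ with $|\mathcal M^{\mathcal B}_n|$, and prove that $\mathbf v\mathcal B'\mathbf v_0=1$.

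For the denominator, I would perform the same block column operations as in the type $A$ proof: starting from the rightmost block column of the block matrix $I-\mathcal Bt$, add $t$ times each block column to the block column immediately to its left. This kills the $-It$ entries along the subdiagonal of blocks and produces a block upper--triangular matrix whose determinant equals the determinant of the accumulated top-left $(n+1)\times(n+1)$ block. In the type $A$ case this accumulated block was $I-sh(L)t+sh^2(L)t^3-sh^3(L)t^6+\cdots$. In the present case the accumulation splits into two families: the $sh^t(L)'$ contributions (which never touch the last column) produce exactly the type $A$ pattern in columns $1,\ldots,n$, while the $sh^t(L)''$ contributions (living only in the last column) accumulate powers $t^{t^2}$ for $t=1,\ldots,n$. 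After this, the same row reductions used for type $A$ (subtract each row from the next one from the bottom up), followed by the sign flip on even rows and even columns, turn the first $n$ columns into the type $A$ pattern with entries $t^{j-i+1\choose 2}$ and leave the last column with entries $t^{(n-i+1)^2}$. This is precisely $\mathcal M^{\mathcal B}_n$.

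For the numerator I would use a degree argument identical to the one in the type $A$ proof. By \autoref{C:Deligne}, $g_{B_n}(t)$ is the inverse of a polynomial of degree $\lVert a_1\vee\cdots\vee a_n\rVert=n^2$. Expanding $|\mathcal M^{\mathcal B}_n|$ along its first row gives
$$
|\mathcal M^{\mathcal B}_n|=\sum_{i=1}^{n}(-1)^{i-1}t^{i\choose 2}|\mathcal M^{\mathcal B}_{n-i}|+(-1)^{n}t^{n^2}|\mathcal M^{\mathcal A}_{n-1}|,
$$
and induction on $n$ shows that the first sum has terms of degree $\binom{i}{2}+(n-i)^2<n^2$, whereas the final summand contributes the leading term $t^{n^2}$. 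Hence $|\mathcal M^{\mathcal B}_n|$ has degree exactly $n^2$, forcing $\mathbf v\mathcal B'\mathbf v_0$ to be a constant polynomial. Evaluating at $t=0$ and using $g_{B_n}(0)=\alpha_0=1$ pins this constant down to $1$, completing the proof.

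The step I expect to be trickiest is the careful verification of the column bookkeeping when consolidating $|I-\mathcal Bt|$: the blocks $sh^{t-1}(L)'$ live in columns indexed by $\binom{t}{2}$ while the blocks $sh^{t-1}(L)''$ live in columns indexed by $t^2$, and the shift $sh$ combined with the sign pattern $(-1)^t$ has to interact correctly with the iterative column-addition so that the final last column records powers $t^{(n-i+1)^2}$ and not $t^{\binom{n-i+2}{2}}$. Once that is done, the subsequent row operations and sign flips are identical to the type $A$ manipulation, and the determinant and degree conclusions follow formally.
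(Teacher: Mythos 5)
Your overall strategy is exactly the one the paper follows: set up $g_{B_n}(t)=\mathbf v\mathcal B'\mathbf v_0/|I-\mathcal Bt|$, identify $|I-\mathcal Bt|$ with $|\mathcal M^{\mathcal B}_n|$ via the same block-column consolidation, row subtractions and sign flips, then argue by degree that $\mathbf v\mathcal B'\mathbf v_0$ is the constant $1$. The consolidation bookkeeping you sketch (the primed blocks filling columns $1,\ldots,n$ with the type $A$ pattern, the double-primed blocks depositing powers $t^{t^2}$ in the last column) is correct and matches the paper's computation.

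There is, however, a concrete error in your cofactor expansion. When you delete row $1$ and column $n+1$ from $\mathcal M^{\mathcal B}_n$, the remaining $n\times n$ minor has $(r,c)$ entry $t^{\binom{c-r}{2}}$ for $c\geq r$ and $0$ for $c<r$: it is unit upper triangular, so its determinant is $1$, not $|\mathcal M^{\mathcal A}_{n-1}|$. The correct first-row expansion is therefore
$$
|\mathcal M^{\mathcal B}_n|=\sum_{i=1}^{n}(-1)^{i-1}t^{\binom{i}{2}}|\mathcal M^{\mathcal B}_{n-i}|+(-1)^{n}t^{n^2},
$$
with the convention $|\mathcal M^{\mathcal B}_0|=1$. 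Your subsequent sentence (``the final summand contributes the leading term $t^{n^2}$'') tacitly uses this corrected last term; with the extraneous factor $|\mathcal M^{\mathcal A}_{n-1}|$ kept in, the last summand would have degree $n^2+\binom{n}{2}$ and the argument would be incoherent. With the correction, your induction $\binom{i}{2}+(n-i)^2<n^2$ for $1\leq i\leq n$ is fine and yields degree exactly $n^2$. As a side note, the paper actually performs the degree count by expanding along the \emph{last column}, which gives $|\mathcal M^{\mathcal B}_n|$ in terms of $|\mathcal M^{\mathcal A}_m|$ (whose degrees $\binom{m+1}{2}$ are already known from the type $A$ theorem), and only afterward records your first-row recursion as a separate corollary; both routes are valid and of essentially the same length.
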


\begin{proof}
Using the same arguments as in the proof of \autoref{T:typeA}, one has:
$$
    g_{B_n}(t)=\frac{\mathbf v \mathcal B' \mathbf v_0}{|I-\mathcal Bt|}
$$
Later we will see that $|I-\mathcal Bt|$ is a polynomial of degree $n^2$, and $g_{B_n}(t)$ is the inverse of a polynomial of degree $n^2$ by \autoref{C:Deligne}. Hence $\mathbf v \mathcal B' \mathbf v_0$ is a constant, namely 1 (as $g_{B_n}(0)=\alpha_0=1$).

Therefore:
$$
     g_{B_n}(t)= \frac{1}{|I-\mathcal Bt|}
$$

\settowidth{\mycolwd}{\small $-sh^3(L)''t\;  $}

In the case $n=3$ we have:
$$
|I-\mathcal Bt|={\arraycolsep 0pt   \small \left| \begin{array}{*{9}{C{\mycolwd}}}
  I-sh(L)t & \mathcal O & sh^2(L)'t & sh^2(L)''t & \mathcal O & \mathcal O & \mathcal O & \mathcal O & -sh^3(L)''t \\
  -It  & I & \mathcal O & \mathcal O & \mathcal O & \mathcal O & \mathcal O & \mathcal O & \mathcal O \\
    \mathcal O  & -It  & I & \mathcal O & \mathcal O & \mathcal O & \mathcal O & \mathcal O & \mathcal O \\
    \mathcal O  & \mathcal O & -It  & I & \mathcal O & \mathcal O & \mathcal O & \mathcal O & \mathcal O \\
    \mathcal O  & \mathcal O & \mathcal O & -It  & I & \mathcal O & \mathcal O & \mathcal O & \mathcal O \\
    \mathcal O  & \mathcal O & \mathcal O & \mathcal O & -It  & I & \mathcal O & \mathcal O & \mathcal O \\
    \mathcal O  & \mathcal O & \mathcal O & \mathcal O & \mathcal O & -It  & I & \mathcal O & \mathcal O \\
    \mathcal O  & \mathcal O & \mathcal O & \mathcal O & \mathcal O & \mathcal O & -It  & I & \mathcal O \\
    \mathcal O  & \mathcal O & \mathcal O & \mathcal O & \mathcal O & \mathcal O & \mathcal O & -It  & I \\
   \end{array}\right|
}
$$
As we did for type A, if we add to each (block) column the column on its right multiplied by $t$, starting from the right hand side, we obtain
$$
  |I-\mathcal Bt| = \left| \begin{array}{ccccccccc}
   T & \star & \star & \star & \star & \star & \star & \star & \star \\
     \mathcal O & I & \mathcal O & \mathcal O & \mathcal O & \mathcal O & \mathcal O & \mathcal O & \mathcal O \\
     \mathcal O & \mathcal O & I & \mathcal O & \mathcal O & \mathcal O & \mathcal O & \mathcal O & \mathcal O \\
     \mathcal O & \mathcal O & \mathcal O & I & \mathcal O & \mathcal O & \mathcal O & \mathcal O & \mathcal O \\
     \mathcal O & \mathcal O & \mathcal O & \mathcal O & I & \mathcal O & \mathcal O & \mathcal O & \mathcal O \\
     \mathcal O & \mathcal O & \mathcal O & \mathcal O & \mathcal O & I & \mathcal O & \mathcal O & \mathcal O \\
     \mathcal O & \mathcal O & \mathcal O & \mathcal O & \mathcal O & \mathcal O & I & \mathcal O & \mathcal O \\
     \mathcal O & \mathcal O & \mathcal O & \mathcal O & \mathcal O & \mathcal O & \mathcal O & I & \mathcal O \\
     \mathcal O & \mathcal O & \mathcal O & \mathcal O & \mathcal O & \mathcal O & \mathcal O & \mathcal O & I \\

   \end{array}\right|
$$
where
$$
T= I+\left(-sh(L)'t+sh^2(L)'t^3\right)+ \left(-sh(L)''t+sh^2(L)''t^4-sh^3(L)''t^9\right)
$$
Hence
$$
   |I-\mathcal Bt|=|T| =
   \left|\begin{array}{rrrr}
     1 & -t & t^3 & -t^9 \\
      0 & 1-t & -t+t^3 & t^4-t^9 \\
        0  & -t & 1-t+t^3 & -t+t^4-t^9 \\
            0  & -t & -t+t^3  & 1-t+t^4-t^9
   \end{array}\right|
$$
Notice that the exponents in the last column are of the form $k^2$, while the exponents on the other columns are of the form $k\choose 2$.

Substracting to each row the previous one, starting from the bottom, we obtain
$$
  |I-\mathcal Bt| =    \left|\begin{array}{rrrr}
     1 & -t & t^3 & -t^9 \\
      -1 & 1 & -t & t^4 \\
      0  & -1 & 1 & -t \\
      0  &  0  & -1  & 1
   \end{array}\right|= \left|\begin{array}{cccc}
     1 & t & t^3 & t^9 \\
      1 & 1 & t & t^4 \\
      0  & 1 & 1 & t \\
      0  &  0  & 1  & 1
   \end{array}\right|=|\mathcal M^{\mathcal B}_3|
$$
In the case of arbitrary $n$, exactly the same operations lead to the matrix $\mathcal M^{\mathcal B}_n$ defined in the statement of \autoref{T:typeB}. Hence $|I-\mathcal Bt|=|\mathcal M^{\mathcal B}_n|$.

Notice that $M^{\mathcal B}_n$ equals $M^{\mathcal A}_n$ except for the last column. Hence, if we expand the determinant of $\mathcal M^{\mathcal B}_n$ along the last column, we see that we can express $|\mathcal M^{\mathcal B}_n|$ in terms of $|\mathcal M^{\mathcal A}_t|$ for $t<n$. Namely:
$$
   |\mathcal M_n^{\mathcal B}| = 1 |\mathcal M^{\mathcal A}_{n-1}| - t |\mathcal M^{\mathcal A}_{n-2}| + t^4|\mathcal M^{\mathcal A}_{n-3}|-\cdots +(-1)^{n-1}t^{(n-1)^2}|\mathcal M^{\mathcal A}_0| +(-1)^{n}t^{n^2}.
$$
As $|\mathcal M^{\mathcal A}_t|$ is a polynomial of degree $t\choose 2$, it follows that $|\mathcal M_n^{\mathcal B}|$ is a polynomial of degree $n^2$. This finishes the proof.
\end{proof}

The above expansion of the determinant of $\mathcal M^{\mathcal B}_n$ along the last column, yields a formula relating the M\"obious polynomials of $B_n$ to the M\"obius polynomials of $A_m$ for $m<n$.

\begin{corollary}
If the growth functions of the Artin--Tits monoids of types $A_n$ and $B_n$ are
$$
     g_{A_n}=\frac{1}{H_n(t)}, \qquad g_{B_n}=\frac{1}{F_n(t)}
$$
and we denote $H_{-1}(t)=H_0(t)=1$, then for $n\geq 1$ one has:
$$
     F_n(t)=\sum_{i=0}^{n} (-1)^{i} {t^{i^2} H_{n-1-i}(t)}
$$
\end{corollary}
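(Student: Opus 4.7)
The plan is to derive the identity directly from the cofactor expansion of $|\mathcal M^{\mathcal B}_n|$ along its last column, using the identifications $H_m(t) = |\mathcal M^{\mathcal A}_m|$ from \autoref{T:typeA} and $F_n(t) = |\mathcal M^{\mathcal B}_n|$ from \autoref{T:typeB}. In fact this expansion already appears inside the proof of \autoref{T:typeB} as the display giving $|\mathcal M_n^{\mathcal B}|$ as an alternating sum of $|\mathcal M^{\mathcal A}_t|$'s, so the corollary is essentially a rewriting of that display in the $H_n,F_n$ notation.

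More precisely, the last column of $\mathcal M^{\mathcal B}_n$ has entry $t^{(n-k+1)^2}$ in row $k$. The cofactor expansion yields
$$F_n(t) \;=\; \sum_{k=1}^{n+1} (-1)^{k+n+1}\, t^{(n-k+1)^2}\, \mu_k,$$
where $\mu_k$ is the determinant of the $n\times n$ submatrix obtained by deleting row $k$ and column $n+1$. Since $\mathcal M^{\mathcal B}_n$ and $\mathcal M^{\mathcal A}_n$ coincide on the first $n$ columns, this submatrix is obtained from $\mathcal M^{\mathcal A}_n$. The substantive step is to verify
$$\mu_k \;=\; H_{k-2}(t),$$
with the convention $H_{-1}(t)=1$.

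I would prove this by splitting $\mu_k$ into $2\times 2$ blocks along the line separating the first $k-1$ indices from the last $n+1-k$ indices. The lower-left block is zero: every one of its entries sits at an original position $(r,c)$ with $r\geq k+1$ and $c\leq k-1$, so $c<r-1$, which is below the sub-diagonal of $\mathcal M^{\mathcal A}_n$ where all entries vanish. The upper-left block is the principal $(k-1)\times(k-1)$ submatrix of $\mathcal M^{\mathcal A}_n$, which by the formula defining $\mathcal M^{\mathcal A}_m$ coincides with $\mathcal M^{\mathcal A}_{k-2}$ and therefore has determinant $H_{k-2}(t)$. Finally, after the evident index shift the lower-right block has local entry $t^{\binom{q-p}{2}}$ for $q\geq p$ and $0$ for $q<p$; since $\binom{0}{2}=\binom{1}{2}=0$, this block is upper triangular with $1$'s on the diagonal and contributes determinant $1$. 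Block triangularity yields $\mu_k = H_{k-2}(t)\cdot 1$.

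Setting $i=n-k+1$ then turns $(-1)^{k+n+1}$ into $(-1)^{2(n+1)-i}=(-1)^i$, turns the exponent into $i^2$, and turns $H_{k-2}(t)$ into $H_{n-1-i}(t)$, producing the stated formula. There is no real obstacle: the only nontrivial piece is the block decomposition of $\mu_k$, which is elementary, while the rest is sign and index bookkeeping.
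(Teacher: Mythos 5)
Your proposal is correct and takes essentially the same route as the paper: both arguments expand $|\mathcal M^{\mathcal B}_n|$ by cofactors along the last column and identify the resulting minors with $H_{k-2}(t)=|\mathcal M^{\mathcal A}_{k-2}|$, then reindex. The paper states the expansion in one line inside the proof of \autoref{T:typeB} without spelling out the block-triangular computation of the minors, which you supply in detail.
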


\begin{proof}
By \autoref{T:typeB} one has $F_n(t)=|\mathcal M^{\mathcal B}_n|$ for all $n\geq 1$. The formula in the statement is just the expansion of $|\mathcal M^{\mathcal B}_n|$ along the last column.
\end{proof}

But we can also expand the determinant $|\mathcal M^{\mathcal B}_n|$ along the first row, yielding a formula relating the growth function of $B_n$ to the growth functions of $B_m$ for $m<n$.

\begin{corollary}
If the growth functions of the Artin--Tits monoid of type $B_n$ is
$$
     g_{B_n}=\frac{1}{F_n(t)}
$$
and we denote $F_{-1}(t)=F_0(t)=1$, then for $n\geq 1$ one has:
$$
     F_n(t)=\left(\sum_{i=1}^{n} (-1)^{i-1} {t^{i\choose 2} F_{n-i}(t)}\right)+(-1)^{n}t^{n^2}
$$
\end{corollary}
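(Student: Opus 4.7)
The plan is to invoke \autoref{T:typeB} to identify $F_n(t)$ with $|\mathcal{M}^{\mathcal{B}}_n|$, and then evaluate this determinant by cofactor expansion along its first row. Since that row reads $(1,\; t,\; t^3,\; \ldots,\; t^{n \choose 2},\; t^{n^2})$, the expansion will take the form
$$F_n(t) \;=\; \sum_{j=1}^{n} (-1)^{j-1}\, t^{j \choose 2}\, M_j \;+\; (-1)^{n}\, t^{n^2}\, M_{n+1},$$
where $M_j$ denotes the minor obtained by deleting row $1$ and column $j$ of $\mathcal{M}^{\mathcal{B}}_n$. The claimed recurrence will then drop out once I show that $M_j = F_{n-j}(t)$ for $1 \leq j \leq n$ and that $M_{n+1} = 1$.

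The boundary term is the easier one: $M_{n+1}$ is the submatrix of $\mathcal{M}^{\mathcal{B}}_n$ indexed by rows $2,\ldots,n+1$ and columns $1,\ldots,n$. Its $(k,k)$-entry is the $(k+1,k)$-entry of the original, which equals $t^{0 \choose 2} = 1$, while all entries strictly below that diagonal correspond to positions $(i,j)$ with $j - i + 1 < 0$ and hence vanish. Thus $M_{n+1}$ is upper triangular with ones on the diagonal, and equals $1$.

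For the remaining minors $M_j$ with $1 \leq j \leq n$, the key observation is that column $c$ of $\mathcal{M}^{\mathcal{B}}_n$ (for $c \leq n$) has its only non-zero entries in rows $1,\ldots,c+1$. In particular, once row $1$ is deleted, the first column of $M_j$ is of the form $(1,0,\ldots,0)^{T}$. Expanding along this column contributes $(-1)^{1+1} = +1$ and leaves an $(n-1) \times (n-1)$ submatrix whose first column is column $2$ of the original restricted to rows $3,\ldots,n+1$, which is again of the form $(1,0,\ldots,0)^{T}$. Iterating this expansion $j-1$ times peels off columns $1,\ldots,j-1$ without ever touching the last column, leaving the submatrix of $\mathcal{M}^{\mathcal{B}}_n$ consisting of rows $j+1,\ldots,n+1$ and columns $j+1,\ldots,n+1$. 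A direct comparison of entry formulas, including the last-column entries $t^{((n-j)-i+1)^2}$, identifies this residual submatrix as exactly $\mathcal{M}^{\mathcal{B}}_{n-j}$, so $M_j = F_{n-j}(t)$.

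The main thing to watch will be the bookkeeping at the end of the iteration: one has to verify that no spurious signs enter (each expansion along a column of the form $(1,0,\ldots,0)^{T}$ contributes $+1$) and that the last column is genuinely left intact throughout, so that the residual submatrix matches $\mathcal{M}^{\mathcal{B}}_{n-j}$ including its type-B last column. The conventions $F_0(t) = F_{-1}(t) = 1$ take care of the edge cases $j = n$, in which the residual submatrix is the $1 \times 1$ matrix $(1)$, and $n = 1$, in which only the boundary term survives. Substituting the identified minors back into the cofactor expansion yields the stated recurrence.
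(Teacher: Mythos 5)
Your proposal is correct and takes essentially the same approach as the paper, which proves the corollary by citing \autoref{T:typeB} to write $F_n(t)=|\mathcal M^{\mathcal B}_n|$ and then expanding this determinant along the first row. The only difference is that you fill in the cofactor bookkeeping explicitly (identifying each minor $M_j$ with $F_{n-j}(t)$ and checking the boundary minor $M_{n+1}=1$), steps the paper leaves to the reader.
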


\begin{proof}
By \autoref{T:typeB} one has $F_n(t)=|\mathcal M^{\mathcal B}_n|$ for all $n\geq 1$. Now expand this determinant along the first row.
\end{proof}

As we did with the monoid of type A, we finish the section with some examples.

{\bf Examples:}
$$
  g_{B_1}(t)=\left|\begin{array}{cc} 1 & t \\ 1 & 1 \end{array}\right|^{-1},
\hspace{1cm}
  g_{B_2}(t)=\left|\begin{array}{ccc} 1 & t & t^4 \\ 1 & 1 & t \\ 0 & 1 & 1 \end{array}\right|^{-1},
$$
$$
g_{B_3}(t)=\left|\begin{array}{cccc} 1 & t & t^3 & t^9 \\ 1 & 1 & t & t^4 \\ 0 & 1 & 1 & t \\
  0 & 0 & 1 & 1 \end{array}\right|^{-1},
\quad  g_{B_4}(t)=\left|\begin{array}{ccccc} 1 & t & t^3 & t^6 & t^{16} \\ 1 & 1 & t & t^3 & t^9 \\ 0 & 1 & 1 & t & t^4 \\ 0 & 0 & 1 & 1 & t \\ 0 & 0 & 0 & 1 & 1 \end{array}\right|^{-1}.
$$
In other words,
$$
g_{B_1}(t)=\frac{1}{1-t}, \quad g_{B_2}(t)=\frac{1}{1-2t+t^4}, \quad g_{B_3}(t)=\frac{1}{1-3t+t^2+t^3+t^4-t^9},
$$
$$
g_{B_4}(t)=\frac{1}{1-4t+3t^2+2t^3-t^5-t^6-t^9+t^{16}}.
$$

\bigskip

\subsection{Counting elements in the monoid (type D)}

The standard presentation of the Artin--Tits monoid of type $D$ with $n\geq 4$ generators is the following:
$$
D_n=\left\langle a_1,\ldots,a_n \; \left| \begin{array}{cl}
a_ia_j=a_ja_i, & |i-j|>1,\ \{i,j\}\neq \{n-2,n\} \\ a_ia_ja_i=a_ja_ia_j, & |i-j|=1, \ \{i,j\}\neq \{n-1,n\} \\
a_{n-2}a_n a_{n-2}=a_n a_{n-2} a_n \\ a_{n-1}a_n=a_n a_{n-1}
\end{array}   \right.\right\rangle
$$

In this monoid (see~\cite{Paris}), the submonoid generated by the consecutive standard generators $a_i,\ldots,a_j$ is an Artin--Tits monoid of type $D$ if $j=n$ and $i\leq n-3$, it is $\mathbb Z^2$ if $j=n$ and $i=n-1$, and it is an Artin--Tits monoid of type $A$ otherwise. In the first case, the least common multiple of the generators is~\cite[Lemma 5.1]{Paris2}:
$$
 a_i\vee \cdots \vee a_n = \prod_{j=i}^{n-1}{(a_j a_{j+1}\cdots a_{n-2})(a_{n-1}a_n)(a_{n-2}a_{n-3}\cdots a_j)}.
$$
This word has length $(n-i+1)(n-i)$. We remark that if $i=n-2$ and $j=n$, the least common multiple of $a_{n-2},a_{n-1},a_n$ (which generate a group of type $A_3$) is an element of length $6$, and this number is also equal to $(n-i+1)(n-i)$. And if $i=n-1$ and $j=n$, the generators $a_{n-1}$ and $a_n$ commute, so their least common multiple is $a_{n-1}a_n$ which has length 2, that is, equal to $(n-i+1)(n-i)$. Therefore:
$$
  ||a_{i}\vee \cdots \vee a_j|| =\left\{\begin{array}{ll}
    (n-i+1)(n-i) & \mbox{ if }\ j=n \mbox{ and } i< n  \\[.1cm]
    {j-i+2 \choose 2} & \mbox{ otherwise. }
  \end{array} \right.
$$

On the other hand, if $t\geq j+2$, the atom $a_t$ commutes with $a_{i},\ldots,a_j$, except when $t=n$ and $j=n-2$. In this latter case, the generators $a_i,a_{i+1},\ldots,a_{n-2},a_n$ behave as consecutive generators in a monoid of type $A$. Hence, when $t\geq j+2$:
$$
   (a_{i}\vee \cdots \vee a_j)\vee a_t =\left\{ \begin{array}{ll}
     (a_{i}\vee \cdots \vee a_j)a_t & \mbox{ if }\ j<n-2 \\
     (a_{i}\vee \cdots \vee a_j)a_t a_j a_{j-1}\cdots a_i & \mbox{ if }\ j=n-2 \mbox{ (hence $t=n$)}
   \end{array}\right.
$$
Therefore, in $D_n$ the situation is the following.

\begin{lemma}\label{L:U case D}
Let $M$ be the Artin--Tits monoid of type $D_n$ ($n\geq 4$). For $1\leq i \leq  j \leq n$ one has
$$
   u_{k,i,j}= \left\{\begin{array}{ll}
    m_{k-(n-i+1)(n-i),n+1} & \mbox{ if }\ j=n \mbox{ and } i< n  \\[.2cm]
    m_{k-{n-i\choose 2},n}-m_{k-{n-i+1\choose 2},n+1} & \mbox{ if }\ j=n-2, \\[.2cm]
    m_{k-{j-i+2\choose 2},j+1} & \mbox{ otherwise.}
    \end{array}\right.
$$
\end{lemma}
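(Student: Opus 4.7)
The approach is to adapt the proofs of \autoref{L:U case A} (type $A$) and \autoref{L:U case B} (type $B$): factor every $b \in U_k^{(i,j)}$ uniquely as $b = (a_i \vee \cdots \vee a_j)\, c$, then translate the non-divisibility conditions $a_{j+2}, \ldots, a_n \not\preccurlyeq b$ into conditions on $c$ by left cancellation. What is new in type $D_n$ is that for the exceptional pair $(j, t) = (n-2, n)$ the join $(a_i \vee \cdots \vee a_j) \vee a_t$ is not simply $(a_i \vee \cdots \vee a_j)\, a_t$, so the translation of $a_n \not\preccurlyeq b$ will yield a non-divisibility condition on a longer word in $c$. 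Accordingly, the argument splits into three cases matching the three pieces of the formula in the statement.

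For the first case ($j = n$ and $i < n$): the set $\{a_{j+2}, \ldots, a_n\}$ is empty, so no translation is required; cancellativity and homogeneity give a bijection between $U_k^{(i,n)}$ and $M_{k - (n-i+1)(n-i)}$, hence $u_{k,i,n} = m_{k - (n-i+1)(n-i), n+1}$. For the ``otherwise'' case (i.e.\ $j < n-2$, or $j = n-1$, or $i = j = n$): every excluded atom $a_t$ commutes with all of $a_i, \ldots, a_j$, so $(a_i \vee \cdots \vee a_j) \vee a_t = (a_i \vee \cdots \vee a_j)\, a_t$, and the condition $a_t \not\preccurlyeq b$ becomes simply $a_t \not\preccurlyeq c$; the argument of \autoref{L:U case A} then applies verbatim and yields $u_{k,i,j} = m_{k - \binom{j-i+2}{2}, j+1}$.

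The main work lies in the exceptional case $j = n-2$, where the only excluded atom is $a_n$ and the join formula reads $(a_i \vee \cdots \vee a_{n-2}) \vee a_n = (a_i \vee \cdots \vee a_{n-2})\, a_n a_{n-2} a_{n-3} \cdots a_i$. Using the lattice identity that $a \preccurlyeq b$ and $a' \preccurlyeq b$ iff $a \vee a' \preccurlyeq b$, together with left cancellation, the condition $a_n \preccurlyeq b$ is equivalent to $w \preccurlyeq c$, where $w := a_n a_{n-2} a_{n-3} \cdots a_i$ has length $n - i$. Counting the $c \in M_{k - \binom{n-i}{2}}$ that satisfy $w \not\preccurlyeq c$ by inclusion--exclusion, and using cancellativity once more to identify multiples of $w$ of length $k - \binom{n-i}{2}$ with $M_{k - \binom{n-i}{2} - (n-i)}$, one obtains
$$
u_{k,i,n-2} \;=\; \alpha_{k-\binom{n-i}{2}} \;-\; \alpha_{k - \binom{n-i}{2} - (n-i)}.
$$
The elementary identity $\binom{n-i}{2} + (n-i) = \binom{n-i+1}{2}$ together with $\alpha_\ell = m_{\ell, n} = m_{\ell, n+1}$ rewrites this as the expression in the statement. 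The only genuinely subtle step is the correct invocation of the non-standard join formula; once it is in hand, everything else reduces to cancellativity and homogeneity of $D_n$ applied exactly as in the previous types.
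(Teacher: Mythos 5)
Your proposal is correct and follows essentially the same approach as the paper's proof: the same three-way case split, the same factorization $b = (a_i \vee \cdots \vee a_j)\,c$ with translation of the non-divisibility conditions via left cancellation, and the same use of the exceptional join formula $(a_i \vee \cdots \vee a_{n-2}) \vee a_n = (a_i \vee \cdots \vee a_{n-2})\, a_n a_{n-2} \cdots a_i$ for the $j = n-2$ case, followed by the subtraction $\alpha_{k-\binom{n-i}{2}} - \alpha_{k-\binom{n-i+1}{2}}$.
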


\begin{proof}
If $j=n$ and $i<n$ we have
$$
u_{k,i,j}= \left| \{b\in M_k;\quad a_i\vee \cdots \vee a_n\preccurlyeq b\} \right| = |M_{k-(n-i+1)(n-i)}|=m_{k-(n-i+1)(n-i),n+1}.
$$

Suppose now that $j=n-2$. In this case:
\begin{eqnarray*}
 U_{k}^{(i,n-2)} & = & \{b\in M_k;\quad  a_{i}\vee \cdots \vee a_{n-2} \preccurlyeq b, \quad a_n\not\preccurlyeq b  \} \\
 & = & \{(a_{i}\vee \cdots \vee a_{n-2})c \in M_k;\quad a_n\not\preccurlyeq (a_{i}\vee \cdots \vee a_{n-1})c \}
\end{eqnarray*}
But $a_n\preccurlyeq (a_{i}\vee \cdots \vee a_{n-2})c$ if and only if $(a_{i}\vee \cdots \vee a_{n-2})\vee a_n\preccurlyeq (a_i\vee \cdots \vee a_{n-2})c$, which occurs if and only if $(a_{i}\vee \cdots \vee a_{n-2})\: a_n a_{n-2} a_{n-3}\cdots a_i\preccurlyeq (a_{i}\vee \cdots \vee a_{n-2})c$, which is equivalent to $a_n a_{n-2} a_{n-3}\cdots a_i\preccurlyeq c$. Hence
$$
 U_{k}^{(i,n-2)} = \{(a_{i}\vee \cdots \vee a_{n-2})c \in M_k;\ a_n a_{n-2} a_{n-3}\cdots a_i\not\preccurlyeq c \}.
$$
Therefore
$$
u_{k,i,n-2}=\left|U_{k}^{(i,n-2)} \right|= \left|\{c \in M_{k-{n-i\choose 2}};\ a_n a_{n-2} a_{n-3}\cdots a_i\not\preccurlyeq c \}\right|.
$$
Hence, we must remove from the elements in $M_{k-{n-i\choose 2}}$ those having $a_n a_{n-2} a_{n-3}\cdots a_i$ as prefix. As $D_n$ is a cancellative monoid, there are exactly $\left| M_{k-{n-i\choose 2}-(n-i)}\right|=\left|M_{k-{n-i+1\choose 2}}\right|$ elements to remove. Therefore
$$
u_{k,i,n-2}=\left| M_{k-{n-i\choose 2}}\right| - \left| M_{k-{n-i+1\choose 2}}\right| = m_{k-{n-i\choose 2},n}-m_{k-{n-i+1\choose 2},n+1}
$$
as we wanted to show. The reason of using either $n$ or $n+1$ as second subindex will become clear later.

Finally, in all the remaining cases (either $i=j=n$ or $j\notin \{n-2,n\})$, we have:
$$
  U_{k}^{(i,j)}= \{b\in M_k;\quad  a_{i}\vee \cdots \vee a_j \preccurlyeq b, \quad a_{j+2},\ldots,a_{n}\not\preccurlyeq b \} = m_{k-{j-i+2\choose 2},j+1}
$$
in the same way as in \autoref{L:U case A}.
\end{proof}

\begin{corollary}\label{C:M recurrence relation type D}
In the Artin--Tits monoid $D_n$ ($n\geq 4$), for $k\geq 0$ and $i=1,\ldots,n-2$, one has:
\begin{eqnarray*}
   m_{k,i} = m_{k,i-1} & + & \left(\sum_{j=i}^{n-3}(-1)^{j-i}m_{k-{j-i+2\choose 2},j+1}\right) \\ & + & (-1)^{n-2-i}\left(m_{k-{n-i \choose 2},n}-2\: m_{k-{n-i+1 \choose 2},n+1}+m_{k-(n-i+1)(n-i),n+1}\right),
\end{eqnarray*}
and also:
$$
  m_{k,n-1}= m_{k,n-2} + m_{k-1,n} - m_{k-2,n+1}
$$
$$
  m_{k,n} = m_{k,n-1} + m_{k-1,n+1},
$$
$$
  m_{k,n+1}=m_{k,n}.
$$
\end{corollary}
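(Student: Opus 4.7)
The plan is to derive all four stated formulas as direct substitutions into Proposition \ref{L:M in terms of U}, which already expresses $m_{k,i}$ as $m_{k,i-1} + \sum_{j=i}^n (-1)^{j-i} u_{k,i,j}$. Everything will come from plugging in the three cases of Lemma \ref{L:U case D} for $u_{k,i,j}$ in type $D$, and then tidying up the resulting sum. The only subtlety is that the $j=n-2$ case of Lemma \ref{L:U case D} contributes two terms of the form $m_{\cdot,n}$ and $m_{\cdot,n+1}$, so some signed bookkeeping is needed before the expression matches the one in the statement.

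First I would handle the main case $1\le i\le n-2$. Splitting the sum over $j$ into the range $i\le j\le n-3$ (where the ``otherwise'' branch of Lemma \ref{L:U case D} applies), the value $j=n-2$ (special branch), $j=n-1$ (again ``otherwise''), and $j=n$ (top-row branch), I would substitute each value of $u_{k,i,j}$ with the appropriate sign $(-1)^{j-i}$. The first range reproduces verbatim the sum $\sum_{j=i}^{n-3}(-1)^{j-i}m_{k-\binom{j-i+2}{2},j+1}$ in the statement. The three remaining summands are
\[
(-1)^{n-2-i}\bigl(m_{k-\binom{n-i}{2},n}-m_{k-\binom{n-i+1}{2},n+1}\bigr) + (-1)^{n-1-i} m_{k-\binom{n-i+1}{2},n} + (-1)^{n-i} m_{k-(n-i+1)(n-i),n+1}.
\]
At this point I would use the identity $m_{k,n}=m_{k,n+1}$ (which holds by definition of $M_k^{(n)}$ and $M_k^{(n+1)}$) to rewrite $m_{k-\binom{n-i+1}{2},n}$ as $m_{k-\binom{n-i+1}{2},n+1}$. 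Collecting the three contributions then gives exactly the factor $(-1)^{n-2-i}\bigl(m_{k-\binom{n-i}{2},n}-2m_{k-\binom{n-i+1}{2},n+1}+m_{k-(n-i+1)(n-i),n+1}\bigr)$ stated in the corollary.

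The boundary cases are easier and would be checked in turn. For $i=n-1$, Proposition \ref{L:M in terms of U} gives $m_{k,n-1}=m_{k,n-2}+u_{k,n-1,n-1}-u_{k,n-1,n}$; the ``otherwise'' branch gives $u_{k,n-1,n-1}=m_{k-1,n}$ and the $j=n,\,i<n$ branch gives $u_{k,n-1,n}=m_{k-2,n+1}$, yielding the second displayed formula. For $i=n$, the sum has a single term $u_{k,n,n}$, which falls into the ``otherwise'' case (since $i=n$ fails the condition $i<n$ of the first case) and equals $m_{k-1,n+1}$; this gives $m_{k,n}=m_{k,n-1}+m_{k-1,n+1}$. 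Finally, $m_{k,n+1}=m_{k,n}$ is the trivial endpoint already noted after Proposition \ref{L:M in terms of U}.

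I do not expect any real obstacle: the work is entirely symbolic, and the one place requiring care is the interaction of the $j=n-2,n-1,n$ contributions, where the extra $-1$ from Lemma \ref{L:U case D} and the identity $m_{k,n}=m_{k,n+1}$ must be applied before the $-2$ coefficient in front of $m_{k-\binom{n-i+1}{2},n+1}$ becomes visible. Everything else is a direct substitution.
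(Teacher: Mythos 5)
Your proposal is correct and takes exactly the approach the paper intends: the paper's proof of this corollary is the one-line ``This is a direct consequence of Proposition~\ref{L:M in terms of U} and Lemma~\ref{L:U case D},'' and your write-up simply carries out that substitution in full, correctly handling the $j=n-2,n-1,n$ terms and the use of $m_{\ell,n}=m_{\ell,n+1}$ to produce the coefficient $-2$.
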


\begin{proof}
This is a direct consequence of \autoref{L:M in terms of U} and \autoref{L:U case D}.
\end{proof}

The first formula in \autoref{C:M recurrence relation type D} has an interesting property: It involves no expression of the form $m_{t,n-1}$ for any $t$. Hence, the formulae can be simplified if we remove the column corresponding to $i=n-1$ from the table containing the $m_{k,i}$'s. For that purpose, we define:

\begin{definition}
Let $M$ be the Artin--Tits monoid of type $D_n$. For $k\geq 0$ we define:
$$
 d_{k,i}=\left\{ \begin{array}{ll} m_{k,i} & \ \mbox{ if }\ i\in \{1,\ldots, n-2\}, \\
                                   m_{k,i+1} &  \ \mbox{ if }\ i\in\{ n-1, n\}.
 \end{array}\right.
$$
\end{definition}

We have then defined the numbers $d_{k,1},\ldots,d_{k,n}$, such that $d_{k,n-1}=d_{k,n}=|M_k|=\alpha_k$.  With this definition, the recurrence relation in~\autoref{C:M recurrence relation type D} can be rewritten as follows:

\begin{corollary}\label{C:M second_recurrence relation type D}
In the Artin--Tits monoid $D_n$, for $k\geq 0$ and $i=1,\ldots,n$ one has:
$$
    d_{k,i}=d_{k,i-1}+\left( \sum_{j=i}^{n-1} (-1)^{j-i} d_{k-{j-i+2\choose 2},j+1} \right) +
     (-1)^{n-i-1}\left( d_{k-{n-i+1\choose 2},n}- d_{k-(n-i+1)(n-i),n} \right)
$$
\end{corollary}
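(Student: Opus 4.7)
The plan is to verify this recurrence by straightforward substitution: on the right-hand side, replace each $d_{\ell,j}$ by the corresponding $m_{\ell,j'}$ according to the definition, and check that the resulting identities coincide with the three formulae of \autoref{C:M recurrence relation type D}. I would split the verification into three cases according to where $i$ sits in $\{1,\ldots,n\}$.

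First, for $1\leq i\leq n-2$ I would split the sum $\sum_{j=i}^{n-1}(-1)^{j-i}d_{k-\binom{j-i+2}{2},j+1}$ into three blocks: (a) the indices $j\in\{i,\ldots,n-3\}$, where $j+1\leq n-2$ and $d_{\ell,j+1}=m_{\ell,j+1}$; (b) the term $j=n-2$, where $d_{\ell,n-1}=m_{\ell,n}$, contributing $(-1)^{n-2-i}m_{k-\binom{n-i}{2},n}$; (c) the term $j=n-1$, where $d_{\ell,n}=m_{\ell,n+1}$, contributing $(-1)^{n-1-i}m_{k-\binom{n-i+1}{2},n+1}$. The correction term of the new recurrence rewrites as $(-1)^{n-i-1}\bigl(m_{k-\binom{n-i+1}{2},n+1}-m_{k-(n-i+1)(n-i),n+1}\bigr)$. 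Using $(-1)^{n-i-1}=(-1)^{n-1-i}=-(-1)^{n-2-i}$, the two contributions involving $m_{k-\binom{n-i+1}{2},n+1}$ add up to $-2(-1)^{n-2-i}m_{k-\binom{n-i+1}{2},n+1}$, and the last piece becomes $(-1)^{n-2-i}m_{k-(n-i+1)(n-i),n+1}$. Collecting, we recover exactly the first displayed formula of \autoref{C:M recurrence relation type D}.

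Next, for $i=n-1$ the sum reduces to the single term $j=n-1$ and contributes $d_{k-1,n}$, while the correction term equals $(-1)^{0}(d_{k-1,n}-d_{k-2,n})=d_{k-1,n}-d_{k-2,n}$. Thus the new recurrence reads $d_{k,n-1}=d_{k,n-2}+2d_{k-1,n}-d_{k-2,n}$, i.e.\ $m_{k,n}=m_{k,n-2}+2m_{k-1,n+1}-m_{k-2,n+1}$. On the other hand, chaining the second and third formulae of \autoref{C:M recurrence relation type D} gives $m_{k,n}=m_{k,n-1}+m_{k-1,n+1}=m_{k,n-2}+m_{k-1,n}-m_{k-2,n+1}+m_{k-1,n+1}$, and using $m_{k-1,n}=m_{k-1,n+1}$ (the last line of that corollary) the two expressions coincide. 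Finally, for $i=n$ the sum is empty and the bracketed correction vanishes because $\binom{1}{2}=0=(n-n+1)(n-n)$; thus the new recurrence degenerates to $d_{k,n}=d_{k,n-1}$, which is nothing but the last line $m_{k,n+1}=m_{k,n}$.

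I do not expect any real conceptual obstacle here; the only delicate point is bookkeeping of signs, specifically verifying that the coefficient $-2(-1)^{n-2-i}$ in front of $m_{k-\binom{n-i+1}{2},n+1}$ in the original formula is correctly produced by summing the $j=n-1$ contribution of the sum with the first term of the new correction. Once that sign arithmetic is carried out, the whole statement is a rewriting of \autoref{C:M recurrence relation type D} in terms of the compressed index $d$, and amounts to eliminating the redundant column $m_{k,n-1}$ which never appears as a recursion input on the right-hand side.
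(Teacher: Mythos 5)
Your proof is correct and follows essentially the same route as the paper's: translate $d$ back to $m$ via the definition, then match the result against the three displayed formulae of \autoref{C:M recurrence relation type D}, treating $i\le n-2$, $i=n-1$, and $i=n$ separately. The only difference is cosmetic — the paper dismisses the case $i\le n-2$ as a ``direct consequence'' while you carry out the sign bookkeeping explicitly, which you do correctly.
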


\begin{proof}
For $i=1,\ldots, n-2$, this is a direct consequence of \autoref{C:M recurrence relation type D}.

For $i=n-1$, we have (using \autoref{C:M recurrence relation type D} and the fact that $m_{k-1,n}=m_{k-1,n+1}$):
\begin{eqnarray*}
  d_{k,n-1} = m_{k,n} & = & m_{k,n-1}+m_{k-1,n+1} \\
  & = & (m_{k,n-2} + m_{k-1,n} - m_{k-2,n+1}) +m_{k-1,n+1} \\
  & = & m_{k,n-2} + 2\: m_{k-1,n+1} - m_{k-2,n+1} \\
  & = & d_{k,n-2} + 2\: d_{k-1,n} - d_{k-2,n}
\end{eqnarray*}
which satisfies the statement.

Finally, if $i=n$:
$$
d_{k,n}=m_{k,n+1}=m_{k,n}=d_{k,n-1}= d_{k,n-1}- (d_{k,n}-d_{k,n})
$$
so the result also holds in this case.
\end{proof}

Thanks to \autoref{C:M second_recurrence relation type D} we can compute a table containing the numbers $d_{k,i}$ for $k\geq 0$ and $i=1,\ldots,n$. Notice that the rightmost column contains the numbers $d_{k,n}$ for $k\geq 0$, which correspond to the number of elements of length $k$, with respect to the standard generators of $D_n$.

\begin{figure}[ht]
\begin{center}
\begin{tabular}{|@{}c@{}||c|c|c|c|}
\hline $\begin{array}{ccc} & & \\ &  & i \\ & k &  \end{array}$ & 1 & 2 & 3 & 4 \\
\hline \hline 0 & 1 & 1 & 1 & 1 \\
\hline 1 & 1 & 2 & 4 & 4 \\
\hline 2 & 2 & 6 & 13 & 13 \\
\hline 3 & 5 & 16 & 38 & 38 \\
\hline 4 & 12 & 42 & 105 & 105 \\
\hline 5 & 29 & 108 & 280 & 280 \\
\hline 6 & 72 & 277 & 732 & 732 \\
\hline
\end{tabular}
\end{center}
\caption{A table containing the numbers $d_{k,i}$ for the Artin--Tits monoid $D_4$, for $k\leq 6$.}
\label{F:table D_4}
\end{figure}

%
%
%
%

\subsection{A new formula for the growth function (type D)}

The linear recurrence obtained in the case of Artin--Tits monoids of type $D$ forces us to define vectors with $n^2(n-1)$ entries. Namely, we define for $k\geq 1$ the column vector
$$
  \mathbf v_{k-1} = \left(d_{k-1,1}\cdots d_{k-1,n} \: d_{k-2,1}\cdots d_{k-2,n} \cdots d_{k-n(n-1),1} \cdots d_{k-n(n-1),n}\right)^t
$$

\begin{lemma}\label{L:matrix D}
There is a square matrix $\mathcal D$ with $n^2(n-1)$ rows, whose entries belong to $\{0,1,-1\}$ such that for every $k\geq 1$
$$
      \mathcal D \mathbf v_{k-1} = \mathbf v_k
$$
\end{lemma}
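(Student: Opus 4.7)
The plan is to mimic the block-matrix constructions of \autoref{L:matrix A} and \autoref{L:matrix B}. I would define $\mathcal{D}$ as an $n(n-1)\times n(n-1)$ block matrix whose blocks have size $n\times n$, so that $\mathcal{D}$ has $n^{2}(n-1)$ rows as claimed. Below the first block-row I take the simple shift structure: the $(i,i-1)$-th block is the $n\times n$ identity $I$ for $2\le i\le n(n-1)$, and every other block off this subdiagonal is zero. This automatically ensures that the last $n(n-1)-1$ block-rows of $\mathcal{D}\mathbf{v}_{k-1}$ reproduce the first $n(n-1)-1$ block-rows of $\mathbf{v}_{k-1}$, which is precisely what those entries of $\mathbf{v}_{k}$ are meant to be.

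All the combinatorial content is concentrated in the first block-row, which has to produce the vector $(d_{k,1},\ldots,d_{k,n})^{t}$ when applied to $\mathbf{v}_{k-1}$. To find it I would unroll the recurrence of \autoref{C:M second_recurrence relation type D} (using $d_{k,0}=0$) to express each $d_{k,i}$ purely in terms of past quantities:
\[
d_{k,i}=\sum_{s=1}^{i}\left[\sum_{j=s}^{n-1}(-1)^{j-s}\,d_{k-\binom{j-s+2}{2},\,j+1}+(-1)^{n-s-1}\bigl(d_{k-\binom{n-s+1}{2},\,n}-d_{k-(n-s+1)(n-s),\,n}\bigr)\right].
\]
The contributions with shift $\binom{t}{2}$ feeding into columns $j+1\le n-1$ are encoded exactly as in type $A$, by placing shifted copies $(-1)^{t}sh^{t-1}(L)$ of the $n\times n$ lower-triangular matrix $L$ of $1$'s in block-column $\binom{t}{2}$. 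The new type-$D$-specific contributions at shift $(n-s+1)(n-s)$ live only in the last column of the block, so they are encoded by columns of $\pm 1$'s in the last column of the appropriate block, in the spirit of the ``type-$B$ last column'' construction of \autoref{L:matrix B}. The largest shift appearing is $n(n-1)$, attained at $s=1$, which is why $\mathbf{v}_{k-1}$ records exactly $n(n-1)$ past time slices.

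The main obstacle, and the reason type $D$ is not a literal combination of types $A$ and $B$, is that the shift $\binom{n-s+1}{2}$ occurs \emph{twice} in the unrolled expression above, once in the first sum (the term $j=n-1$) and once in the first parenthesised term. Together they contribute $2(-1)^{n-s-1}\,d_{k-\binom{n-s+1}{2},\,n}$, so a naive encoding would introduce $\pm 2$ entries in $\mathcal{D}$, violating the $\{0,1,-1\}$ condition. I would resolve this by invoking the identity $d_{k,n-1}=d_{k,n}$, which holds for every $k$ by the very definition of the numbers $d_{k,i}$: one of the two offending copies of $d_{k-\binom{n-s+1}{2},\,n}$ is routed instead into the entry $d_{k-\binom{n-s+1}{2},\,n-1}$ of $\mathbf{v}_{k-1}$. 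This splits the $\pm 2$ coefficient into two $\pm 1$ coefficients acting on distinct entries of $\mathbf{v}_{k-1}$, so that all entries of $\mathcal{D}$ land in $\{0,1,-1\}$.

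The final verification $\mathcal{D}\mathbf{v}_{k-1}=\mathbf{v}_{k}$ is then carried out block-row by block-row. The lower $n(n-1)-1$ block-rows are immediate from the shift structure, and for the first block-row I would argue by induction on $i=1,\ldots,n$, as in the last part of the proof of \autoref{L:matrix A}, checking that the $i$-th entry of $\mathcal{D}\mathbf{v}_{k-1}$ agrees with the $i$-th entry produced by the unfolded recurrence above.
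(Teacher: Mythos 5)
Your block structure (identity subdiagonal, first block-row built from $sh^{t-1}(L)$ with the last column treated separately as in type $B$, row-by-row verification) matches the paper's, and you have correctly noticed something the paper glosses over: the shift $\binom{n-s+1}{2}$ appears twice in the unrolled recurrence, so a naive transcription of \autoref{C:M second_recurrence relation type D} puts coefficient $\pm 2$ on the entry $d_{k-\binom{n-s+1}{2},\,n}$ of $\mathbf v_{k-1}$.

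Your fix, however, does not restore the $\{0,1,-1\}$ property. Set $t=n-s+1$. The entry $d_{k-\binom{t}{2},\,n-1}$ already receives a contribution $(-1)^{t-2}$ from the inner sum, namely the term $j=n-2$ at index $s'=n-t=s-1$; re-routing the extra copy at index $s$ onto that same entry therefore produces the coefficient $2(-1)^{t-2}$ in every row $i\geq s$ (for $t\leq n-1$). The $\pm2$ has simply migrated from column $n$ to column $n-1$, and the only available identity $d_{\ell,n-1}=d_{\ell,n}$ cannot remove it: for instance, in row $2$ of $\mathcal D$ the total weight that must be distributed across the two dependent entries $d_{k-\binom{n-1}{2},\,n-1}$ and $d_{k-\binom{n-1}{2},\,n}$ is $\pm3$ (one from $j=n-2$ at $s=1$, one from $j=n-1$ at $s=2$, one from the extra at $s=2$, all of the same sign), which cannot be written as a sum of two elements of $\{-1,0,1\}$. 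There is also an independent collision you did not mention: $\binom{4}{2}=3\cdot 2=6$, so for $n\geq4$ the block encoding the extra positive term ($\Gamma_2$ in the paper) and the block encoding the extra negative term ($\Gamma_3$) overlap, which is precisely why the paper's displayed matrix for $n=4$ contains the entries $\pm2$ and $3$ in its first block-row. The conclusion is that the hypothesis ``entries belong to $\{0,1,-1\}$'' in the statement of the lemma is simply not established (the paper's own $\Gamma_2=(-1)^t\,2\,sh^{t-1}(L)''$ already has a $\pm2$); it was evidently carried over from types $A$ and $B$, where the absence of such collisions makes it true. This does not affect anything downstream: \autoref{T:typeD} uses only the existence of an integer matrix $\mathcal D$ with $\mathcal D\mathbf v_{k-1}=\mathbf v_k$, and that part is proved correctly by both your construction and the paper's.
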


\begin{proof}
This proof is analogous to those of type $A$ and $B$. One can see the matrix $\mathcal D$ as a squared block matrix, with $n(n-1)$ rows, each block made of $n\times n$ matrices. In the case $n=4$, the first row of blocks is the following:
$$
\arraycolsep 2pt
\scriptsize
\begin{array}{|rrrr|rrrr|rrrr|rrrr|rrrr|rrrr|rrrr|rrrr|rrrr|rrrr|rrrr|rrrr|}
\hline
0 & 1 & 0 & 0 &    0 & 0 & 0 & 0 &   0 & 0 & \llap{-}1 & 0 &
0 & 0 & 0 & 0 &    0 & 0 & 0 & 0 &   0 & 0 & 0 & 2 &
0 & 0 & 0 & 0 &    0 & 0 & 0 & 0 &   0 & 0 & 0 & 0 &
0 & 0 & 0 & 0 &    0 & 0 & 0 & 0 &   0 & 0 & 0 & \llap{-}1
\\
0 & 1 & 1 & 0 &    0 & 0 & 0 & 0 &   0 & 0 & \llap{-}1 & \llap{-}2 &
0 & 0 & 0 & 0 &    0 & 0 & 0 & 0 &   0 & 0 & 0 & 3 &
0 & 0 & 0 & 0 &    0 & 0 & 0 & 0 &   0 & 0 & 0 & 0 &
0 & 0 & 0 & 0 &    0 & 0 & 0 & 0 &   0 & 0 & 0 & \llap{-}1
\\
0 & 1 & 1 & 2 &    0 & 0 & 0 & \llap{-}1 &   0 & 0 & \llap{-}1 & \llap{-}2 &
0 & 0 & 0 & 0 &    0 & 0 & 0 & 0 &   0 & 0 & 0 & 3 &
0 & 0 & 0 & 0 &    0 & 0 & 0 & 0 &   0 & 0 & 0 & 0 &
0 & 0 & 0 & 0 &    0 & 0 & 0 & 0 &   0 & 0 & 0 & \llap{-}1
\\
0 & 1 & 1 & 2 &    0 & 0 & 0 & \llap{-}1 &   0 & 0 & \llap{-}1 & \llap{-}2 &
0 & 0 & 0 & 0 &    0 & 0 & 0 & 0 &   0 & 0 & 0 & 3 &
0 & 0 & 0 & 0 &    0 & 0 & 0 & 0 &   0 & 0 & 0 & 0 &
0 & 0 & 0 & 0 &    0 & 0 & 0 & 0 &   0 & 0 & 0 & \llap{-}1
 \\ \hline
\end{array}
$$
And the remaining rows are made of identity and zero matrices, with the identities placed at the subdiagonal.

Recall the definition of $sh^k(L)$ for $k>0$. As for type $B$, the last column of these matrices behaves differently than in the case of type $A$, so given a matrix $T$, we denote $T'$ the matrix obtained from $T$ by replacing its last column by a column of zeroes, and $T''$ the matrix obtained from $T$ by keeping its last column and replacing all other entries by zeroes. Then, for every $n\geq 2$, the $n\times n$ blocks of the matrix $\mathcal D$ are defined in the following way.

The block in position $(i,j)$ with $1\leq i,j\leq n(n-1)$ is
$$
   D_{i,j}= \Gamma_1+\Gamma_2+\Gamma_3+\Gamma_4
$$
where
$$
\begin{array}{l}
  \Gamma_1=\left\{\begin{array}{ll}
   (-1)^{t}sh^{t-1}(L)' & \mbox{ if } i=1 \mbox{ and } j={t \choose 2} \mbox{ for some }2\leq t < n \\ \\
   \mathcal O_{n\times n} & \mbox{ otherwise. } \end{array} \right.
\\ \\
  \Gamma_2=\left\{\begin{array}{ll}
   (-1)^{t}\: 2\: sh^{t-1}(L)'' & \mbox{ if } i=1 \mbox{ and } j={t \choose 2} \mbox{ for some }2\leq t\leq n \\ \\
   \mathcal O_{n\times n} & \mbox{ otherwise. } \end{array} \right.
\\ \\
  \Gamma_3=\left\{\begin{array}{ll}
   (-1)^{t-1}\: sh^{t-1}(L)'' & \mbox{ if } i=1 \mbox{ and } j={t(t-1)} \mbox{ for some }2\leq t\leq n \\ \\
   \mathcal O_{n\times n} & \mbox{ otherwise. } \end{array} \right.
\\ \\
  \Gamma_4=\left\{\begin{array}{ll}
   I_{n\times n} & \mbox{ if } i=j+1 \\ \\
   \mathcal O_{n\times n} & \mbox{ otherwise. } \end{array} \right.
\end{array}
$$
This description follows directly from the formula in \autoref{C:M second_recurrence relation type D}.
\end{proof}

\begin{theorem}\label{T:typeD}
Let $M$ be the Artin--Tits monoid of type $D_n$, for $n\geq 4$. Let $\mathcal M^{\mathcal D}_n$ be the square matrix of order $n$ whose entry $(i,n)$ equals $2t^{n-i+1\choose 2}-t^{(n-i+1)(n-i)}$ for all $i$, and whose entry $(i,j)$ for $j<n$ equals $t^{j-i+1\choose 2}$ whenever $j-i+1\geq 0$, and 0 otherwise. Then
$$
    g_M(t)=|\mathcal M^{\mathcal D}_n|^{-1}
$$
\end{theorem}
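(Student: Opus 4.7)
The plan is to mimic the proofs of \autoref{T:typeA} and \autoref{T:typeB}, now using the $n^2(n-1)\times n^2(n-1)$ transition matrix $\mathcal{D}$ supplied by \autoref{L:matrix D}. Let $\mathbf{v}$ be the row vector with a $1$ in the $n$-th position and $0$ elsewhere; since the $n$-th entry of $\mathbf{v}_k$ is $d_{k,n}=\alpha_k$, we have $\alpha_k = \mathbf{v}\mathcal{D}^k \mathbf{v}_0$, and therefore
\[
g_M(t) \;=\; \mathbf{v}(I - \mathcal{D}t)^{-1}\mathbf{v}_0 \;=\; \frac{\mathbf{v}\,\mathrm{adj}(I-\mathcal{D}t)\,\mathbf{v}_0}{|I - \mathcal{D}t|}.
\]
Once I verify that $|I - \mathcal{D}t|$ is a polynomial of degree $n(n-1)$, \autoref{C:Deligne} (which gives $g_M(t)$ as the inverse of a polynomial of degree $\|\vee\mathcal{A}\|=n(n-1)$) forces the numerator to be a nonzero constant; evaluating at $t=0$ yields $g_M(0)=\alpha_0=1$, so this constant equals $1$ and $g_M(t) = |I - \mathcal{D}t|^{-1}$.

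The remaining task is to reduce $|I - \mathcal{D}t|$ to $|\mathcal{M}^{\mathcal{D}}_n|$ by the same determinant-preserving operations as for types $A$ and $B$. First I would add to each block column the block column on its right multiplied by $t$, working from right to left; the subdiagonal $-It$ blocks cancel, and all polynomial content concentrates in the first block column, whose only nontrivial block is
\[
T \;=\; I \;+\; \sum_{s=2}^{n-1}(-1)^{s-1}\, t^{\binom{s}{2}}\,sh^{s-1}(L)' \;+\; \sum_{s=2}^{n}(-1)^{s-1}\Bigl(2\,t^{\binom{s}{2}} - t^{s(s-1)}\Bigr)\,sh^{s-1}(L)''.
\]
Hence $|I - \mathcal{D}t| = |T|$. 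Next, I subtract from each row the previous one, starting from the bottom, which collapses the alternating sums into single monomials, and finally flip signs on alternating rows and columns (an even number of flips in total, so the determinant is preserved) to make every entry positive. The resulting $n\times n$ matrix will be precisely $\mathcal{M}^{\mathcal{D}}_n$.

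The genuinely new feature compared with types $A$ and $B$ is the shape of the last column. The blocks $\Gamma_2$ (at block columns $j=\binom{s}{2}$, with coefficient $(-1)^s\cdot 2$) and $\Gamma_3$ (at block columns $j=s(s-1)$, with coefficient $(-1)^{s-1}$) contribute only through $sh^{s-1}(L)''$, i.e.\ only through the last column of each block; the column operations combine their contributions into the composite monomials $2\,t^{\binom{s}{2}} - t^{s(s-1)}$ visible in $T$. After the row operations and sign flips these produce, in position $(i,n)$ of the reduced matrix, exactly the entry $2t^{\binom{n-i+1}{2}} - t^{(n-i+1)(n-i)}$ claimed in the statement, whereas the columns $j<n$, on which only $sh^{s-1}(L)'$ acts, reduce exactly as in type $A$. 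The degree bound $\deg|\mathcal{M}^{\mathcal{D}}_n| = n(n-1)$ can be checked by expanding along the last column: the $(1,n)$ entry supplies the leading monomial $\pm t^{n(n-1)}$ multiplied by an upper triangular minor of determinant $1$, and no other summand in the expansion reaches this degree (the only permutation of the remaining rows and columns giving a nonzero contribution of any kind yields total degree $0$ on those rows, by the parity argument $\sum(\sigma(i)-i+1)=0$). The main obstacle I anticipate is the careful bookkeeping needed to track the $\Gamma_2$ and $\Gamma_3$ contributions simultaneously through the column, row, and sign operations, but once that is handled the argument parallels the type $B$ proof step by step.
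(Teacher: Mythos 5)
Your proposal follows the paper's approach essentially verbatim: set up the transfer matrix $\mathcal D$ from \autoref{L:matrix D}, invoke \autoref{C:Deligne} together with the claim that $\deg|I-\mathcal Dt|=n(n-1)$ to force the numerator $\mathbf v\,\mathrm{adj}(I-\mathcal Dt)\,\mathbf v_0$ to be the constant $1$, and then reduce $|I-\mathcal Dt|$ to $|\mathcal M^{\mathcal D}_n|$ by the same right-to-left block-column accumulation, bottom-up row subtraction, and alternating sign flips used for types $A$ and $B$, with your explicit closed form for the surviving block $T$ matching the paper's $n=4$ display and correctly combining the $\Gamma_2$ and $\Gamma_3$ contributions into the monomials $2t^{\binom{s}{2}}-t^{s(s-1)}$. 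The only place where your argument is vaguer than the paper's is the degree bound: the paper proves it cleanly by noting that $\mathcal M^{\mathcal D}_n$ differs from $\mathcal M^{\mathcal A}_{n-1}$ only in the last column and expanding along that column to get $|\mathcal M^{\mathcal D}_n|=\sum_{j=1}^n(-1)^{j-1}\bigl(2t^{\binom{j}{2}}-t^{j(j-1)}\bigr)|\mathcal M^{\mathcal A}_{n-j-1}|$, where the known degree $\binom{t}{2}$ of $|\mathcal M^{\mathcal A}_t|$ makes the $j=n$ term visibly dominant, whereas your parenthetical parity remark only checks the $(1,n)$ cofactor and asserts, rather than argues, that the remaining cofactors cannot catch up.
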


\begin{proof}
Following the lines of \autoref{T:typeA} and \autoref{T:typeB}, one has:
$$
    g_{D_n}(t)=\frac{\mathbf v \mathcal D' \mathbf v_0}{|I-\mathcal Dt|}
$$
Later we will see that $|I-\mathcal Dt|$ is a polynomial of degree $n(n-1)$, and $g_{D_n}(t)$ is the inverse of a polynomial of degree $n(n-1)$ by \autoref{C:Deligne}. Hence $\mathbf v \mathcal D' \mathbf v_0$ is a constant, namely 1.

Therefore:
$$
     g_{D_n}(t)= \frac{1}{|I-\mathcal Dt|}
$$

As we did for types A and B, if we add to each (block) column the column on its right multiplied by $t$, starting from the right hand side, we obtain a matrix which for $n=4$ looks like the following:
$$
  |I-\mathcal Dt| = \left| \begin{array}{cccccccccccc}
   T & \star & \star & \star & \star & \star & \star & \star & \star & \star & \star & \star\\
     \mathcal O & I & \mathcal O & \mathcal O & \mathcal O & \mathcal O & \mathcal O & \mathcal O & \mathcal O & \mathcal O & \mathcal O & \mathcal O \\
     \mathcal O & \mathcal O & I & \mathcal O & \mathcal O & \mathcal O & \mathcal O & \mathcal O & \mathcal O & \mathcal O & \mathcal O & \mathcal O \\
     \mathcal O & \mathcal O & \mathcal O & I & \mathcal O & \mathcal O & \mathcal O & \mathcal O & \mathcal O & \mathcal O & \mathcal O & \mathcal O \\
     \mathcal O & \mathcal O & \mathcal O & \mathcal O & I & \mathcal O & \mathcal O & \mathcal O & \mathcal O & \mathcal O & \mathcal O & \mathcal O \\
     \mathcal O & \mathcal O & \mathcal O & \mathcal O & \mathcal O & I & \mathcal O & \mathcal O & \mathcal O & \mathcal O & \mathcal O & \mathcal O \\
     \mathcal O & \mathcal O & \mathcal O & \mathcal O & \mathcal O & \mathcal O & I & \mathcal O & \mathcal O & \mathcal O & \mathcal O & \mathcal O \\
     \mathcal O & \mathcal O & \mathcal O & \mathcal O & \mathcal O & \mathcal O & \mathcal O & I & \mathcal O & \mathcal O & \mathcal O & \mathcal O \\
     \mathcal O & \mathcal O & \mathcal O & \mathcal O & \mathcal O & \mathcal O & \mathcal O & \mathcal O & I & \mathcal O & \mathcal O & \mathcal O \\
     \mathcal O & \mathcal O & \mathcal O & \mathcal O & \mathcal O & \mathcal O & \mathcal O & \mathcal O & \mathcal O & I & \mathcal O & \mathcal O \\
     \mathcal O & \mathcal O & \mathcal O & \mathcal O & \mathcal O & \mathcal O & \mathcal O & \mathcal O & \mathcal O & \mathcal O & I & \mathcal O \\
     \mathcal O & \mathcal O & \mathcal O & \mathcal O & \mathcal O & \mathcal O & \mathcal O & \mathcal O & \mathcal O & \mathcal O & \mathcal O & I \\
   \end{array}\right|
$$
Hence
$$
   |I-\mathcal Dt|=|T| =
   \left|\begin{array}{rrrr}
     1 & -t & t^3 & -2t^6+t^{12} \\
     0 & 1-t & -t+t^3 & 2t^3-3t^6+t^{12} \\
     0 & -t & 1-t+t^3 & -2t+t^2+2t^3-3t^6+t^{12} \\
     0 & -t & -t+t^3 & 1-2t+t^2+2t^3-3t^6+t^{12}
   \end{array}\right|
$$

Substracting to each row the previous one, starting from the bottom, we obtain
$$
  |I-\mathcal Dt| =    \left|\begin{array}{rrrr}
     1 & -t & t^3 & -2t^6+t^{12} \\
     -1 & 1 & -t  & 2t^3-t^6 \\
     0 & -1 & 1 &  -2t + t^2 \\
     0 & 0 & -1 & 1
   \end{array}\right|= \left|\begin{array}{cccc}
     1 & t & t^3 & 2t^6-t^{12} \\
     1 & 1 & t  & 2t^3-t^6 \\
     0 & 1 & 1 &  2t - t^2 \\
     0 & 0 & 1 & 1
   \end{array}\right|=|\mathcal M^{\mathcal D}_4|
$$
In the case of arbitrary $n$, exactly the same operations lead to the matrix $\mathcal M^{\mathcal D}_n$ defined in the statement of \autoref{T:typeD}. Hence $|I-\mathcal Dt|=|\mathcal M^{\mathcal D}_n|$.

Notice that, as for type B, the matrix $\mathcal M^{\mathcal D}_n$ equals $\mathcal M^{\mathcal A}_{n-1}$ except for the last column. Hence, if we expand the determinant of $\mathcal M^{\mathcal D}_n$ along the last column, we see that we can express $|\mathcal M^{\mathcal D}_n|$ in terms of $|\mathcal M^{\mathcal A}_t|$ for $t<n-1$. Namely:
$$
   |\mathcal M_n^{\mathcal D}| = 1 |\mathcal M^{\mathcal A}_{n-2}| - (2t-t^2) |\mathcal M^{\mathcal A}_{n-3}| + \cdots +(-1)^n (2t^{n-1\choose 2}-t^{(n-1)(n-2)})|\mathcal M^{\mathcal A}_0| +(-1)^{n-1}(2t^{n\choose 2}-t^{n(n-1)}).
$$
As $|\mathcal M^{\mathcal A}_t|$ is a polynomial of degree $t\choose 2$, it follows that $|\mathcal M_n^{\mathcal D}|$ is a polynomial of degree $n(n-1)$. This finishes the proof.
\end{proof}

From this formula it is easy to relate the growth functions of the monoid $D_n$ with the growth functions of monoids of type $A_n$.

\begin{corollary}
If the growth functions of the Artin--Tits monoids of types $A_n$ and $D_n$ are
$$
     g_{A_n}=\frac{1}{H_n(t)}, \qquad g_{D_n}=\frac{1}{G_n(t)}
$$
and we denote $H_{-1}(t)=H_0(t)=1$, then for $n>1$ one has:
$$
     G_n(t)=\sum_{i=1}^{n} (-1)^{i-1} {\left(2t^{i\choose 2}-t^{i(i-1)}\right) H_{n-i-1}(t)}
$$
\end{corollary}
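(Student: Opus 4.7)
The plan is to follow exactly the pattern of the preceding corollary for type $B$. By \autoref{T:typeD} we have $G_n(t)=|\mathcal M_n^{\mathcal D}|$, so the statement amounts to a determinantal identity. The key observation is that $\mathcal M_n^{\mathcal D}$ agrees with $\mathcal M_{n-1}^{\mathcal A}$ in all columns except the last (both are square matrices of order $n$). My strategy is therefore to expand $|\mathcal M_n^{\mathcal D}|$ along its last column and control each cofactor. Since the $(r,n)$-entry of $\mathcal M_n^{\mathcal D}$ is $2t^{\binom{n-r+1}{2}}-t^{(n-r+1)(n-r)}$, Laplace expansion yields
$$
|\mathcal M_n^{\mathcal D}| = \sum_{r=1}^{n} (-1)^{r+n}\left(2t^{\binom{n-r+1}{2}} - t^{(n-r+1)(n-r)}\right) M_{r,n},
$$
where $M_{r,n}$ is the $(n-1)\times(n-1)$ minor obtained by deleting row $r$ and the last column; this minor is the same whether computed from $\mathcal M_n^{\mathcal D}$ or from $\mathcal M_{n-1}^{\mathcal A}$.

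The main step is to identify $M_{r,n}$ with $H_{r-2}(t)$. For this, I would exploit the vanishing pattern: the $(i,j)$-entry of $\mathcal M_{n-1}^{\mathcal A}$ is $0$ whenever $j<i-1$. Splitting the remaining rows into a ``top'' block $\{1,\ldots,r-1\}$ and a ``bottom'' block $\{r+1,\ldots,n\}$, and the remaining columns into $\{1,\ldots,r-1\}$ and $\{r,\ldots,n-1\}$, the lower-left rectangular block is then identically zero. Thus the minor factors as a product of the two diagonal blocks. The upper-left $(r-1)\times(r-1)$ block is exactly $\mathcal M_{r-2}^{\mathcal A}$, whose determinant is $H_{r-2}(t)$. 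After reindexing with $k=i-r$ and $l=j-r+1$, the lower-right $(n-r)\times(n-r)$ block has $(k,l)$-entry $t^{\binom{l-k}{2}}$ for $l\geq k$ and $0$ otherwise, so it is upper-triangular with $1$'s on the diagonal and contributes a factor of $1$. Hence $M_{r,n}=H_{r-2}(t)$.

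Substituting this into the Laplace expansion and performing the change of index $i=n-r+1$, which turns $(-1)^{r+n}$ into $(-1)^{i-1}$ and transforms the coefficient into $2t^{\binom{i}{2}}-t^{i(i-1)}$ and the minor into $H_{n-i-1}(t)$, gives the claimed formula. The only non-routine step is the block decomposition of the minors, but this mirrors the argument already implicit in the proof of \autoref{T:typeD} (where the same expansion was written out in the form $1\cdot|\mathcal M^{\mathcal A}_{n-2}|-(2t-t^2)|\mathcal M^{\mathcal A}_{n-3}|+\cdots$); so the only real obstacle is careful bookkeeping of indices and signs.
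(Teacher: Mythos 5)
Your proof is correct and takes exactly the same route as the paper: it reads off $G_n(t)=|\mathcal M_n^{\mathcal D}|$ from \autoref{T:typeD} and expands this determinant along the last column, identifying each cofactor with a smaller $H_m(t)$. The paper states this in one line; you supply the bookkeeping (the block-triangular decomposition of the minors and the reindexing $i=n-r+1$), and it all checks out, including the boundary cases $r=1,2$ handled by the conventions $H_{-1}=H_0=1$.
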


\begin{proof}
By \autoref{T:typeD} one has $G_n(t)=|\mathcal M^{\mathcal D}_n|$ for all $n> 1$. The formula in the statement is just the expansion of $|\mathcal M^{\mathcal D}_n|$ along the last column.
\end{proof}

But we can also expand the determinant $|\mathcal M^{\mathcal D}_n|$ along the first row, yielding a formula relating the growth function of $D_n$ to the growth functions of $D_m$ for $m<n$. To simplify the statement, although the monoid $D_n$ is defined for $n\geq 4$, we will naturally assume that $D_2=\mathbb Z \times \mathbb Z$ and $D_3=A_3$.

\begin{corollary}\label{C:Mobius_pols_type_D}
If the growth functions of the Artin--Tits monoid of type $D_n$, for $n\geq 2$, is
$$
     g_{D_n}=\frac{1}{G_n(t)}
$$
and we denote $G_1(t)=1$, then for $n\geq 2$ one has:
$$
     G_n(t)=\left(\sum_{i=1}^{n-1} (-1)^{i-1} {t^{i\choose 2} G_{n-i}(t)}\right)+(-1)^{n-1}\left(2t^{n\choose 2}-t^{n(n-1)}\right)
$$
\end{corollary}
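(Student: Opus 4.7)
The plan is very short: since \autoref{T:typeD} already identifies $G_n(t)$ with $|\mathcal M^{\mathcal D}_n|$ for $n\ge 4$, this corollary is just the Laplace expansion of that determinant along the first row. The first row of $\mathcal M^{\mathcal D}_n$ has entries $t^{j\choose 2}$ in columns $j=1,\ldots,n-1$ and $2t^{n\choose 2}-t^{n(n-1)}$ in column $n$, so the expansion reads
$$G_n(t)=\sum_{j=1}^{n-1}(-1)^{j-1}\,t^{j\choose 2}\,M_{1,j}+(-1)^{n-1}\bigl(2t^{n\choose 2}-t^{n(n-1)}\bigr)\,M_{1,n},$$
and the only thing to verify is the value of each minor $M_{1,j}$.

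For $M_{1,n}$, deleting row $1$ and the last column of $\mathcal M^{\mathcal D}_n$ leaves an $(n-1)\times(n-1)$ matrix whose $(i',j')$ entry equals $t^{j'-i'\choose 2}$ when $j'\ge i'$ and $0$ otherwise; this is upper triangular with $1$'s on the diagonal, so $M_{1,n}=1$. For $M_{1,j}$ with $1\le j\le n-1$, I would split the minor into four blocks by separating the first $j-1$ rows (and columns) from the remaining $n-j$. The bottom-left $(n-j)\times(j-1)$ block vanishes identically, because its entries call for $t^{j''-i'\choose 2}$ with $j''-i'\le -1$; hence the determinant factors as $\det(A)\det(C)$. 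The top-left block $A$ is upper triangular with $1$'s on the diagonal, contributing $1$. A direct index check then shows that the bottom-right block $C$ is exactly $\mathcal M^{\mathcal D}_{n-j}$: the generic entries $t^{j''-i''+1\choose 2}$ are preserved under the simultaneous shift of both indices by $j$, and the special last column, of the form $2t^{(n-j)-i''+1\choose 2}-t^{((n-j)-i''+1)((n-j)-i'')}$, emerges from the same shift. Consequently $M_{1,j}=G_{n-j}(t)$, and substituting produces the recurrence claimed in the statement.

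For the base cases $n=2$ and $n=3$ excluded from \autoref{T:typeD}, I would use the conventions $D_2=\mathbb Z\times\mathbb Z$ and $D_3=A_3$: a direct computation gives $G_2(t)=1-2t+t^2=|\mathcal M^{\mathcal D}_2|$ and $G_3(t)=1-3t+t^2+2t^3-t^6=|\mathcal M^{\mathcal D}_3|$ (the latter also matching the Bronfman recursion of \autoref{T:Bronfman} for $H_3$), so the same Laplace expansion applies. The only point requiring a bit of care, and thus the main (mild) obstacle, is the verification that the bottom-right block $C$ of the minor $M_{1,j}$ really coincides with $\mathcal M^{\mathcal D}_{n-j}$, which amounts to tracking how the special last column of $\mathcal M^{\mathcal D}_n$ transforms under the $j$-shift; once this is settled, everything else is a single step of cofactor expansion.
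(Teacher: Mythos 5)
Your proposal is correct and follows the paper's own proof, which simply invokes \autoref{T:typeD} to identify $G_n(t)$ with $|\mathcal M^{\mathcal D}_n|$ and then expands that determinant along the first row; you fill in exactly the minor computation (the block-triangular factorization showing $M_{1,j}=|\mathcal M^{\mathcal D}_{n-j}|$) that the paper leaves implicit. You also helpfully check the small cases $n\in\{2,3\}$ (and implicitly $|\mathcal M^{\mathcal D}_1|=1=G_1$) that lie outside the hypothesis of \autoref{T:typeD} but are needed for the corollary as stated, a point the paper glosses over.
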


\begin{proof}
By \autoref{T:typeD} one has $G_n(t)=|\mathcal M^{\mathcal D}_n|$ for all $n\geq 4$. Now expand this determinant along the first row.
\end{proof}


As in the previous cases, here are some examples to illustrate the computations.

{\bf Examples:}
$$
  g_{D_2}(t)=\left|\begin{array}{cc} 1 & 2t-t^2 \\ 1 & 1 \end{array}\right|^{-1},
\hspace{1cm}
  g_{D_3}(t)=\left|\begin{array}{ccc} 1 & t & 2t^3-t^6 \\ 1 & 1 & 2t-t^2 \\ 0 & 1 & 1 \end{array}\right|^{-1},
$$
$$
g_{D_4}(t)=\left|\begin{array}{cccc} 1 & t & t^3 & 2t^6-t^{12} \\  1 & 1 & t & 2t^3-t^6\\ 0 & 1 & 1 & 2t-t^2 \\
  0 & 0 & 1 & 1 \end{array}\right|^{-1},
\quad  g_{D_5}(t)=\left|\begin{array}{ccccc} 1 & t & t^3 & t^6 & 2t^{10}-t^{20} \\ 1 & 1 & t & t^3 & 2t^6-t^{12} \\ 0 & 1 & 1 & t & 2t^3-t^6 \\ 0 & 0 & 1 & 1 & 2t-t^2  \\ 0 & 0 & 0 & 1 & 1 \end{array}\right|^{-1}.
$$
In other words,
$$
g_{D_2}(t)=\frac{1}{1-2t+t^2}, \quad g_{D_3}(t)=\frac{1}{1-3t+t^2+2t^3-t^6}, \quad g_{D_4}(t)=\frac{1}{1-4t+3t^2+2t^3-3t^6+t^{12}},
$$
$$
g_{D_5}(t)=\frac{1}{1-5t+6t^2+2t^3-4t^4+t^5-4t^6+t^7+2t^{10}+t^{12}-t^{20}}.
$$

\section{Growth rates}\label{S:Growth_rates}

One of the big advantages of the way we use for counting the number of elements in a monoid, is that one can easily extract some conclusions on its growth rate. We recall that the exponential growth rate of a monoid, with respect of a given set of generators is equal to
$$
    \rho_M=\lim_{k\to \infty}{\sqrt[k]{\left|M_k\right|}}
$$

In the case of Artin--Tits monoids of type $A_n$, $B_n$ and $D_n$, this growth rate has been extensively studied by several authors~\cite{Saito_2009,VNB,Juge}. In~\cite{Juge}, it is shown that the M\"obius polynomial for $M$ has a unique root of smallest modulus, which is a real number. Its inverse is precisely $\rho_M$.

If one considers the family of monoids $\{A_n\}_{n\geq 1}$, it is a natural question, posed by several authors, to find the limit:
$$
   \rho=\lim_{n\to \infty}{\rho_{A_n}}.
$$
It was shown in~\cite[Theorem 8]{VNB} that $2\leq \rho_{A_n}\leq 4$ for all $n$, so we have $2\leq \rho \leq 4$. This was improved in~\cite[Proposition 7.98]{Juge}, where it is shown that the sequence $\rho_{A_n}$ is non-decreasing, and that its limit $\rho$ satisfies $2.5< \rho < 4$.

We will now see that we can determine $\rho$ with arbitrary precision. For this sake, we will first describe two concepts of analytical nature: the partial theta function and the KLV-constant $q_{\infty}$.

\subsection{Topics from Real Analysis} \label{Analysis}
\subsubsection{The partial theta function} \label{SS:partial}

We start by recalling the formal power series:
$$
    f(x,y)=\sum_{k=0}^{\infty}{y^{k \choose 2}x^k}.
$$

In Sokal's paper \cite{Sokal} this series is called the {\it partial theta function}, a name that comes from the relation with the classical Jacobi theta function. It is noteworthy that this name is sometimes also used for the more general series $$
    f(x,y)=\sum_{k=0}^{\infty}{y^{Ak^2+Bk}x^k},
$$

corresponding the previous case to $A=1/2$, $B=-1/2$. We will follow here Sokal's nomenclature.

Having already appeared in Ramanujan's work \cite{AnBe09}, the partial theta function is an instance of a more general family of series which also contains the deformed exponential function (see for example \cite{Liu98}). It is also a particular example of the three-variable Rogers-Ramanujan function \cite{RoRa19}, and should not be confused with the different -but related- ``false theta functions" defined by Rogers in \cite{Rog17}.  The study of the partial theta function has become a fruitful field of research, as relations have been discovered with the so-called section-hyperbolic polynomials \cite{KoSh13}, Dirichlet series via asymptotic expansions \cite{BeKi11},  Garrett-Ismail-Stanton type identities \cite{War03, GIS99} or $q$-hypergeometric series, particularly mock modular forms \cite{BFR12}; a good guide for the theory is the year-long work of Vladimir Kostov (see for example \cite{Kos13, Kos15, Kos16}).

To our knowledge, however, no relationship had been found so far between partial theta functions and algebraic invariants, in particular coming for Group/Monoid Theory. To describe such a relation, we need to introduce the concept of leading root. Following Sokal, given a formal series $f(x,y)$ with
coefficients in a commutative unital ring $R$, there is a unique formal series $x_0(y)$ in $R[\![y]\!]$ such that $f(x_0(y),y)=0$. This series is called the {\it leading root} of $f(x,y)$, and in the case of the partial theta function defined above, it has the shape:

$$
    x_0(y)=-(1+y+2y^2+4y^3+9y^4+\cdots)
$$

The sequence of coefficients of $-x_0(y)$, which is
$$
   1, 1, 2, 4, 9, 21, 52, 133,\ldots
$$

is proved to be an increasing sequence of positive integers \cite{Sokal}, and its first 7,000 terms were computed by the author.

In \autoref{P:coefficients} below we will state that this sequence is in particular, and very surprisingly, a combinatorial invariant of Artin--Tits monoids. A different combinatorial (non-algebraic) approach to these coefficients, based on stack polyominos, can be found in unpublished work of Prellberg \cite{Pre12}.

\subsubsection{The KLV-constant $q_{\infty}$}\label{SS:constant}

In the last section of Sokal's paper~\cite{Sokal} the exponential growth of the previous sequence of coefficients is also established, as a consequence of Pringhseim theorem. This number is the inverse of the first real singularity of the leading root of the partial theta function, and turns to be a constant whose value is $3.2336366652\ldots$ As this number was first effectively computed by Katkova-Lobova-Vishnyakova in \cite{KLV03} and seems not to have a standard name, we will call it here the KLV-constant and denote it by $q_{\infty}$, as these authors did.

The KLV-constant appeared in the context of a long-standing problem, very easy to formulate. Consider the series $$g_a(z)=\sum_{k=0}^{\infty} \frac{z^k}{a^{k^2}},$$ for $a>1$. The goal is to find the smallest $a$ such that $g_a(z)$ has only real roots. This problem was first undertaken by Hardy \cite{Har04}, who proved that $a^2\geq 9$ was a sufficient condition. Afterwards, different authors attacked the problem and lowered this bound, as for example P\'{o}lya-Szeg\H{o} \cite{PoSz76} or Craven-Csordas \cite{CrCs05}, who reached a bound of 3.42. The final solution finally appeared in 2003 in \cite[Theorem 4]{KLV03}, as the mentioned authors proved that $a^2\geq q_{\infty}$ is a \emph{necessary} and sufficient condition for the series $g_a(z)$ to have only real roots. Moreover, their computation provided a way of approximating $q_{\infty}$ with arbitrary precision.

We will show in the next section that $q_{\infty}=\rho$. This result permits in particular a completely unexpected description of the KLV-constant in terms of growths of monoids.

\subsection{Growth rates of braid monoids and the partial theta function}

Recall from~\autoref{C:M recurrence relation type A} that, for the Artin--Tits monoid of type $A_n$, the numbers $m_{k,i}$ satisfy the following recurrence relation:
$$
   m_{k,i}=m_{k,i-1}+\sum_{j=i}^{n}(-1)^{j-i}m_{k-{j-i+2 \choose 2},j+1},
$$
where $m_{0,1}=1$.

Using this recurrence relation, we compute the table containing the numbers $m_{k,i}$ for $k\geq 0$ and $i=1,\ldots,n+1$, as in \autoref{F:table A_3}.

We will now compare the tables corresponding to distinct values of $n$. To give an example, in \autoref{F:table A_5_A_6} we can see the first 9 rows of the tables corresponding to $A_5$ and $A_6$.

\begin{figure}[ht]
\begin{center}
{\tabcolsep 2pt
\small
\begin{tabular}{|@{}c@{}||c|c|c|c|c|c|}
\hline $\begin{array}{ccc} & & \\ &  & i \\ & k &  \end{array}$ & 1 & 2 & 3 & 4 & 5 & 6 \\
\hline
\hline 0 & \bf 1 & \bf 1 & \bf 1 & \bf 1 & \bf 1 & \bf 1 \\
\hline 1 & \bf 1 & \bf 2 & \bf 3 & \bf 4 &\bf  5 & 5  \\
\hline 2 & \bf 2 & \bf 5 & \bf 9 & \bf 14 & 19 & 19 \\
\hline 3 & \bf 4 & \bf 12 & \bf 25 & 43 & 62 & 62 \\
\hline 4 & \bf 9 & \bf 30 & 68 & 125 & 187 & 187 \\
\hline 5 & \bf 21 & 75 & 181 & 349 & 536 & 536 \\
\hline 6 & 51 & 190 & 478 & 952 & 1488 & 1488 \\
\hline 7 & 126 & 484 & 1254 & 2555 & 4043 & 4043 \\
\hline 8 & 317 & 1241 & 3279 & 6786 & 10829 & 10829 \\
\hline
\end{tabular}
\qquad
\begin{tabular}{|@{}c@{}||c|c|c|c|c|c|c|}
\hline $\begin{array}{ccc} & & \\ &  & i \\ & k &  \end{array}$ & 1 & 2 & 3 & 4 & 5 & 6 & 7\\
\hline
\hline 0 & \bf 1 & \bf 1 & \bf 1 & \bf 1 & \bf 1 & \bf 1 & \bf 1\\
\hline 1 & \bf 1 & \bf 2 & \bf 3 & \bf 4 &\bf  5 & \bf 6 & 6  \\
\hline 2 & \bf 2 & \bf 5 & \bf 9 & \bf 14 & \bf 20 & 26 & 26 \\
\hline 3 & \bf 4 & \bf 12 & \bf 25 & \bf 44 & 69 & 95 & 95 \\
\hline 4 & \bf 9 & \bf 30 & \bf 69 & 132 & 221 & 316 & 316 \\
\hline 5 & \bf 21 & \bf 76 & 188 & 383 & 673 & 989 & 989 \\
\hline 6 & \bf 52 & 197 & 512 & 1091 & 1985 & 2974 & 2974 \\
\hline 7 & 132 & 517 & 1393 & 3068 & 5726 & 8700 & 8700 \\
\hline 8 & 343 & 1373 & 3794 & 8557 & 16268 & 24968 & 24968 \\
\hline
\end{tabular}}
\end{center}
\caption{Tables containing $m_{k,i}$ for the Artin--Tits monoids $A_5$ and $A_6$, for $k\leq 8$.}
\label{F:table A_5_A_6}
\end{figure}

In \autoref{F:table A_5_A_6} we have boldfaced the elements $m_{k,i}$ such that $k+i\leq n+1$. We will call them {\it stabilized entries}. From the recurrence relation in \autoref{C:M recurrence relation type A}, it is easy to see that each stabilized entry in the table for $A_n$ is computed using exactly the same values as the corresponding element in the table for $A_{n+1}$. Hence, if $m_{k,i}$ is a stabilized entry for some $A_n$, the value $m_{k,i}$ will be the same in the table corresponding to $A_{n_1}$, for every $n_1>n$.
In other words, the table stabilizes when $n$ tends to infinity: The value of each $m_{k,i}$ will become constant. By abuse of notation, we will denote $m_{k,i}$ the value of $m_{k,i}$ for $n\geq k+i-1$, that is, when its value has stabilized.

The value of the stabilized $m_{k,i}$ can be computed as follows:
\begin{equation}\label{E:recursion}
   m_{k,i}=m_{k,i-1}+\sum_{j=i}^{\infty}(-1)^{j-i}m_{k-{j-i+2 \choose 2},j+1}
\end{equation}
Notice that this sum is always finite, as the value $k-{j-i+2 \choose 2}$ must be non-negative in order to produce a nonzero summand.

There is a nice way to understand the {\it limit table}, which contains the numbers which are already stabilized.

Notice that if $n_1<n_2$, there is a natural inclusion $A_{n_1}\subset A_{n_2}$. The direct limit of the monoids $\{A_n\}_{n\geq 1}$ with respect to these natural inclusions is known as $A_{\infty}$, the braid monoid on an infinite number of strands. It has the same presentation as the usual braid monoid $A_n$, but with an infinite number of generators:
$$
A_{\infty}=\left\langle a_1,a_2,\ldots \; \left| \begin{array}{cl} a_ia_j=a_ja_i, & |i-j|>1 \\ a_ia_ja_i=a_ja_ia_j, & |i-j|=1 \end{array}   \right.\right\rangle
$$

If $M=A_{\infty}$ we can define, for $k\geq 0$ and $i\geq 1$, the number $m_{k,i}'=\left|M_k^{(i)}\right|$ as the number of elements in $M$ having length $k$, whose lex-representative starts with a letter from $\{a_1,\ldots,a_i\}$. Notice that $M_k$ is an infinite set, so we need to show that $m_{k,i}'$ is well defined.

\begin{proposition}~\label{P:stabilized}
For $k\geq 0$ and $i\geq 1$, the number $m_{k,i}'$ is well defined, and it coincides with the stabilized entry $m_{k,i}$ in the table for $A_n$, for every $n\geq k+i-1$.
\end{proposition}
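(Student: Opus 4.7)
The plan is to show that every element $b\in M_k^{(i)}$ in $A_\infty$ automatically lies in the submonoid $A_n$ as soon as $n\ge k+i-1$, and that under the obvious inclusion $A_n\hookrightarrow A_\infty$ the subsets $(A_n)_k^{(i)}$ and $M_k^{(i)}$ coincide. Finiteness of $m_{k,i}'$ and its equality with the (eventually constant) table entry $m_{k,i}$ will then follow formally.

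First, I would establish the following combinatorial fact about lex-representatives in $A_\infty$: if $w=a_{j_1}a_{j_2}\cdots a_{j_k}$ is the lex-representative of an element $b$, then $j_s\le \max(j_1,\ldots,j_{s-1})+1$ for every $s\ge 2$. The argument is by contradiction; if $j_s\ge \max(j_1,\ldots,j_{s-1})+2$, then $|j_s-j_t|>1$ for every $t<s$, so $a_{j_s}$ commutes (in $A_\infty$) with every preceding letter. Hence $a_{j_s}a_{j_1}\cdots a_{j_{s-1}}a_{j_{s+1}}\cdots a_{j_k}$ represents the same element as $w$ and begins with a strictly larger atom, contradicting the maximality of $w$. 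A trivial induction on $s$ then gives $j_s\le j_1+(s-1)$.

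Next, I would combine this with the elementary observation that the first letter of the lex-representative of $b$ is the largest atom $a_j$ with $a_j\preccurlyeq b$; indeed, the possible first letters of representatives of $b$ are exactly the atoms dividing $b$, and the lex-rep picks the largest among them. So for $b\in M_k^{(i)}$, where by definition $a_j\not\preccurlyeq b$ for every $j>i$, one has $j_1\le i$, and hence every letter occurring in the lex-rep lies in $\{a_1,\ldots,a_{i+k-1}\}$. Thus $b$ lies in the image of $A_n\hookrightarrow A_\infty$ for every $n\ge k+i-1$. Since this inclusion is injective, preserves length (the defining relations being homogeneous), and identifies the set of words representing $b$ with that of its image (both being words in $a_1,\ldots,a_n$), it also preserves lex-representatives. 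Consequently the inclusion induces a bijection $(A_n)_k^{(i)}\cong M_k^{(i)}$ for $n\ge k+i-1$, which both proves finiteness of $m_{k,i}'$ and identifies it with the common value of $m_{k,i}$ computed in any such $A_n$.

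Finally, to reconcile this with the ``stabilized entry'' terminology, I would invoke the recurrence of \autoref{C:M recurrence relation type A}: each $m_{k,i}$ is expressed through entries $m_{k',i'}$ with $k'+i'\le k+i$ and $i'\le n+1$, so an immediate induction on $k$ shows that the table entries with $k+i\le n+1$ are unchanged when passing from $A_n$ to $A_{n+1}$, hence are unchanged for all larger $n$. The main obstacle of the argument is the combinatorial lemma on lex-representatives; the delicate point is that the ``bubble to the front'' operation must be performed using only the commutation relations (not the braid triple relations), and this is guaranteed precisely by the condition $j_s\ge \max(j_1,\ldots,j_{s-1})+2$ that one derives from any would-be counterexample.
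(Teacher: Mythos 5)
Your proposal is correct and reaches the same conclusion as the paper, but the key combinatorial step is obtained by a genuinely different argument. The paper argues about \emph{arbitrary} words representing $b$: it shows that if some word $w$ contains a letter $a_t$ with $t\geq k+i$, then by a pigeonhole count (since $w$ has length $k$, contains a letter with index $\leq i$, and contains $a_t$) some letter $a_j$ with $i<j<t$ is \emph{missing}; this missing letter splits the alphabet into a low part and a high part whose generators pairwise commute, allowing one to push a high-index block to the front of $w$ and contradict $b\in M_k^{(i)}$. You instead constrain the lex-representative directly via the ``no jump bigger than one'' property $j_s\leq \max(j_1,\ldots,j_{s-1})+1$, which you prove by bubbling a would-be violating letter to the front using only commutations; combined with $j_1\leq i$ this bounds the alphabet of the lex-rep. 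Your route is arguably cleaner: it isolates a reusable structural lemma about lexicographically maximal words (closely analogous to the staircase form of reduced words), whereas the paper's pigeonhole argument is a one-shot device. Both arguments rely on essentially the same underlying mechanism (commutation of far-apart generators plus maximality of the lex-rep) and both require the implicit fact that the canonical inclusion $A_n\hookrightarrow A_\infty$ is injective and identifies representatives, so that the lex-representative is the same computed in either monoid; you make this step more explicit than the paper does. Your final paragraph re-deriving stabilization from the recurrence of \autoref{C:M recurrence relation type A} is correct but redundant: the paper establishes stabilization in the discussion preceding the proposition, and the proposition itself is only the identification of the stable value with the $A_\infty$ count.

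One small point of rigor worth flagging: you assert that for $b\in M_k^{(i)}$ ``by definition $a_j\not\preccurlyeq b$ for every $j>i$.'' In $A_\infty$ this is a condition over infinitely many $j$, and the definition of $M_k^{(i)}$ in the infinite-strand setting needs to be read as exactly this infinite family of non-divisibility conditions (or equivalently, as requiring that the lex-rep begin with a letter of index $\leq i$). This is fine, but deserves a sentence since the displayed definition in \autoref{S:Counting} is written for a fixed finite $n$.
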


\begin{proof}
Let $M=A_{\infty}$ and let $b\in M_k^{(i)}$. The lex-representative of $b$ starts with a letter from $\{a_1,\ldots,a_i\}$.  We will show that a word representing $b$ cannot contain the letter $a_t$ for $t\geq k+i$.

Recall that all words representing $b$ have length $k$, as the relations in $A_{\infty}$ are homogeneous. Also, all words representing $b$ involve the same set of letters, as this set cannot be modified by applying a relation (no new letter can appear, and no letter can dissapear).

Suppose that some word $w$ representing $b$ contains the letter $a_t$, for some $t\geq k+i$. We know that some letter from $\{a_1,\ldots,a_i\}$ appears in $w$, as it appears in the lex-representative of $b$. But $w$ has length $k$, and one of its letters already belongs to $\{a_1,\ldots,a_i\}$, so $w$ cannot involve all letters from the set $\{a_{i+1},\ldots,a_{i+k}\}$. Hence, there is some $a_j\in \{a_{i+1},\ldots,a_{i+k}\}$ which does not appear in $w$.

Let $I=\{a_1,\ldots, a_{j-1}\}$ and $J=\{a_{j+1},a_{j+1},\ldots\}$. We know that the letters in $w$ belong to $I\cup J$, and that $w$ has letters from both sets.  But every element in $I$ commutes with every element in $J$. This means that we can obtain, from $w$, a word representing $b$ having the form $w_Jw_I$, where $w_J$ only involves letters from $J$, and $w_I$ only involves letters form $I$. By hypothesis, $w_J$ is a nonempty word, so $b$ admits some element from $J$ as a prefix, and this contradicts that $b\in M_k^{(i)}$.

Therefore, every word representing $b\in M_k^{(i)}$ involves only letters from $\{a_1,\ldots,a_{k+i-1}\}$. Hence, the lex-representatives of $b$ in $A_{\infty}$ and $A_n$ coincide, for every $n\geq k+i-1$.
\end{proof}

We remark that, although the growth rate in $A_{\infty}$ does not make sense, as it is not a finitely generated monoid (the number of elements of length one is already infinite), the numbers $m_{k,i}'$ in the {\it limit table} are all well defined. As $m_{k,i}'=m_{k,i}$, we will denote these numbers by $m_{k,i}$, from now on.

Now we will find a new way to describe each stabilized number $m_{k,i}$, depending only on the elements $m_{t,1}$ for $t=0,\ldots,k-1$. For that purpose, we need some results from braid theory.

\begin{proposition}
Let $b\in A_n$ be a positive braid. For every $j\leq 1$, there is a unique maximal braid $\gamma_j$ such that $\gamma_j\preccurlyeq b$ and $\gamma_j$ involves only the generators $a_j,\ldots,a_n$.
\end{proposition}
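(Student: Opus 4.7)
The plan is to exhibit $\gamma_j$ as the maximum of the (finite, nonempty) set of divisors of $b$ whose support lies in $\{a_j,\ldots,a_n\}$, using the lattice structure of the Garside monoid $A_n$ together with the fact that the parabolic submonoid on $\{a_j,\ldots,a_n\}$ is itself an Artin--Tits monoid of type $A$ whose lcm operation agrees with the one in $A_n$.

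First, I would set up the notion of support. Since the braid relations $a_ia_j=a_ja_i$ (for $|i-j|>1$) and $a_ia_{i+1}a_i=a_{i+1}a_ia_{i+1}$ both preserve the \emph{set} of letters appearing in a word, every positive braid $\gamma\in A_n$ has a well-defined support $\mathrm{supp}(\gamma)\subseteq\{a_1,\ldots,a_n\}$, equal to the set of generators occurring in any (equivalently, every) word representing $\gamma$. Denote by $A_n^{(j)}$ the submonoid generated by $\{a_j,\ldots,a_n\}$; by the result of Paris cited earlier in the paper, $A_n^{(j)}$ is itself an Artin--Tits monoid of type $A_{n-j+1}$, hence a Garside monoid, and the inclusion $A_n^{(j)}\hookrightarrow A_n$ is compatible with the lcm operation (in a spherical Artin--Tits monoid, the lcm of two elements of a standard parabolic submonoid, computed in the ambient monoid, lies in and agrees with the lcm in the parabolic).

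Second, I would define
$$
  S_j=\{\gamma\in A_n\,:\,\gamma\preccurlyeq b,\ \mathrm{supp}(\gamma)\subseteq\{a_j,\ldots,a_n\}\}.
$$
This set is nonempty since $1\in S_j$, and it is finite because the set of left-divisors of $b$ is finite in the Garside monoid $A_n$ (bounded by $\|b\|$). If $\gamma,\gamma'\in S_j$, then on the one hand $\gamma\vee\gamma'\preccurlyeq b$ by the lattice property (invariance of $\preccurlyeq$), and on the other hand $\gamma,\gamma'\in A_n^{(j)}$, so by the compatibility of lcms recalled above, $\gamma\vee\gamma'$ is also an element of $A_n^{(j)}$, hence has support inside $\{a_j,\ldots,a_n\}$. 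Thus $\gamma\vee\gamma'\in S_j$, i.e.\ $S_j$ is closed under $\vee$. A finite nonempty $\vee$-closed subset of a lattice has a unique maximum, which is precisely the sought-after $\gamma_j$.

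The main obstacle is justifying the compatibility of lcms between $A_n^{(j)}$ and $A_n$: one has to rule out the possibility that two elements $\gamma,\gamma'$ whose words only use $a_j,\ldots,a_n$ could admit, in the ambient $A_n$, a common multiple strictly smaller than their lcm in $A_n^{(j)}$. I would handle this by appealing directly to Paris' results on parabolic submonoids of spherical type Artin--Tits monoids (\cite{Paris}), which give that such parabolic submonoids are Garside in their own right and that their Garside structure is induced from the ambient one, so in particular lcms match. Once this is granted, every other step above is a direct consequence of $A_n$ being a lattice under $\preccurlyeq$ with finite divisor sets.
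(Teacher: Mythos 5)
Your proof is correct, but it takes a genuinely different route from the paper's. The paper's proof is explicit and constructive: it recalls that a positive braid $\gamma$ of length $t$ lies in the parabolic submonoid on $\{a_j,\ldots,a_n\}$ if and only if $\gamma\preccurlyeq(\Delta_{[j,n]})^t$ where $\Delta_{[j,n]}=a_j\vee\cdots\vee a_n$, and then produces $\gamma_j$ directly as the single gcd $\gamma_j=b\wedge(\Delta_{[j,n]})^k$ with $k=\|b\|$, which simultaneously gives existence and uniqueness. You instead show that the set $S_j$ of divisors of $b$ supported in $\{a_j,\ldots,a_n\}$ is nonempty, finite, and closed under $\vee$, and then invoke the general principle that a finite nonempty $\vee$-closed subset of a lattice has a unique maximum. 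The crux of your argument is the $\vee$-closedness, which you get by appealing to Paris' results on parabolic submonoids being sublattices of the ambient Garside monoid. That is a sound appeal, though it is worth noticing that the fact you invoke and the fact the paper uses (the prefix-of-power characterization) are essentially equivalent and proved by the same kind of argument. The paper's version has the mild advantage of being self-contained and exhibiting $\gamma_j$ concretely (a formula that is then reused in later propositions); your version is a bit more conceptual and would extend verbatim to any sublattice submonoid. One cosmetic imprecision: the set of divisors of $b$ is finite not because it is ``bounded by $\|b\|$'' but because every divisor has length at most $\|b\|$ and there are finitely many elements of each length — you clearly mean this, but the phrasing should be tightened.
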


\begin{proof}
Let $\Delta_{[j,n]}=a_j\vee \cdots \vee a_n\in A_n$. It is well-known that a positive braid $\gamma\in A_n$ can be expressed as a word in the generators $\{a_j,\ldots,a_n\}$ if and only if it is a prefix of $(\Delta_{[j,n]})^m$ for some $m>0$.  Actually, if $\gamma$ has length $t$, then $\gamma$ can be expressed as a word in the generators $\{a_j,\ldots,a_n\}$ if and only if it is a prefix of $(\Delta_{[j,n]})^t$.

Let $k$ be the length of the positive braid $b$. By the above arguments, a prefix $\gamma\preccurlyeq b$ can be written as a word in $\{a_j,\ldots,a_n\}$ if and only if $\gamma\preccurlyeq (\Delta_{[j,n]})^k$. Therefore, the set of prefixes of $b$ which involve only the generators $a_j,\ldots,a_n$ is the set of common prefixes of $b$ and $(\Delta_{[j,n]})^k$.  Since $A_n$ is a lattice with respect to the prefix order, it follows that this set has a maximal element (with respect to $\preccurlyeq$), namely $\gamma_j=b\wedge (\Delta_{[j,n]})^k$.

Notice that $\gamma_j$ is also maximal in terms of length: it is the biggest prefix of $b$ which involves only the generators $a_j,\ldots,a_n$.
\end{proof}

\begin{proposition}\label{P:decomposition}
Let $b\in A_{n}$ be a positive braid. There is a unique decomposition $b=b_n b_{n-1}\cdots b_1$, so that $b_n b_{n-1}\cdots b_j$ is the biggest prefix of $a$ which involves the generators $a_j,\ldots a_n$.
\end{proposition}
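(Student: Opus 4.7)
The plan is to deduce the decomposition directly from the previous proposition, which supplies the unique maximal prefix $\gamma_j$ of $b$ that can be written using only $a_j,\ldots,a_n$. I would set $b_n\,b_{n-1}\cdots b_j = \gamma_j$ and recover the factors $b_j$ from the chain of $\gamma_j$'s by left cancellation, which is legitimate because $A_n$ is cancellative.

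First I would establish the nested chain
$$
\gamma_n \preccurlyeq \gamma_{n-1}\preccurlyeq \cdots \preccurlyeq \gamma_1 = b.
$$
For the inclusion $\gamma_{j+1}\preccurlyeq \gamma_j$, observe that $\gamma_{j+1}$ is a prefix of $b$ that involves only letters from $\{a_{j+1},\ldots,a_n\}\subset \{a_j,\ldots,a_n\}$, so by the maximality characterization of $\gamma_j$ from the previous proposition, $\gamma_{j+1}\preccurlyeq \gamma_j$. The identity $\gamma_1=b$ holds because $b\in A_n$ is itself a word in $\{a_1,\ldots,a_n\}$, so it is the largest such prefix of itself.

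Next I would define the factors recursively. Put $b_n=\gamma_n$, and for $j=n-1, n-2,\ldots,1$, let $b_j$ be the unique element of $A_n$ such that $\gamma_{j+1}\,b_j = \gamma_j$; this element exists and is unique because $\gamma_{j+1}\preccurlyeq\gamma_j$ and $A_n$ is cancellative. A trivial induction then gives $b_n b_{n-1}\cdots b_j=\gamma_j$ for every $j$, and in particular $b_n b_{n-1}\cdots b_1=\gamma_1=b$, establishing existence.

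For uniqueness, suppose $b=b_n'\,b_{n-1}'\cdots b_1'$ is any decomposition with the required property, so that $b_n'\cdots b_j'$ is \emph{the} biggest prefix of $b$ involving only $a_j,\ldots,a_n$. By the uniqueness statement of the previous proposition, $b_n'\cdots b_j'=\gamma_j$ for every $j$, and then left cancellation forces $b_j'=b_j$. No step here should present real difficulty: the only thing to watch is that the maximal prefix $\gamma_j$ is genuinely unique (supplied by the previous proposition) and that the chain of divisibilities is in the correct direction, so that each $b_j$ is well-defined by cancellation.
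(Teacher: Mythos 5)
Your proof is correct and takes essentially the same route as the paper: you build the chain $\gamma_n\preccurlyeq\cdots\preccurlyeq\gamma_1=b$ from the previous proposition and peel off the factors $b_j$ by left cancellation. The paper's own proof is a terser version of the same argument (it merely adds the convention $\gamma_{n+1}=1$ so that all $b_j$ are defined uniformly), and you supply the justification of the divisibility chain and the uniqueness step that the paper leaves implicit.
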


\begin{proof}
This is clear form the previous result. We just define $\gamma_{n+1}=1$ and $\gamma_j = b \wedge (\Delta_{[j,n]})^k$ for $j=1,\ldots,n$, where $k$ is the length of $b$. Then
$$
  1=\gamma_{n+1}\preccurlyeq \gamma_n \preccurlyeq \cdots \preccurlyeq \gamma_1 = b.
$$
We must define each $b_j$ so that $b_n\cdots b_j=\gamma_j$. Hence $b_j$ is the only element such that $\gamma_{j+1}b_j=\gamma_j$.
\end{proof}

\begin{proposition}\label{P:lex_of_factors}
Let $b\in A_n$ be a positive braid, and let $b=b_n b_{n-1}\cdots b_1$ be the unique decomposition described in \autoref{P:decomposition}. For every $j=1,\ldots,n$, the element $b_j$ involves generators from $\{a_j,\ldots,a_n\}$ and, if $b_j\neq 1$, its lex-representative starts with $a_j$.
\end{proposition}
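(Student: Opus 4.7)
The plan is to exploit the two defining properties of the factors: each $\gamma_j = b_n b_{n-1}\cdots b_j$ is, by construction, the \emph{maximum} prefix of $b$ that can be written using only generators in $\{a_j,\ldots,a_n\}$, while $\gamma_{j+1}$ is the maximum with respect to the smaller set $\{a_{j+1},\ldots,a_n\}$. Both statements of the proposition will follow by comparing these two maximality conditions in a fairly direct way.

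For the first statement (that $b_j$ involves only generators from $\{a_j,\ldots,a_n\}$), I would rely on the fact that all defining relations of $A_n$ are homogeneous not merely in length but in the multiset of letters used: every representative of a given element in $A_n$ involves exactly the same multiset of generators. Since $\gamma_j=\gamma_{j+1}b_j$ with $\gamma_j$ involving only $\{a_j,\ldots,a_n\}$ and $\gamma_{j+1}$ involving only $\{a_{j+1},\ldots,a_n\}\subseteq\{a_j,\ldots,a_n\}$, the letters of $b_j$ must lie in $\{a_j,\ldots,a_n\}$.

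For the second statement, assume $b_j\neq 1$ and let $s$ be the largest index such that $a_s\preccurlyeq b_j$; by the previous paragraph, $s\in\{j,\ldots,n\}$, and by the characterization of lex-representatives via the biggest possible starting atom, the lex-representative of $b_j$ starts with $a_s$. The goal is to show $s=j$. I would argue by contradiction: if $s>j$, then $a_s\preccurlyeq b_j$ implies, by left-invariance of $\preccurlyeq$, that $\gamma_{j+1}a_s\preccurlyeq \gamma_{j+1}b_j=\gamma_j\preccurlyeq b$. But $\gamma_{j+1}a_s$ is a prefix of $b$ that involves only generators from $\{a_{j+1},\ldots,a_n\}$ (since $s\geq j+1$ and $\gamma_{j+1}$ only uses such generators) and is strictly longer than $\gamma_{j+1}$. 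This contradicts the maximality of $\gamma_{j+1}$ among prefixes of $b$ written in the alphabet $\{a_{j+1},\ldots,a_n\}$, as established in the previous proposition.

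I do not anticipate a genuine obstacle here: once the two maximality properties of $\gamma_j$ and $\gamma_{j+1}$ are in hand, both conclusions fall out by short, almost routine manipulations. The only point that deserves care is the homogeneity-in-letters argument in the first part, which relies on the fact that both braid relations preserve the multiset of letters in a word; this is where one implicitly uses that the presentation is of type $A$ (the same argument works for types $B$ and $D$).
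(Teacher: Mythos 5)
Your overall strategy matches the paper's: for the first claim you argue from the invariance of the generator content of a positive braid, and for the second you derive a contradiction with the maximality of $\gamma_{j+1}$ exactly as the paper does. One small correction is needed in the first part: the braid relation $a_i a_j a_i = a_j a_i a_j$ does \emph{not} preserve the multiset of letters (the left side has two copies of $a_i$ and one of $a_j$, the right side two of $a_j$ and one of $a_i$); it only preserves the \emph{set} of distinct generators appearing in a word. Fortunately, set-invariance is all you need: any word for $\gamma_{j+1}$ concatenated with any word for $b_j$ represents $\gamma_j$, and the set of letters appearing in any representative of $\gamma_j$ is contained in $\{a_j,\ldots,a_n\}$, so the same holds for $b_j$. (The paper obtains this more tersely by remarking that $b_j$ is a suffix of $\gamma_j$.) The rest of your argument is correct and essentially identical to the paper's.
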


\begin{proof}
Let $k$ be the length of $b$. By construction, $b_j$ is the only element such that $\gamma_{j+1}b_j=\gamma_j$, where $\gamma_j=b\wedge \Delta_{[j,n]}^k$ for every $j=1,\ldots,n$. Since $b_j$ is a suffix of $\gamma_j$, it only involves generators from $\{a_j,\ldots,a_n\}$.

Suppose that $b_j\neq 1$. From the above paragraph, the lex-representative of $b_j$ can only start with a generator from $\{a_j,\ldots,a_n\}$. We must then show that $b_j$ only admits the generator $a_j$ as prefix. Suppose this is not the case. Then $a_i\preccurlyeq b_j$ for some $i>j$. But then $\gamma_{j+1}a_i$ is a prefix of $b$ which only involves generators from $\{a_{j+1},\ldots,a_n\}$, contradicting the maximality of $\gamma_{j+1}$.
\end{proof}

We can finally give a new interpretation of the numbers $m_{k,i}$ corresponding to the monoid $M=A_{\infty}$. Recall that $M_k^{(t)}$ is the set of elements of length $k$ in $A_{\infty}$ whose lex-representative starts with $a_j$, for some $j\in \{1,\ldots,t\}$. On the other hand, let $P_k^{(t)}$ be the set of $t$-uples $(c_t,\ldots ,c_1)\in \mathcal (A_{\infty})^t$ such that $|c_1|+\cdots+|c_t|=k$ and, for $i=1,\ldots,t$, either $c_i=1$ or the lex-representative of $c_i$ starts with $a_1$.

\begin{proposition}\label{P:bijection}
Given $k\geq 0$ and $t\geq 1$, the sets $M_k^{(t)}$ and $P_k^{(t)}$ have the same size.
\end{proposition}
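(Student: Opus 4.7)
The plan is to construct an explicit bijection $\Phi \colon M_k^{(t)} \to P_k^{(t)}$ via the cascade decomposition of \autoref{P:decomposition}, combined with the shift monoid homomorphism $\sigma_j \colon A_\infty \to \langle a_j, a_{j+1}, \ldots \rangle$ sending $a_i \mapsto a_{i+j-1}$. Each $\sigma_j$ is an injective homomorphism that preserves length and the standard order on the generators, so it restricts to a length-preserving bijection between elements $c \in A_\infty$ with $c=1$ or lex-representative starting with $a_1$, and elements of $\langle a_j, a_{j+1}, \ldots\rangle$ that are $1$ or have lex-representative starting with $a_j$.

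For the forward map, given $b \in M_k^{(t)}$, I would use \autoref{P:stabilized} to realise $b$ inside some $A_n$ and apply \autoref{P:decomposition} to write $b = b_n b_{n-1} \cdots b_1$. Since $b \in M_k^{(t)}$ no atom $a_\ell$ with $\ell > t$ is a prefix of $b$, so \autoref{P:lex_of_factors} forces $b_j = 1$ for all $j > t$, giving $b = b_t b_{t-1}\cdots b_1$. Setting $c_j := \sigma_j^{-1}(b_j)$, each $c_j$ is either $1$ or has lex-representative starting with $a_1$, and $\sum_j |c_j| = k$, so $\Phi(b) := (c_t,\ldots,c_1) \in P_k^{(t)}$.

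For the inverse, given $(c_t,\ldots,c_1) \in P_k^{(t)}$, I would set $b_j := \sigma_j(c_j)$ and $b := b_t b_{t-1}\cdots b_1$, and define $\Psi(c_t,\ldots,c_1) := b$. The main obstacle, from which $\Phi$ and $\Psi$ are seen to be mutually inverse, is the key claim that $b \in M_k^{(t)}$ and that its cascade decomposition is exactly $(b_t,\ldots,b_1,1,1,\ldots)$.

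I would prove the key claim by induction on $t$. The base $t=1$ is immediate from the definition of $M_k^{(1)}$. For the inductive step, apply the hypothesis to $(c_{t-1},\ldots,c_1)$: then $d := b_{t-1}\cdots b_1$ lies in $M_{k-|b_t|}^{(t-1)}$, so no $a_\ell$ with $\ell \geq t$ divides $d$. If $b_t \cdot e \preccurlyeq b = b_t \cdot d$ for some nontrivial $e \in \langle a_t, a_{t+1},\ldots\rangle$, then left cancellation yields $e \preccurlyeq d$, forcing some $a_\ell$ with $\ell \geq t$ to divide $d$, a contradiction. Hence $b_t$ is the biggest prefix of $b$ in $\langle a_t, a_{t+1},\ldots\rangle$, and iterating at each level identifies $b_t b_{t-1}\cdots b_j$ as the biggest prefix in $\langle a_j, a_{j+1},\ldots\rangle$ for every $j \leq t$. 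Finally, for $\ell > t$ any prefix of $b$ in $\langle a_\ell,\ldots\rangle$ must be a prefix of $b_t$; since the lex-representative of $b_t$ starts with $a_t$, no $a_m$ with $m > t$ divides $b_t$, and the corresponding prefix is trivial. This identifies the cascade decomposition of $b$ as claimed and places $b$ in $M_k^{(t)}$, completing the induction.
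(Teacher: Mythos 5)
Your proof is correct and takes essentially the same approach as the paper's: the same bijection built from the shift homomorphism and the cascade decomposition of \autoref{P:decomposition}, with the same key verification that the cascade decomposition of the reconstructed braid $b=b_t\cdots b_1$ is exactly the sequence of shifted blocks. The only cosmetic difference is that you phrase the inductive verification as an induction on the number of blocks $t$ (applying the inductive hypothesis to the truncated tuple $(c_{t-1},\ldots,c_1)$), whereas the paper runs an equivalent induction on the block index $j$ from $1$ up to $t$, showing that $b_j\cdots b_1$ admits no $a_i$ with $i>j$ as a prefix.
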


\begin{proof}
If $b\in M_k^{(t)}$, then $b\in A_{N}$ for some big $N$ (actually, we can take $N=t+k-1$).

By \autoref{P:decomposition}, there is a unique decomposition $b=b_N b_{N-1}\cdots b_1$, so that $b_N\cdots b_j$ is the biggest prefix of $b$ which involves the generators $a_j,\ldots,a_N$. Now, since the lex representative of $b$ starts with $a_i$ for some $i\leq t$, it follows that there cannot be a nontrivial prefix of $b$ involving the generators $a_{t+1},\ldots,a_N$. Therefore $b_j=1$ for $j>t$, and we just have $b=b_t b_{t-1}\cdots b_1$.

Let $f: A_{\infty}\rightarrow A_{\infty}$ be the shifting homomorphism which sends $a_i$ to $a_{i+1}$ for every $i\geq 1$. Notice that $f$ preserves the length of every element and, if the lex-representative of an element $b$ starts with $a_i$, the lex-representative of $f^t(b)$ starts with $a_{t+i}$.

It is clear that $f^{-j}(c)$ is defined if $c\in A_{\infty}$ involves only generators $a_i$ for $i>j$.
 By \autoref{P:lex_of_factors}, $f^{1-j}(b_{j})$ is well defined and, if it is nontrivial, its lex-representative starts with $a_1$.

We can then define the following map:
$$
   \begin{array}{rccl} \varphi: & M_k^{(t)} & \longrightarrow & P_k^{(t)} \\
                                  & b & \longmapsto & (c_t,\ldots,c_1),
   \end{array}
$$
where $c_j=f^{1-j}(b_j)$ for $j=1,\ldots,t$. We see that $\varphi$ is well defined from the above arguments, and also because $|c_t|+\cdots+|c_1| = |b_t|+\cdots+|b_1| = |b|=k$.

Now let us define the following map:
$$
   \begin{array}{rccl} \psi: & P_k^{(t)} & \longrightarrow & M_k^{(t)} \\
                              &    (c_t,\ldots,c_1) & \longmapsto & b=b_t\cdots b_1,
   \end{array}
$$
where $b_j=f^{j-1}(c_j)$ for $j=1,\ldots,t$. See \autoref{F:bijection} for an example. We will show that $\psi$ is well defined, and it is the inverse of $\varphi$, so both maps are bijections.

\begin{figure}[ht]
\begin{center}
\includegraphics{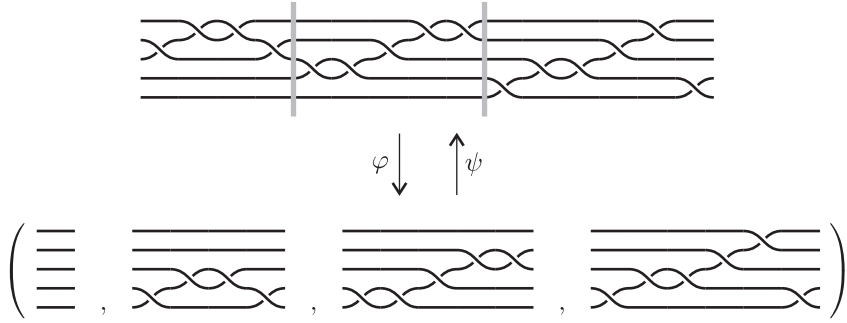}
\end{center}
\caption{A braid $b=b_4b_3b_2b_1=(1)(a_3a_4a_4a_3)(a_2a_2a_3a_4a_4)(a_1a_2a_2a_3a_4a_1)\in M_k^{(4)}$, and its corresponding 4-tuple $\varphi(b)=(1,\; a_1a_2a_2a_1,\; a_1a_1a_2a_3a_3,\; a_1a_2a_2a_3a_4a_1)\in P_k^{(4)}$.}
\label{F:bijection}
\end{figure}

It is clear that $b$ is a well-defined positive braid, of length $|b_t|+\cdots+|b_1|=|c_t|+\cdots+|c_1|=k$. Also, for every $j=0,\ldots,t-1$, the element $b_t\cdots b_{j+1}$ involves only generators $a_i$ for $i>j$.

Let us show that for $j=1,\ldots,t$, the braid $b_j\cdots b_1$ cannot start with $a_i$ for $i>j$. If $j=1$ this is clear, as $b_1=c_1$ either is trivial or its lex-representative starts with $a_1$. Suppose that $j>1$ and the claim is true for smaller values of $j$. If $a_i\preccurlyeq b_j\cdots b_1$ for some $i>j$, then $a_i\vee b_j \preccurlyeq b_j\cdots b_1$. By construction, $b_j$ cannot start with $a_i$, hence $a_i\vee b_j=b_j d$ for some nontrivial positive braid $d$. Since $a_i$ and $b_j$ involve generators of index at least $j$, the braid $d$ also involves generators of index at least $j$. But we have $b_j d \preccurlyeq b_j\cdots b_1$, hence $d\preccurlyeq b_{j-1}\cdots b_1$, which implies that $b_{j-1}\cdots b_1$ can start with a generator of index at least $j$, a fact that contradicts the induction hypothesis. The claim is then shown.

It follows that $b_t\cdots b_{j+1}$ is the biggest prefix of $b$ involving generators $a_i$ for $i>j$. This has two consequences. Firstly, the lex-representative of $b_t\cdots b_1$ starts with $a_i$ for $i\leq t$, hence $b\in M_k^{(t)}$, and $\psi$ is well defined. Secondly, $b_t\cdots b_1$ is precisely the unique decomposition of $b$ described in \autoref{P:decomposition}, which is used to define $\varphi$. So $\varphi(\psi((c_t,\ldots,c_1)))=\varphi(b)=(f^{1-t}(b_t),f^{2-t}(b_{t-1}),\ldots, f^0(b_1)) = (c_t,\ldots,c_1)$. As $\psi\circ \varphi$ is clearly equal to the identity map, it follows that $\psi$ is the inverse of $\varphi$, as we wanted to show.
\end{proof}

From \autoref{P:bijection}, we can describe the number $m_{k,t}$ directly from the numbers $m_{l,1}$ for $l\leq k$. This will give us the desired connection of these numbers with the partial theta function $f(x,y)$.

Recall that we denote $m_{k,t}=\left| M_k^{(t)}\right|$.  Now let
$$
\xi_0(y)=\sum_{k=0}^{\infty}{m_{k,1}y^k}= 1 + y + 2y^2 + 4y^3 + 9y^4 + 21 y^5+\cdots
$$
The coefficients of $\xi_0(y)$ are the numbers in the first column of the {\it limit table} containing the numbers $m_{k,i}$. Let us show that the series determined by the other columns of the table are, precisely, the powers of the series determined by the first column.

\begin{proposition}\label{P:coefficients}
For every $t>0$, one has $\displaystyle (\xi_0(y))^t= \sum_{k=0}^{\infty}{m_{k,t}y^k}$.
\end{proposition}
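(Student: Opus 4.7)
The plan is to deduce the identity directly from the bijection established in \autoref{P:bijection} together with the Cauchy product formula for power series.

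First, I would invoke \autoref{P:bijection} to rewrite $m_{k,t} = |M_k^{(t)}| = |P_k^{(t)}|$. Thus the task reduces to counting the $t$-tuples $(c_t,\ldots,c_1) \in (A_\infty)^t$ with $|c_1|+\cdots+|c_t|=k$ such that, for each $i$, either $c_i=1$ or the lex-representative of $c_i$ starts with $a_1$.

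Next, the key observation is that the tuples in $P_k^{(t)}$ factor independently component by component once the lengths are fixed. Let me define, for each $l \geq 0$, the quantity $n_l$ as the number of elements of $A_\infty$ of length $l$ that are either trivial (only for $l=0$) or have lex-representative starting with $a_1$. By definition of $m_{l,1}$, one has $n_0 = m_{0,1} = 1$ (the trivial element, which is the only member of $M_0^{(1)}$), and for $l \geq 1$, the elements counted by $m_{l,1}$ are precisely those of length $l$ whose lex-representative starts with $a_1$, so $n_l = m_{l,1}$. Hence in either case $n_l = m_{l,1}$.

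Stratifying $P_k^{(t)}$ by the length vector $(l_t,\ldots,l_1)$ with $l_1+\cdots+l_t=k$, the number of admissible choices for $c_i$ of length $l_i$ is $n_{l_i} = m_{l_i,1}$, and these choices are independent. Therefore
$$
m_{k,t} \;=\; |P_k^{(t)}| \;=\; \sum_{l_1+\cdots+l_t=k}\, m_{l_1,1}\,m_{l_2,1}\cdots m_{l_t,1}.
$$
The right-hand side is exactly the coefficient of $y^k$ in the formal power series $(\xi_0(y))^t$ by the Cauchy product formula applied $t-1$ times. Summing over $k\geq 0$ yields
$$
(\xi_0(y))^t \;=\; \sum_{k=0}^{\infty} m_{k,t}\, y^k,
$$
which is the desired identity.

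I do not foresee a real obstacle here: the combinatorial content is entirely packaged into \autoref{P:bijection}, and the passage from the bijection to the generating function identity is a routine application of the product formula for power series. The only point requiring minor care is handling the trivial factors $c_i = 1$ correctly, so that the $l=0$ case is consistent with $m_{0,1}=1$; this is why both conventions line up and one can uniformly write $n_l = m_{l,1}$ for all $l\geq 0$.
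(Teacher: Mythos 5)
Your proof is correct and follows essentially the same route as the paper: both reduce the identity to the bijection $|M_k^{(t)}| = |P_k^{(t)}|$ from \autoref{P:bijection} and then identify $|P_k^{(t)}|$ with the $k$th coefficient of $(\xi_0(y))^t$ via the Cauchy product formula. Your extra care in checking that the $l=0$ convention matches $m_{0,1}=1$ is a small, welcome detail that the paper's terser proof leaves implicit.
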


\begin{proof}
The $k$-th coefficient of $(\xi_0(y))^t$ is equal to:
$$
  \sum_{(k_t,\ldots,k_1)\atop k_t+\cdots+k_1=k}{m_{k_t,1}\cdots m_{k_1,1}},
$$
which is precisely the number of elements in $P_k^{(t)}$. By \autoref{P:bijection}, this is also the number of elements in $M_k^{(t)}$, so the result follows.
\end{proof}

We now denote $x_0(y)=-\xi_0(y)$, and we have the following:

\begin{proposition}
Let $\displaystyle f(x,y)=\sum_{n=0}^{\infty}{y^{n\choose 2}x^n}$. Then $f(x_0(y),y)=0$.
\end{proposition}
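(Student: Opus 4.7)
The plan is to substitute $x = x_0(y) = -\xi_0(y)$ into the partial theta function and show that every coefficient of the resulting formal power series in $y$ vanishes, using \autoref{P:coefficients} to expand powers of $\xi_0(y)$ and then the stabilized recurrence for the numbers $m_{k,1}$.

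First I would write
$$
f(x_0(y),y)=\sum_{n=0}^{\infty} y^{n\choose 2}(-\xi_0(y))^{n}=1+\sum_{n=1}^{\infty}(-1)^{n} y^{n\choose 2}(\xi_0(y))^{n},
$$
and apply \autoref{P:coefficients} to replace each $(\xi_0(y))^{n}$ with $\sum_{k\geq 0}m_{k,n}\,y^{k}$. Since for each $n\geq 1$ the factor $y^{n\choose 2}$ shifts degrees forward, only finitely many pairs $(n,k)$ contribute to each coefficient of $y^{K}$, so interchanging the order of summation is legitimate in $\Z[\![y]\!]$. Collecting terms, the coefficient of $y^{K}$ in $f(x_0(y),y)$ equals
$$
[y^{K}]\,f(x_0(y),y)=[K=0]+\sum_{n\geq 1}(-1)^{n}\,m_{K-{n\choose 2},\,n},
$$
with the convention $m_{k,n}=0$ whenever $k<0$.

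The second step is to recognize this alternating sum as the stabilized version of the recurrence in \autoref{C:M recurrence relation type A}. Applied with $i=1$ in the monoid $A_N$ for $N$ large enough that all entries $m_{K-{n\choose 2},n}$ appearing have stabilized (for instance $N\geq K$), and using $m_{k,0}=0$ for $k\geq 1$, that corollary gives
$$
m_{K,1}=\sum_{j\geq 1}(-1)^{j-1}m_{K-{j+1\choose 2},\,j+1}=\sum_{n\geq 2}(-1)^{n}\,m_{K-{n\choose 2},\,n},
$$
so that $\sum_{n\geq 1}(-1)^{n}m_{K-{n\choose 2},n}=-m_{K,1}+m_{K,1}=0$ for every $K\geq 1$. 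For $K=0$ only $n=0$ and $n=1$ contribute (these are the terms with ${n\choose 2}=0$), and a direct check using $m_{0,1}=1$ yields $1+(-1)\cdot 1=0$. Hence every coefficient of $y^{K}$ in $f(x_0(y),y)$ vanishes, proving $f(x_0(y),y)=0$.

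The only genuinely delicate point is making sure the recurrence \eqref{E:recursion} is invoked in its stabilized form, i.e., with $m_{k,n}$ interpreted as the common value in every sufficiently large $A_N$ (or equivalently in $A_\infty$, by \autoref{P:stabilized}); once that is done, the computation is essentially a bookkeeping exercise. No convergence issues arise, since we are working in the ring of formal power series and each coefficient of $y^{K}$ reduces to a finite alternating sum that matches exactly the recurrence governing $\xi_0(y)$.
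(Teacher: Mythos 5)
Your proof is correct and follows essentially the same route as the paper's: expand $(\xi_0(y))^n$ via \autoref{P:coefficients}, identify the coefficient of $y^K$ as the alternating sum $\sum_{n\geq 0}(-1)^n m_{K-\binom{n}{2},n}$, and recognize it as the stabilized recurrence~(\ref{E:recursion}) with $i=1$. The only addition beyond the paper's argument is the explicit justification of the interchange of summation and the observation that $N\geq K$ suffices for stabilization, both of which are correct but routine.
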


\begin{proof}
The expression $f(x_0(y),y)$ is a power series in the variable $y$. We need to show that all coefficients are zero.

The coefficient of $y^0$ in $f(x_0(y),y)$ comes from the values $n=0$ and $n=1$, and it is $1-1=0$, as desired.

Let $k> 0$. By \autoref{P:coefficients}, the coefficient of $y^{k}$  in $f(x_0(y),y)$ is
$$
    -m_{k,1}+m_{k-1,2}-m_{k-{3 \choose 2},3}+m_{k-{4\choose 2},4}-\cdots
$$
where there are summands as long as the first subindex of $m_{k-{r\choose 2},r}$ is non-negative. This sum equals zero by~(\ref{E:recursion}) for $i=1$, and the result follows.
\end{proof}

The following result follows immediately:

\begin{theorem}\label{T:Sokal_sequence}
Let $x_0(y)$ be the only solution to the classical partial theta function $\displaystyle \sum_{k=0}^{\infty}{y^{k \choose 2}x^k}$, and let $\xi_0(y)=-x_0(y)=1+y+2y^2+4y^3+9y^4+\cdots$ For every $k\geq 0$, the coefficient of $y^k$ in the series $\xi_0(y)$ is equal to the number of braids of length $k$, in the monoid $A_{\infty}$, whose maximal lexicographic representative starts with $a_1$.
\end{theorem}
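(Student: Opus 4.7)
The plan is to derive this theorem directly from the two propositions immediately preceding it. Set $\xi_0(y) = \sum_{k \geq 0} m_{k,1} y^k$, where $m_{k,1}$ is the number of elements of length $k$ in $A_\infty$ whose maximal lexicographic representative begins with $a_1$. The preceding proposition shows that $x_0(y) := -\xi_0(y)$ satisfies the functional equation $f(x_0(y), y) = 0$, where $f(x,y) = \sum_{k \geq 0} y^{\binom{k}{2}} x^k$ is the partial theta function. All that remains is to justify the claim, implicit in the statement, that $x_0(y)$ is the \emph{unique} formal power series solution.

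For this uniqueness, I would carry out the standard coefficient-extraction argument for leading roots. Writing a candidate $x(y) = \sum_{j \geq 0} c_j y^j$, the constant term of $f(x(y), y)$ equals $1 + c_0$, which forces $c_0 = -1$. For $N \geq 1$, the coefficient of $y^N$ in $f(x(y), y)$ decomposes as $c_N$ (coming from the $k=1$ term of the outer sum) plus contributions from $k \geq 2$, each of the form $[y^{N-\binom{k}{2}}]\, (x(y))^k$ with $\binom{k}{2} \geq 1$, hence each depending only on $c_0, \ldots, c_{N-1}$; the $k = 0$ term contributes nothing since $N \geq 1$. Setting the whole expression to zero therefore determines $c_N$ uniquely in terms of the earlier coefficients, so the leading root is the unique power series solution.

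Combining the two steps, $-\xi_0(y)$ must coincide with $x_0(y)$, and reading off coefficients yields the claim. The bulk of the work has already been done in Propositions~\ref{P:bijection} and~\ref{P:coefficients}, together with the stabilization argument behind recurrence~(\ref{E:recursion}): those establish the combinatorial bijection for $M_k^{(t)}$ and the power-series identity $(\xi_0(y))^t = \sum_k m_{k,t} y^k$, after which the theorem itself is essentially a repackaging with a short uniqueness check appended. I do not anticipate any real obstacle, and the whole argument ought to fit in a few lines.
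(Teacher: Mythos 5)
Your proposal is correct and follows essentially the same route as the paper: the theorem is read off from the immediately preceding proposition that $f(-\xi_0(y),y)=0$ combined with uniqueness of the leading root in $R[\![y]\!]$. The only difference is that the paper cites Sokal~\cite{Sokal} for that uniqueness, while you supply the standard coefficient-extraction argument (constant term forces $c_0=-1$; for $N\geq 1$, the $k\geq 2$ contributions involve only $c_0,\dots,c_{N-1}$ since $\binom{k}{2}\geq 1$, so $c_N$ is determined) — a correct and harmless inclusion.
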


\subsection{Limit of growth rates of braid monoids}

We are considering the partial theta function $f(x,y)=\displaystyle \sum_{k=0}^{\infty}{y^{k \choose 2}x^k}$, whose only nontrivial root is the series $x_0(y)$. The coefficients of the series $\xi_0(y)=-x_0(y)=1+y+2y^2+4y^3+9y^4+\cdots$ form the sequence
$$
   (L_k)_{k\geq 0}=(1,1,2,4,9,21,52,\ldots).
$$
The growth rate of this sequence is known to be the KLV-constant $q_{\infty}$~\cite{Sokal}, which can be computed with arbitrary precision:

\begin{theorem}\label{T:Sokal}{\rm \cite{Sokal}}
Let $(L_k)_{k\geq 0}$ be the sequence of coefficients of $\xi_0(y)$. Then its growth rate is:
$$
   \lim_{k\to\infty}{\sqrt[k]{L_k}}=\lim_{k\to\infty}{\frac{L_{k+1}}{L_k}}=q_\infty= 3.233636\ldots
$$
\end{theorem}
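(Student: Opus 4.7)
The plan is to apply Pringsheim's theorem. Since $\xi_0(y) = \sum_{k\geq 0} L_k y^k$ has positive coefficients $L_k$, Pringsheim's theorem guarantees that the radius of convergence $R$ of $\xi_0$ is itself a singularity of (the analytic continuation of) $\xi_0$, and in particular $\limsup_k \sqrt[k]{L_k} = 1/R$. The whole proof therefore reduces to identifying $R$ with $1/q_\infty$ and then upgrading the $\limsup$ to an honest limit, as well as to a limit of consecutive ratios.

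First, I would analyze the analyticity of $x_0(y) = -\xi_0(y)$ via the defining equation $f(x_0(y), y) = 0$, where $f(x,y) = \sum_{k\geq 0} y^{\binom{k}{2}} x^k$. By the analytic implicit function theorem, $x_0$ extends analytically in a neighborhood of any point $y$ at which $(\partial_x f)(x_0(y), y) \neq 0$. Hence the first positive real singularity $y^*$ of $x_0$ is the smallest $y > 0$ at which $x_0(y)$ becomes a multiple root of $f(\cdot, y)$, equivalently, at which two simple real roots of $f(\cdot, y)$ collide.

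Second, I would perform the substitution $x = z\, y^{1/2}$, which recasts $f(x,y) = \sum_{k\geq 0} z^k\, y^{k^2/2}$; setting $a = y^{-1/2}$ this is precisely the KLV function $g_a(z) = \sum_k z^k/a^{k^2}$ from \autoref{SS:constant}. The KLV theorem~\cite{KLV03} states that $g_a$ has only real zeros if and only if $a^2 \geq q_\infty$; at the threshold $a^2 = q_\infty$ two real zeros coalesce, and just below it they move off the real axis into a complex conjugate pair. Consequently the critical $y^*$ is exactly $1/q_\infty$, giving $R = 1/q_\infty$ and $\limsup \sqrt[k]{L_k} = q_\infty$.

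Third, I would promote this to genuine limits. The generic collision of two simple zeros at $y^*$ produces a square-root singularity: locally one has $f(x,y) = c (x - x_0(y^*))^2 + d (y - y^*) + \cdots$ with $c, d \neq 0$, so Weierstrass preparation yields $x_0(y) - x_0(y^*) = C\sqrt{y^* - y}(1 + o(1))$ as $y \to y^{*-}$. Combined with the fact that $y^*$ is the \emph{unique} singularity on the circle $|y| = R$ (which must be checked separately), the standard singularity-analysis transfer gives an asymptotic $L_k \sim C'\, q_\infty^k / k^{3/2}$; both $\sqrt[k]{L_k} \to q_\infty$ and $L_{k+1}/L_k \to q_\infty$ follow at once. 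The main obstacle is precisely this uniqueness statement: one must rule out any further singularity of $x_0(y)$ on the circle of radius $1/q_\infty$ coming from complex values of $y$ at which $\partial_x f$ vanishes simultaneously with $f$. This is the delicate analytic content of~\cite{Sokal} and requires a careful study of the discriminant locus of the partial theta function in the complex $(x,y)$ plane.
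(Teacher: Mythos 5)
The paper does not give a proof of \autoref{T:Sokal}; it is cited verbatim from Sokal's article \cite{Sokal}, and the surrounding text in \autoref{SS:constant} only summarizes that Sokal establishes the growth rate ``as a consequence of Pringsheim's theorem.'' So there is no internal proof to match your attempt against, and your sketch should be judged against the literature it reconstructs.

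Your outline is structurally sensible (Pringsheim for the $\limsup$, the change of variables $x=zy^{1/2}$ taking $f(x,y)$ to $g_a(z)$ with $a=y^{-1/2}$, and the KLV threshold identifying the critical radius), but there are gaps beyond the one you flag. First, you infer from ``two real zeros of $g_a$ coalesce at $a^2=q_\infty$'' that the critical $y^*$ for $x_0$ is exactly $1/q_\infty$, but the KLV theorem is a statement about \emph{all} zeros of $g_a$, and you have not argued that the colliding pair is the one containing the leading root $z_0(y)=a\,x_0(y)$. If a different pair of roots coalesces first, $x_0$ could continue analytically past $1/q_\infty$ and the conclusion $R=1/q_\infty$ would fail; this requires a separate argument. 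Second, to run singularity-analysis transfer you need not just a square-root singularity and uniqueness of the singularity on $|y|=R$ (which you do note), but also analytic continuation of $x_0$ to a $\Delta$-domain (a slit disk of strictly larger radius), which is again an additional claim about the discriminant locus. Third, the claim that a generic double root gives a square-root singularity assumes $d=\partial_y f\neq 0$ at the critical point, which you assert but do not verify. Finally, Sokal's own route to $L_{k+1}/L_k\to q_\infty$ is (to my knowledge) more elementary: he establishes monotonicity/log-convexity properties of the coefficients $L_k$, from which the existence of the ratio limit follows directly and agrees with $\limsup L_k^{1/k}=1/R$, bypassing the singularity-analysis machinery entirely. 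Your route, if completed, would yield the sharper asymptotic $L_k\sim C' q_\infty^k k^{-3/2}$, which is more than the theorem asserts; but as written it relies on several unproven analytic hypotheses, so it is a program rather than a proof.
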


Notice that the above result states that $(L_k)_{k\geq 0}$ grows like $(q_\infty)^k$. In other words:
\begin{equation}\label{E:proportion}
    0<\lim_{k\to\infty}{\frac{L_k}{(q_{\infty})^k}}<\infty
\end{equation}

We want to relate the constant $q_{\infty}$ to the growth rate of the monoids $A_n$, for $n\geq 1$. Recall that we are counting the elements in $A_n$ by considering their lex-representatives (their maximal lexicographic representatives with $a_1<a_2<\cdots <a_n$). The following is an important property of this set of words:

\begin{theorem}\label{T:regular_language}{\rm \cite{GGM}}
For every $n\geq 1$, the set of lex-representatives of the braid monoid $A_n$ is a regular language.
\end{theorem}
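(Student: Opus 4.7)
The plan is to establish regularity by constructing a finite-state automaton that accepts exactly the lex-max representatives. A word $w=a_{i_1}\cdots a_{i_k}$ is the lex-max representative of its class if and only if, at each position $j$, the letter $a_{i_j}$ is the largest atom left-dividing the element represented by the suffix $a_{i_j}\cdots a_{i_k}$. This greedy characterisation is immediate from the definitions: if a strictly larger atom $a_\ell$ were a prefix of that suffix, one could exhibit an equivalent word agreeing with $w$ on positions $1,\ldots,j-1$ and having $a_\ell$ at position $j$, contradicting lex-maximality; the converse follows by induction on length, since suffixes of lex-max words are themselves lex-max.

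Given this characterisation, I would process $w$ from right to left, carrying as the automaton state a finite invariant of the current suffix that controls its set of atomic left-divisors. A first attempt is to use the ``starting set'' $S(x)=\{a_i:a_i\preccurlyeq x\}$, which takes only $2^n$ values and already encodes the constraint $a_{i_j}=\max S$. However, $S$ is not preserved under left-prepending: in $A_2$, both $x_1=a_2$ and $x_2=a_2a_1$ have $S=\{a_2\}$, yet $S(a_1x_1)=\{a_1\}$ while $S(a_1x_2)=\{a_1,a_2\}$. The state must therefore be enriched, and a natural candidate is the leftmost simple factor in the Garside normal form of the suffix, which together with $S$ takes at most $2^n\cdot n!$ values.

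The pivotal lemma to prove is that prepending an atom $a_j$ to the current suffix updates this enriched state in a way determined only by the state and $j$. This reduces to analysing how Garside left-normal forms change when a single atom is prepended on the left: a finite computation governed by the lattice properties of $A_n$ and the fact that simple elements are in bijection with permutations of $\{1,\ldots,n+1\}$. Once this finite-memory property is secured, we obtain a deterministic right-to-left automaton whose accepting paths are exactly the lex-max words; reversing it yields a left-to-right NFA (and, by determinisation, a DFA) recognising the same language, proving regularity.

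The main obstacle I expect is verifying the finite-memory update property, since the cascading rewrites in a Garside normal form when an atom is left-prepended could in principle propagate arbitrarily far along the factorisation. Should direct local propagation prove intractable, an alternative route is to appeal to the biautomaticity of Artin--Tits monoids of spherical type (Charney) together with the general theorem that automatic monoids admit regular shortlex normal forms with respect to any ordering of the generators. Since $A_n$ is homogeneous, shortlex coincides with lex on each equivalence class; choosing the reverse generator ordering $a_n<\cdots<a_1$ converts lex-max under the original ordering into lex-min under the new one, which is regular by the automatic-structure theorem. Either route yields the conclusion.
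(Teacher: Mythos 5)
The paper does not prove this result; it is cited to Gebhardt and Gonz\'alez-Meneses \cite{GGM}, so there is no in-paper proof to compare against. Your primary construction is nevertheless sound, and the obstacle you flag is in fact not one. For a simple element $s$ and a positive element $x$, the maximal simple prefix $H(\cdot)=(\cdot)\wedge\Delta$ satisfies $H(sx)=H(s\,H(x))$: one inclusion is immediate, and for the other, set $p=H(sx)$ and observe that $s$, being a simple prefix of $sx$, satisfies $s\preccurlyeq p$; writing $p=sq$ with $q$ simple (a right divisor of a simple is simple), left-cancellation gives $q\preccurlyeq x$, hence $q\preccurlyeq x\wedge\Delta=H(x)$, whence $p=sq\preccurlyeq s\,H(x)$ and so $p\preccurlyeq H(s\,H(x))$. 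Since atoms are simple, prepending $a_j$ to the current suffix $x$ updates the first normal factor by the purely local rule $H(x)\mapsto H(a_j\,H(x))$, so the cascade never propagates to the first factor. Your enriched state is already finite and in fact redundant: the starting set of $x$ equals that of $H(x)$, so the first normal factor alone suffices, giving $(n+1)!$ states rather than your $2^n n!$ (an over-count, but finiteness is all that matters). The lex-max check $a_j=\max S$ is readable off the new state, so one obtains a right-to-left DFA whose reversal establishes regularity.

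Your fallback is more delicate than you suggest. The ECHLPT theorem that any ordering of a generating set yields a regular ShortLex structure is proved for groups and exploits inverses; for a monoid one would instead need a synchronous biautomatic structure together with an explicit projection argument (project the regular padded language of pairs $(u,v)$ with $\overline{u}=\overline{v}$ and $v<_{\mathrm{sl}}u$ onto the first coordinate, then complement within the geodesic language). Biautomaticity of the positive braid monoid does follow from the Garside normal form, so this can be made to work, but the ``general theorem'' in the form you invoke is not off the shelf, and Charney's result concerns the Artin group rather than the monoid. The first route is cleaner and is the one to write down.
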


In~\cite{GGM}, an automaton accepting this regular language is defined, having the minimal possible number of states. Moreover, in~\cite{FlGo18}, the automaton is described in detail for every $n\geq 1$, and the following result is shown:

\begin{theorem}\label{T:1/32}{\rm \cite[Corollary 5.5]{FlGo18}}
For every $n\geq 1$, the proportion of lex-representatives of length $k$, in the braid monoid $A_n$, finishing at the same state as $a_1$, tends to a limit $p>\frac{1}{32}$ when $k$ tends to infinity.
\end{theorem}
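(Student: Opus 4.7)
The plan is to work directly with the finite state automaton from \cite{GGM,FlGo18} that accepts the regular language of lex-representatives in $A_n$, and extract the limiting proportions from a Perron--Frobenius analysis of its transition matrix. Let $T$ denote the (non-negative) transition matrix of this automaton, indexed by its states, with $T_{s,s'}$ equal to the number of letters $a_i$ whose reading takes state $s$ to state $s'$. Then the number of lex-representatives of length $k$ ending at state $s$ is $\mathbf{1}^{\top}T^k \mathbf{e}_s$, where $\mathbf{1}$ is the indicator of the initial state and $\mathbf{e}_s$ is the standard basis vector at $s$.

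First I would identify the unique recurrent (essential) strongly connected component $\mathcal C$ of the automaton. Since by \autoref{T:typeA} the growth function is rational with a unique dominant real pole $\rho_{A_n}^{-1}$ (by Jug\'e's result cited in the introduction), $T$ restricted to $\mathcal C$ has a simple dominant eigenvalue $\rho_{A_n}$, with positive left and right Perron eigenvectors $u,v$. Standard Perron--Frobenius then gives
\[
\frac{T^k}{\rho_{A_n}^k}\;\longrightarrow\; \frac{vu^{\top}}{u^{\top}v}\qquad (k\to\infty),
\]
provided the period is $1$; aperiodicity follows because the automaton admits loops at the state reached by $a_1$ (reading $a_1$ again from that state stays within $\mathcal C$, as $a_1a_1\cdots a_1$ is always a lex-representative). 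From this, the proportion of length-$k$ lex-representatives finishing at any given recurrent state $s$ converges to $v_s u_s / \sum_{s'\in \mathcal C} v_{s'}u_{s'}$, which I denote $p_n(s)$. Applied to $s_0$, the state reached after reading $a_1$ from the initial state, this gives existence of the limit $p_n:=p_n(s_0)$.

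The substantive part is the uniform lower bound $p_n > 1/32$ for all $n\geq 1$. Here I would use the explicit description of the automaton in \cite{FlGo18}: the states correspond to certain ``prefix types'' of lex-representatives, and the state $s_0$ is distinguished. The key ingredient is a combinatorial bijection (or injection) exhibiting that a definite fraction of all lex-representatives pass through $s_0$ at each step --- for instance, by showing that prepending $a_1$ to any lex-representative of a suitable sub-class yields another lex-representative ending at $s_0$, and bounding the size of this sub-class from below. Pushing this through the Perron--Frobenius limit then yields a lower bound of the form $p_n \geq c$ with $c$ independent of $n$, where $c$ arises from the explicit structural constants in \cite{FlGo18}; checking that one can take $c = 1/32$ is a finite bookkeeping matter once the bijection is in place.

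The main obstacle, as I see it, is the uniformity of the bound in $n$. The eigenvectors $u,v$ and the state space grow with $n$, so purely spectral bounds will not suffice; a combinatorial/structural argument is required that interacts well with the nested inclusions $A_n \hookrightarrow A_{n+1}$. I expect the right approach is to exploit the stability phenomenon highlighted around \autoref{P:stabilized}: for fixed $k$, lex-representatives ending at $s_0$ are controlled by a finite portion of the automaton independent of $n$, and the proportion in the limit $k\to\infty$ can be transferred between the $A_n$ and the stabilized ``$A_\infty$'' picture, where the universal constant $1/32$ can be extracted from an explicit finite check on a small sub-automaton.
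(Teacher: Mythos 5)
This theorem is not proved in the paper at all: it is imported verbatim as \cite[Corollary 5.5]{FlGo18}, so there is no internal proof to compare against. Your proposal is therefore an attempt to reconstruct the proof from \cite{FlGo18} from scratch, and as written it does not constitute a proof but a research plan. The Perron--Frobenius setup in your first two paragraphs is reasonable and plausibly matches what \cite{FlGo18} does to establish the \emph{existence} of the limiting proportion, but the claim that actually carries the weight of the theorem --- the uniform lower bound $p_n > 1/32$ --- is nowhere established. You announce a ``combinatorial bijection (or injection)'' exhibiting that a definite fraction of lex-representatives pass through $s_0$, but you neither define the sub-class, nor construct the map, nor explain why it is injective, nor show that it accounts for at least $1/32$ of all lex-representatives of each length. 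The phrase ``checking that one can take $c = 1/32$ is a finite bookkeeping matter once the bijection is in place'' is precisely the part that would need to be done, and it is the only part that is nontrivial: the automaton of \cite{GGM,FlGo18} has a number of states that grows with $n$, so the uniformity in $n$ is the whole content of the statement.

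Two further cautions. First, your aperiodicity argument (``the automaton admits loops at the state reached by $a_1$'') presupposes that the state reached by $a_1$ has a self-loop labelled $a_1$; this is true because $a_1^k$ is a lex-representative, but you should also verify that this state lies in the dominant recurrent class $\mathcal{C}$, which requires knowing the communication structure of the automaton. Second, your proposed transfer between the $A_n$ automaton and a ``stabilized $A_\infty$ picture'' is not a sound reduction as stated: the Perron--Frobenius limit $p_n$ for fixed $n$ is a $k\to\infty$ asymptotic, whereas the stabilization of \autoref{P:stabilized} is a statement about fixed $k$ and $n\to\infty$; these two limits do not obviously commute, and making them commute is another genuine gap. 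In short, the outline identifies the right tools but leaves the decisive steps unproved.
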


The above result can be described as follows. Let $M_n$ be the incidence matrix of the automaton accepting the regular language of lex-representatives of $A_n$. Each row (resp. column) of $M_n$ corresponds to a state of the automaton. We can assume that the first row (and also the first column) corresponds to the state determined by the word $a_1$.

By Perron-Frobenius theory, there is a unique left eigenvector $\mathbf{v_n}$ of $M_n$, all of whose coordinates are non-negative and such that the sum of these coordinates is equal to 1. The $i$th coordinate of $\mathbf{v_n}$ is precisely the limit, as $k$ tends to infinity, of the proportion of lex-representatives of length $k$ finishing at the $i$th state (see~\cite{FlGo18}). \autoref{T:1/32} states that the first coordinate of $\mathbf{v_n}$ is greater than $\frac{1}{32}$, for all $n\geq 1$.

Now let us fix some $n\geq 1$. We will denote by $m_{k,i}(A_n)$ the numbers appearing in \autoref{C:M recurrence relation type A}, which are also the numbers appearing in \autoref{F:table A_5_A_6} for $n=5$ and $n=6$. We know that the growth rate of the column $n+1$ is precisely $\rho_n$, the growth rate of the monoid $A_n$. Let us show that all the columns of the table corresponding to $A_n$ have the same growth rate.

\begin{proposition}
Let $n\geq 1$. For every $i=1,\ldots,n+1$, we have:
$$
    \lim_{k\to\infty}{\frac{m_{k+1,i}(A_n)}{m_{k,i}(A_n)}}= \rho_n.
$$
\end{proposition}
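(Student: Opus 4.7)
My plan is to use the regular-language and automaton description of lex-representatives, combined with Perron--Frobenius theory. The cases $i=n+1$ and $i=n$ are immediate: the first is the definition of $\rho_n$, and the second follows from $m_{k,n+1}=m_{k,n}$. For the remaining values of $i$, I would first invoke the monotonicity $m_{k,1}\le m_{k,i}\le m_{k,n+1}$ coming from the nested inclusions $M_k^{(1)}\subseteq\cdots\subseteq M_k^{(n+1)}$, combined with the known asymptotics $m_{k,n+1}(A_n)\sim C\rho_n^k$ and a standard squeeze argument for ratios of non-negative sequences (any sequence sandwiched between two sequences whose consecutive ratios tend to the same limit has itself that limit). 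This reduces the problem to proving that $m_{k+1,1}(A_n)/m_{k,1}(A_n)\to\rho_n$.

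To establish this, by \autoref{T:regular_language} the set $L$ of lex-representatives of $A_n$ is a regular language; the minimal DFA $\mathcal{A}_n$ accepting $L$ has every state accepting because $L$ is prefix-closed (any prefix of a lex-max word is itself lex-max for its braid, as a consequence of the homogeneity of the defining relations). Writing $q_0$ for the initial state and $s_j=\delta(q_0,a_j)$, the quantity $m_{k,1}(A_n)$ equals the number of length-$(k-1)$ paths in $\mathcal{A}_n$ starting at $s_1$. By Perron--Frobenius applied to the transition matrix $M$ of $\mathcal{A}_n$, this count is asymptotic to $\rho_n^{k-1}\,r(s_1)\,\sum_q\ell(q)$, where $r$ and $\ell$ are the right and left Perron eigenvectors of $M$. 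Hence the proposition reduces to the positivity $r(s_1)>0$.

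The hard part will be this positivity. My plan is to combine \autoref{T:1/32}, which yields $\ell(s_1)>1/32$ and hence places $s_1$ in or downstream of an essential strongly connected component $C^*$ of $\mathcal{A}_n$ carrying the eigenvalue $\rho_n$, with the detailed description of $\mathcal{A}_n$ in~\cite{FlGo18}: from that description one can extract that $\mathcal{A}_n$ has a unique essential SCC and that every reachable state lying on a non-trivial cycle belongs to it. Since $s_1$ is reachable from $q_0$ and sits on arbitrarily long cycles (for instance those read by arbitrary positive iterates of $a_1$, all of which are lex-max words), this will force $s_1\in C^*$ and hence $r(s_1)>0$. If the required structural statement about the automaton turns out to be delicate, a fallback is to argue directly and combinatorially: produce a positive-density subset of length-$(k-1)$ lex-representatives that, when prefixed by $a_1$ and (if necessary) suitably rewritten, yield length-$k$ lex-representatives starting with $a_1$, thereby obtaining a lower bound of the form $m_{k,1}(A_n)\ge c\,m_{k-1,n+1}(A_n)$ with $c>0$ independent of $k$.
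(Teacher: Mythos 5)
The reduction to $i=1$ via a ``squeeze argument for ratios'' is a genuine gap: the principle you state is false. If $a_k\le b_k\le c_k$ with $a_{k+1}/a_k\to L$ and $c_{k+1}/c_k\to L$, it does \emph{not} follow that $b_{k+1}/b_k\to L$. For a counterexample take $a_k=2^k$, $c_k=2\cdot 2^k$, and $b_k=2^k$ for $k$ even, $b_k=2\cdot 2^k$ for $k$ odd: the outer ratios are identically $2$, while $b_{k+1}/b_k$ oscillates between $4$ and $1$. Knowing $m_{k,1}(A_n)\sim D_1\rho_n^k$ and $m_{k,n+1}(A_n)\sim C\rho_n^k$ with $D_1<C$ only pins $m_{k+1,i}/m_{k,i}$ between $\rho_n D_1/C$ and $\rho_n C/D_1$, not to $\rho_n$. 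The fallback bound $m_{k,1}(A_n)\ge c\,m_{k-1,n+1}(A_n)$ has the same problem: it sandwiches $m_{k,1}$ between $cC\rho_n^{k-1}$ and $C\rho_n^k$ (up to $1+o(1)$), giving ratio bounds $c$ and $\rho_n^2/c$, not $\rho_n$.

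The paper avoids the sandwich entirely and is both shorter and correct in structure: it observes that $m_{k,i}(A_n)$ is the sum of the entries of rows $1,\dots,i$ of $(M_n)^{k-1}$, and then invokes Perron--Frobenius to say that \emph{every} row sum of $(M_n)^{k}$ grows like $\rho_n^k$. That makes the reduction to $i=1$ unnecessary: the same row-wise asymptotic you set up for $s_1$ applies to each $s_j$, and summing over $j\le i$ gives $m_{k,i}(A_n)\sim\bigl(\sum_{j\le i}r(s_j)\bigr)\bigl(\sum_q\ell(q)\bigr)\rho_n^{k-1}$, with positive leading constant as soon as some $r(s_j)>0$. So the useful part of your plan is the Perron--Frobenius row asymptotics and the positivity of $r$ on the relevant states; the squeeze should be dropped in favour of this. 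Note also that $\ell(s_1)>1/32$ (from the cited corollary) witnesses reachability \emph{from} an essential class to $s_1$, whereas $r(s_1)>0$ requires reachability from $s_1$ \emph{to} an essential class; your proposed bridge via a uniqueness statement about essential SCCs of the automaton is plausible but would need to be extracted explicitly from the description in the reference, and the paper does not supply this either (it simply asserts the uniform growth of rows). So with the squeeze removed, your route and the paper's converge, modulo the same unproved structural fact about $M_n$.
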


\begin{proof}
The result is trivial if $i=n+1$ or if $i=n$, as these columns contain precisely the number of braids of given length.

By definition, the number $m_{k,i}(A_n)$ is the number of lex-representatives in $A_n$ starting with a generator from $\{a_1,\ldots,a_i\}$. If we consider the incidence matrix $M_n$, and assume that the rows $1,2,\ldots,n$ correspond to the states of the lex-representatives $a_1,a_2,\ldots,a_n$ respectively, then $m_{k,i}(A_n)$ is the sum of the entries of the rows $1,2,\ldots,i$ in the matrix $(M_n)^{k-1}$.

By Perron-Frobenius theory, all rows of $(M_n)^k$ have the same growth rate as $k$ tends to infinity, which is precisely the Perron-Frobenius eigenvalue. In this case, this eigenvalue is precisely $\rho_n$ (the growth rate of the sum of the first $n$ rows). Therefore, all sequences $(m_{k,i}(A_n))_{k\geq 0}$ grow like $(\rho_n)^k$, as we wanted to show.
\end{proof}

Now we will relate the numbers $\rho_n$ to the constant $q_{\infty}$. Recall from~\cite{Juge} that ${\rho_n}$ is an increasing sequence of real numbers, whose limit we denote $\rho$:
$$
  \lim_{n\to\infty}{\rho_n}=\rho.
$$
We need the following:

\begin{definition}\label{D:c(n,mu)}
Given a real number $\mu\geq \rho$, let $\displaystyle c(n,\mu)=\sum_{k=0}^{\infty}{\left(\frac{\rho_n}{\mu}\right)^k}= \frac{1}{1-{\frac{\rho_n}{\mu}}}= \frac{\mu}{\mu-\rho_n}$.
\end{definition}

\begin{lemma}\label{L:limite}
Given $\mu\geq \rho$, one has $\displaystyle \lim_{n\to\infty}{c(n,\mu)}=\infty$ if and only if $\mu=\rho$.
\end{lemma}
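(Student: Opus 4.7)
The plan is to handle the two cases directly from the explicit formula $c(n,\mu)=\mu/(\mu-\rho_n)$, using only the fact, recalled from \cite{Juge}, that $\{\rho_n\}_{n\geq 1}$ is nondecreasing with limit $\rho$, so in particular $\rho_n\leq \rho\leq \mu$ for every $n$ and every $\mu\geq \rho$.

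First I would treat the ``only if'' direction by assuming $\mu>\rho$. Then for all $n$ one has $\mu-\rho_n\geq \mu-\rho>0$, so the sequence $c(n,\mu)$ is bounded above by $\mu/(\mu-\rho)$. More precisely, since $\rho_n\to \rho$ one obtains
$$
\lim_{n\to\infty} c(n,\mu)=\lim_{n\to\infty}\frac{\mu}{\mu-\rho_n}=\frac{\mu}{\mu-\rho},
$$
which is a finite real number. Hence $\lim_{n\to\infty} c(n,\mu)\neq \infty$, contradicting the hypothesis, and forcing $\mu=\rho$.

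For the ``if'' direction, suppose $\mu=\rho$. Then $c(n,\rho)=\rho/(\rho-\rho_n)$. Because $\rho_n<\rho$ for every $n$ (the sequence is strictly increasing toward $\rho$, as the monoids $A_n$ embed properly and acquire new elements) and $\rho_n\to \rho$, the denominator $\rho-\rho_n$ is positive and tends to $0$, while the numerator is the positive constant $\rho$. Therefore $c(n,\rho)\to +\infty$, finishing the proof.

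The statement is essentially a direct observation about the rational function $\mu\mapsto \mu/(\mu-x)$ evaluated along the convergent sequence $x=\rho_n$, so there is no real obstacle; the only minor point to be careful about is to justify that $\rho_n<\rho$ strictly (to guarantee that $c(n,\rho)$ is well defined for every $n$), which is immediate from the monotone convergence of $\rho_n$ to $\rho$ together with the strict inclusions $A_n\subsetneq A_{n+1}$ producing strictly more elements of each given length.
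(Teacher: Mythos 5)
Your proof is correct and takes essentially the same route as the paper, which simply declares the lemma to follow ``trivially'' from the explicit closed form $c(n,\mu)=\mu/(\mu-\rho_n)$; you have merely spelled out the two directions. One small caution: your parenthetical justification that $\rho_n<\rho$ strictly ``because $A_n\subsetneq A_{n+1}$ produces strictly more elements of each given length'' is not by itself a valid inference (a strictly larger count at every length need not force a strictly larger exponential growth rate); the strict inequality $\rho_n<\rho$ is true and is in any case tacitly required already by the definition of $c(n,\mu)$, so it is better treated as part of the standing hypotheses rather than rederived from that heuristic.
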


\begin{proof}
It follows trivially from \autoref{D:c(n,mu)}.
\end{proof}

Now let us denote by $\mathbf e$ the (column) vector (of any desired length) which consists only of 1's. Hence, from the definition of the Perron-Frobenius eigenvector $\mathbf{v_n}$, we have $\mathbf{v_n}\cdot \mathbf e = 1$.

\begin{lemma}\label{L:c(n,mu)}
For every $\mu\geq \rho$ and every $n\geq 2$, we have:
$$
   c(n,\mu) = \mathbf{v_n} (I-\mu^{-1}M_n)^{-1} \mathbf e
$$
\end{lemma}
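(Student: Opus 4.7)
The plan is to expand $(I-\mu^{-1}M_n)^{-1}$ as a Neumann series and then exploit the fact that $\mathbf{v_n}$ is a left Perron--Frobenius eigenvector of $M_n$. Recall from the discussion preceding the lemma that the eigenvalue attached to $\mathbf{v_n}$ is precisely the spectral radius of $M_n$, which coincides with the growth rate $\rho_n$ of $A_n$; thus $\mathbf{v_n}M_n=\rho_n\mathbf{v_n}$, and by induction $\mathbf{v_n}M_n^k=\rho_n^k\mathbf{v_n}$ for every $k\ge 0$.

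First I would justify the Neumann expansion. By the Jug\'e result cited in the text, $(\rho_n)_{n\ge 1}$ is strictly increasing with limit $\rho$, so the hypothesis $\mu\ge \rho$ gives $\rho_n<\mu$, i.e.\ the spectral radius of $\mu^{-1}M_n$ is $\rho_n/\mu<1$. Hence $I-\mu^{-1}M_n$ is invertible and
\[
(I-\mu^{-1}M_n)^{-1}=\sum_{k=0}^{\infty}\mu^{-k}M_n^k.
\]

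Next I would multiply this identity on the left by $\mathbf{v_n}$ and on the right by $\mathbf{e}$. Using $\mathbf{v_n}M_n^k=\rho_n^k\mathbf{v_n}$ and the normalization $\mathbf{v_n}\mathbf{e}=1$, one obtains
\[
\mathbf{v_n}(I-\mu^{-1}M_n)^{-1}\mathbf{e}
=\sum_{k=0}^{\infty}\mu^{-k}(\mathbf{v_n}M_n^k)\mathbf{e}
=\sum_{k=0}^{\infty}\left(\frac{\rho_n}{\mu}\right)^{k}\mathbf{v_n}\mathbf{e}
=\sum_{k=0}^{\infty}\left(\frac{\rho_n}{\mu}\right)^{k},
\]
and the last sum is $c(n,\mu)$ by \autoref{D:c(n,mu)}. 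There is no serious obstacle: the only point requiring care is the convergence of the Neumann series, which is secured by the strict inequality $\rho_n<\rho\le\mu$ that follows from the strict monotonicity of $(\rho_n)$.
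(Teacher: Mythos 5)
Your proof is correct and follows essentially the same route as the paper: a Neumann series expansion of $(I-\mu^{-1}M_n)^{-1}$, followed by applying the left Perron--Frobenius eigenvector relation $\mathbf{v_n}M_n^k=\rho_n^k\mathbf{v_n}$ and the normalization $\mathbf{v_n}\mathbf{e}=1$ to collapse the sum to a geometric series equal to $c(n,\mu)$. The only cosmetic difference is that the paper cites \cite[Lemma 5.2]{FlGo18} for the validity of the Neumann expansion, whereas you justify it directly via the spectral-radius inequality $\rho_n/\mu<1$.
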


\begin{proof}
We know~\cite[Lemma 5.2]{FlGo18} that
$$
   (I-\mu^{-1}M_n)^{-1} = I + (\mu^{-1}M_n) + (\mu^{-1}M_n)^2 + (\mu^{-1}M_n)^3 +\cdots
$$
If we multiply any summand by $\mathbf{v_n}$ from the left, and by $\mathbf e$ from the right, we obtain
$$
   \mathbf{v_n} (\mu^{-1}M_n)^k \mathbf e = \frac{1}{\mu^k}\left(\mathbf{v_n}M_n^k\right) \mathbf e = \frac{1}{\mu^k} \left(\rho_n^k \mathbf{v_n}\right)\mathbf e = \left(\frac{\rho_n}{\mu}\right)^k.
$$
Therefore
$$
  \mathbf{v_n}(I-\mu^{-1}M_n)^{-1} \mathbf e= 1+\left(\frac{\rho_n}{\mu}\right) + \left(\frac{\rho_n}{\mu}\right)^2 +\left(\frac{\rho_n}{\mu}\right)^3 +\cdots = c(n,\mu).
$$
\end{proof}

Now recall that the number $m_{k,1}(A_n)$ is equal to the sum of the entries of the first row of $(M_n)^{k-1}$. That is, $m_{k,1}(A_n)$ is the first entry of the column vector $(M_n)^{k-1}\mathbf e$. Recall also that the sequence $\{m_{n,1}(A_n)\}_{n\geq 0}$ is equal to $\{m_{n,1}(A_{\infty})\}_{n\geq 0}=\{L_n\}_{n\geq 0}= \{1,1,2,4,9,\ldots\}$. By \autoref{T:Sokal}, the growth rate of $\{m_{n,1}(A_n)\}_{n\geq 0}$ is equal to the constant $q_{\infty}$.

Notice that, for every $n\geq 1$ and every $k\geq 0$, we have $m_{k,1}(A_n)\leq m_{k,1}(A_{\infty})$. Hence, the sequence $\{m_{k,1}(A_n)\}_{k\geq 0}$ is dominated by the sequence $\{m_{k,1}(A_{\infty})\}_{k\geq 0}$. It follows that the growth rate of the former sequence cannot be bigger than the growth rate of the latter. In other words: $\rho_n\leq q_{\infty}$ for every $n\geq 1$. Therefore,
$$
    \rho=\lim_{n\to\infty}{\rho_n}\leq q_{\infty},
$$
and we can consider the numbers $c(n,q_\infty)$.

\begin{proposition}\label{P:limit_of_c(n,q)}
We have:
$$
   \lim_{n\to\infty}{c(n,q_{\infty})}=\infty.
$$
\end{proposition}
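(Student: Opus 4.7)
The plan is to use the matrix identity of \autoref{L:c(n,mu)} to express $c(n,q_\infty)$ and then to bound the result from below by a partial sum that is forced to diverge. We may assume $\rho_n<q_\infty$ for every $n$, since otherwise $\rho_n=q_\infty$ for some (hence, by monotonicity of $\{\rho_n\}$ and the bound $\rho_n\leq q_\infty$, all larger) $n$, and then $c(n,q_\infty)=+\infty$ directly from \autoref{D:c(n,mu)}. Under this assumption the spectral radius of $q_\infty^{-1}M_n$ is $\rho_n/q_\infty<1$, so the Neumann series $(I-q_\infty^{-1}M_n)^{-1}=\sum_{k\geq 0}q_\infty^{-k}M_n^k$ converges entrywise and its entries are non-negative. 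Consequently the column vector $\mathbf w=(I-q_\infty^{-1}M_n)^{-1}\mathbf{e}$ has non-negative entries, and using \autoref{T:1/32} to bound the first coordinate of $\mathbf{v_n}$ below by $1/32$ I obtain
$$
c(n,q_\infty)\;=\;\mathbf{v_n}\,\mathbf w\;\geq\;[\mathbf{v_n}]_1\, w_1\;\geq\;\frac{w_1}{32}.
$$

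Next I would identify $w_1$ explicitly. Since $[M_n^k\mathbf{e}]_1$ counts the length-$(k+1)$ lex-representatives of $A_n$ whose first letter is $a_1$, we have $[M_n^k\mathbf{e}]_1=m_{k+1,1}(A_n)$, and therefore
$$
w_1\;=\;\sum_{k\geq 0}q_\infty^{-k}\,m_{k+1,1}(A_n)\;=\;q_\infty\sum_{k\geq 1}\frac{m_{k,1}(A_n)}{q_\infty^{\,k}}.
$$
By \autoref{P:stabilized}, $m_{k,1}(A_n)=L_k$ whenever $k\leq n$, so discarding the non-negative tail $k>n$ yields the key lower bound
$$
c(n,q_\infty)\;\geq\;\frac{q_\infty}{32}\sum_{k=1}^{n}\frac{L_k}{q_\infty^{\,k}}.
$$

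To conclude, I would invoke \autoref{T:Sokal}, and specifically the consequence displayed in (\ref{E:proportion}): the quotient $L_k/q_\infty^{\,k}$ tends to a strictly positive constant as $k\to\infty$, so its terms are bounded away from zero for all large $k$ and hence the series $\sum_{k\geq 1}L_k/q_\infty^{\,k}$ diverges. Letting $n\to\infty$ in the previous inequality therefore forces $c(n,q_\infty)\to\infty$, as required. The main subtlety throughout is the book-keeping between the spectral-radius hypothesis $\rho_n<q_\infty$ and the non-negative-matrix manipulations, so that the Neumann series can be expanded termwise and individual coordinates of positive vectors can be used for the lower bound; all the other ingredients reduce to invoking results already established in earlier sections, and the overall argument is clean provided one keeps track of which entries are manifestly non-negative.
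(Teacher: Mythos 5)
Your proof is correct and follows essentially the same route as the paper: express $c(n,q_\infty)$ via the matrix identity of \autoref{L:c(n,mu)}, drop all coordinates except the first using the $1/32$ bound from \autoref{T:1/32}, recognize the remaining quantity as $\sum_k m_{k,1}(A_n)/q_\infty^k$, truncate at $k=n$ using \autoref{P:stabilized} to replace $m_{k,1}(A_n)$ by $L_k$, and invoke Sokal's growth-rate result to force divergence. Your only substantive addition is the explicit treatment of the case $\rho_n=q_\infty$ (so that the Neumann series actually converges when $\rho_n<q_\infty$); the paper leaves that implicit, and your remark is a small but welcome clarification, not a different argument.
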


\begin{proof}
We know from \autoref{L:c(n,mu)} that
$$
c(n,q_{\infty})= \mathbf{v_n}I\mathbf e + \mathbf{v_n}(q_{\infty}^{-1}M_n)\mathbf e + \mathbf{v_n}(q_{\infty}^{-1}M_n)^2\mathbf e + \mathbf{v_n}(q_{\infty}^{-1}M_n)^3\mathbf e +\cdots
$$
That is,
$$
c(n,q_{\infty})= 1+ \frac{\mathbf{v_n}M_n\mathbf e}{q_{\infty}} + \frac{\mathbf{v_n} M_n^2\mathbf e}{q_{\infty}^2} + \frac{\mathbf{v_n} M_n^3\mathbf e}{q_{\infty}^3}+\cdots
$$
We now recall from \autoref{T:1/32} that the first coordinate of $\mathbf{v_n}$ is greater than $\frac{1}{32}$. Therefore, if we denote $\mathbf e_1$ the first row of the identity matrix, we have, for every $k>0$:
$$
   \mathbf{v_n} M_n^k\mathbf e > \frac{1}{32} \left(\mathbf e_1 M_n^k \mathbf e \right) = \frac{m_{k+1,1}(A_n)}{32}.
$$
The above inequality holds since all the coordinates of vectors and matrices involved are nonnegative. Recall also that $m_{1,1}(A_n)=1$, so $1>\frac{1}{32}=\frac{m_{1,1}(A_n)}{32}$.

Finally, we obtain:
$$
c(n,q_{\infty})> \frac{m_{1,1}(A_n)}{32}  + \frac{m_{2,1}(A_n)}{32\: q_{\infty}} + \frac{m_{3,1}(A_n)}{32 \:q_{\infty}^2} + \frac{m_{4,1}(A_n)}{32 \:q_{\infty}^3} +\cdots
$$

Now recall that for every $k\leq n$, we have $m_{k,1}(A_n)=m_{k,1}(A_k)$. Also, $q_\infty>3$, so we can divide the above expression by $q_{\infty}$ and truncate at the $n$th term, to obtain:
$$
c(n,q_{\infty})> \frac{m_{1,1}(A_1)}{32\: q_{\infty}}  + \frac{m_{2,1}(A_2)}{32\: q_{\infty}^2} + \frac{m_{3,1}(A_3)}{32 \:q_{\infty}^3} +\cdots + \frac{m_{n,1}(A_n)}{32 \:q_{\infty}^n}.
$$
The right hand side is the truncation at $n$ of the infinite sum:
$$
  \frac{1}{32}\sum_{k=1}^{\infty}{\frac{m_{k,1}(A_k)}{q_{\infty}^k}}
$$
Since the numerators correspond to the sequence $\{L_k\}_{k\geq 0}$ whose growth rate is $q_{\infty}$, it follows that the fractions tends to a positive real number, so the above infinite sum does not converge. In other words, its truncations tend to infinity, and this implies that $c(n,q_{\infty})$ also tends to infinity, as $n$ grows.
\end{proof}

We can finally show the main result of this section.

\begin{theorem}\label{T:growth_limit}
Let $\displaystyle \rho=\lim_{n\to \infty}{\rho_{A_n}}$. Then $\rho=3.23363\ldots $ is the growth rate of the coefficients of $\xi_0(y)$. That is, $\rho$ is equal to the KLV-constant $q_{\infty}$.
\end{theorem}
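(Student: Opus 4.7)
The plan is to combine \autoref{L:limite} and \autoref{P:limit_of_c(n,q)}, both of which are already in hand, and conclude almost immediately. First I would record the easy inequality $\rho \leq q_\infty$: this follows from the domination $m_{k,1}(A_n) \leq m_{k,1}(A_\infty) = L_k$ for every $n$ and $k$, which gives $\rho_n \leq q_\infty$ for all $n$, hence $\rho = \lim_{n\to\infty}\rho_n \leq q_\infty$. This inequality is what legitimizes the use of $c(n,q_\infty)$, since the quantity $c(n,\mu) = \mu/(\mu - \rho_n)$ is only defined (and only meaningful as a sum of a convergent geometric series) under the hypothesis $\mu \geq \rho$.

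Next, I would invoke \autoref{P:limit_of_c(n,q)}, which asserts $\lim_{n\to\infty} c(n, q_\infty) = \infty$. The crux is then to read this fact backwards through \autoref{L:limite}: for any admissible $\mu \geq \rho$, the equivalence
\[
  \lim_{n\to\infty} c(n,\mu) = \infty \iff \mu = \rho
\]
holds. Applying this with $\mu = q_\infty$ (justified by the first paragraph), the divergence $c(n,q_\infty) \to \infty$ forces $q_\infty = \rho$, which is exactly the statement of the theorem.

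I do not anticipate any obstacle of substance here, since all the technical content has already been absorbed by \autoref{T:1/32} (the Perron--Frobenius lower bound $\frac{1}{32}$ on the first coordinate of $\mathbf{v_n}$), by \autoref{P:coefficients} and \autoref{T:Sokal_sequence} (identifying $m_{k,1}(A_\infty)$ with the Sokal coefficients $L_k$), and by Sokal's growth rate result \autoref{T:Sokal} (giving $\lim \sqrt[k]{L_k} = q_\infty$). These ingredients were fed into \autoref{P:limit_of_c(n,q)} via the truncation argument that bounds $c(n,q_\infty)$ below by $\frac{1}{32}\sum_{k=1}^{n} L_k/q_\infty^k$, a series which diverges precisely because the summand does not decay. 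The whole argument of this final theorem is therefore just the one-line extraction of the conclusion from the pair of supporting lemmas, together with the easy bound $\rho \leq q_\infty$ needed to enter \autoref{L:limite}.
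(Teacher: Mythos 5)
Your proposal matches the paper's proof exactly: the paper also first records $\rho_n\leq q_\infty$ (via the domination $m_{k,1}(A_n)\leq m_{k,1}(A_\infty)=L_k$ and the growth-rate result of \autoref{T:Sokal}) to justify writing $c(n,q_\infty)$, then combines \autoref{P:limit_of_c(n,q)} with \autoref{L:limite} to conclude $q_\infty=\rho$. The only cosmetic difference is that the paper places the $\rho\leq q_\infty$ observation in the text preceding the theorem rather than inside the proof body.
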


\begin{proof}
By \autoref{P:limit_of_c(n,q)}, we have that $\lim_{n\to\infty}{c(n,q_{\infty})}=\infty$. By \autoref{L:limite}, this can only happen if $q_{\infty}=\rho$.
\end{proof}

We finish this paper with a question concerning the remaining Artin--Tits monoids:

{\bf Question:} {\it Is it true that $\rho(A_n)= \rho(B_n)= \rho(D_n)= q_{\infty} $?}

Using injective maps $A_n\rightarrow B_{n+1}$ and $A_n\rightarrow D_{n+1}$, which send $a_i$ to $a_i$, it is easy to prove that $\rho(B_n)\leq q_{\infty}$; however, our methods are unable to state the opposite inequalities.



{\bf Ram\'on Flores.}\\
ramonjflores@us.es\\
Depto. de Geometr\'{\i}a y Topolog\'{\i}a. Instituto de Matem\'aticas (IMUS). \\
Universidad de Sevilla.  Av. Reina Mercedes s/n, 41012 Sevilla (Spain).

\medskip

{\bf Juan Gonz\'alez-Meneses.}\\
meneses@us.es\\
Depto. de \'Algebra. Instituto de Matem\'aticas (IMUS). \\
Universidad de Sevilla.  Av. Reina Mercedes s/n, 41012 Sevilla (Spain).

\end{document}